\newtheorem{theorem}{Theorem}[section]
\newtheorem{corollary}{Corollary}[section]
\numberwithin{equation}{section}
\newtheorem{lem}{Lemma}[section]
\newtheorem{prop}{Proposition}[section]
\newtheorem{remk}{Remark}[section]
\newtheorem{definition}{Definition}[section]
\def\beq{\begin{equation}}
\def\eeq{\end{equation}}
\def\XXint#1#2#3{{\setbox0=\hbox{$#1{#2#3}{\int}$}
\vcenter{\hbox{$#2#3$}}\kern-.5\wd0}}
\def\strutdepth{\dp\strutbox}
\def \nw#1{\strut\vadjust{\kern-\strutdepth\vtop to0pt{\vss\hbox to\hsize {\hskip\hsize\hskip5pt$\leftarrow$\hss\strut}}}{\em \textcolor{blue}{#1}}}
\def\strutdepth{\dp\strutbox}
\def \AT#1{\strut\vadjust{\kern-\strutdepth\vtop to0pt{\vss\hbox to\hsize {\hskip\hsize\hskip5pt$\leftarrow$\hss\strut}}}{\em \textcolor{red}{#1}}}
\def\strutdepth{\dp\strutbox}
\def \BVR#1{\strut\vadjust{\kern-\strutdepth\vtop to0pt{\vss\hbox to\hsize {\hskip\hsize\hskip5pt$\leftarrow$\hss\strut}}}{\em \textcolor{orange}{#1}}}
\DeclareMathOperator\erfc{erfc}
\newenvironment{varsubequations}[1]
 {%
  \addtocounter{equation}{-1}%
  \begin{subequations}
  \def\@currentlabel{#1}%
 }
 {%
  \end{subequations}\ignorespacesafterend
 }
\pgfplotsset{compat=1.17}
\begin{document}


\title[The role of mutants in thespatio-temporal progression of inflammatory bowel disease: three classes of permanent form travelling waves]{The role of mutants in the spatio-temporal progression of inflammatory bowel disease: three classes of permanent form travelling waves}
\date{\today}
\author{\fnm{Blaine} \sur{Van Rensburg}}\email{\href{mailto:bxv114@student.bham.ac.uk}{bxv114@student.bham.ac.uk}}

\author{\fnm{David J.} \sur{Needham}}\email{\href{mailto:d.j.needham@bham.ac.uk}{d.j.needham@bham.ac.uk}}

\author{\fnm{Fabian} \sur{Spill}}\email{\href{mailto:f.spill@bham.ac.uk}
{f.spill@bham.ac.uk}}

\author{\fnm{Alexandra} \sur{Tzella}}\email{\href{mailto:a.tzella@bham.ac.uk}{a.tzella@bham.ac.uk}}

\affil{\orgdiv{School of Mathematics}, \orgname{University of Birmingham}, \orgaddress{ \city{Birmingham}, \postcode{B15 2TT}, \country{UK}}}

\abstract{Despite its high prevalence and impact on the lives of those affected, a complete understanding of the cause of inflammatory bowel disease (IBD) is lacking.
In this paper, we investigate a novel mechanism which proposes that mutant epithelial cells are significant to the progression of IBD since they promote inflammation and are resistant to death.
We develop a simple model encapsulating the propagation of mutant epithelial cells and immune cells which results from interactions with the intestinal barrier and 
bacteria. 
Motivated by the slow growth of mutant epithelial cells, and relatively slow response rate of the adaptive immune system, we combine   geometric singular perturbation theory with matched asymptotic expansions to determine the one-dimensional slow invariant manifold that characterises the leading order dynamics at all times beyond a passive initial adjustment phase. The dynamics on this manifold are controlled by a bifurcation parameter 
which depends on the ratio of {growth to decay rates} of all components except mutants and determines 
three distinct classes of  
  permanent-form travelling waves that describe the propagation of mutant epithelial and immune cells.  These are  obtained from scalar reaction-diffusion equations 
 with the reaction being  (i) a bistable nonlinearity  with a cut-off, (ii) a cubic Fisher nonlinearity and (iii)     a KPP or Fisher  nonlinearity.    
Our results suggest that 
mutant epithelial cells
are critical to the progression of IBD.  However, their effect on the speed of progression is subdominant.}

\keywords{mutant-mediated progression of disease, reaction–diffusion equations, geometric singular perturbation theory, matched asymptotics, travelling waves}



\maketitle

\section{Introduction}\label{sec_intro}
 Inflammatory bowel disease (IBD) is a chronic inflammatory condition
 which comes in two main forms termed Crohn's disease and the ulcerlative collitis. IBD has a high prevalence (of 0.3\% in Western countries \cite{NSHU})  and significantly impacts the quality of life of those affected \cite{knowles2018quality}. Moreover, the global incidence of IBD is rising, placing an increased burden on healthcare systems \cite{windsor2019evolving}. There is therefore a pressing need to better understand the mechanisms that drive  IBD and thus develop improved, more targeted treatments.
However, progress is hindered because complex 
spatio-temporal cues, such as those generated by 
chemical signalling molecules like cytokines \cite{altan2019cytokine}, 
control the response of immune cells and their interactions with 
  resident bacteria and the intestinal barrier. 
These interactions can be highly nonlinear and strongly influenced by genetic factors 
\cite{zhang2014inflammatory}, with relevant genes affecting the function of epithelial cells which constitute the central layer of the intestinal barrier \cite{mccole2014ibd}. Their combined effect on the generation and progression of IBD remains poorly understood.

 A growing number of mathematical and computational models have been proposed to elucidate the most relevant interactions of immune \cite{wendelsdorf2010model,lo2013mathematical,lo2016inflammatory} and epithelial cells \cite{wendelsdorf2010model} to the progression of IBD and make experimentally testable predictions to inform targeted treatments. 
However, these models do not include the effect of mutant epithelial cells.

 Recent experimental results suggest that these cells can play a significant role in the generation and progression of IBD \cite{OA}. Their presence can affect the normal function of the intestinal barrier,   permitting resident bacteria in the colon to cross and cause an immune cell response leading to inflammation. In the presence of inflammation, certain mutations have been shown to confer a selective advantage over wild-type epithelial cells, which potentially results in an initially isolated patch of mutant epithelial cells spreading along the intestinal barrier \cite{nanki2020somatic} which can lead to IBD.
The role of mutant epithelial cells on the generation and progression of IBD
cannot be captured by 
the current type of mathematical and computational models which assume that their components are spatially homogeneous.

\subsection{Model}
In this paper, we develop a simple system of reaction-diffusion equations to mechanistically model the {spatio-temporal dynamics} 
of the progression of mutant epithelial cells 
as they interact with the intestinal barrier, immune cells and bacteria. %
The model takes the form 
\begin{subequations}\label{eqn:MainSystem}
\begin{align}
\partial_{t}M&=D_{M}\partial_{xx}M+\gamma{}IM\left(K_{M}-M\right)\label{eqn:Mutant},\\
    \partial_{t}I&=D_{I}\partial_{xx}I+\alpha_{1}B\left(1-I/K_I\right)-\beta_{1}I\label{eqn:Immune},\\
    \partial_{t}\rho&=D_\rho\partial_{xx}\rho+\alpha_{2}I\left(1-\rho\right)-\beta_{2}\left(K_M-M\right)\rho\label{eqn:Barrier},\\
\partial_{t}B&=D_{B}\partial_{xx}B+r_{B}\rho-k_{B}{B}, 
\label{eqn:Bacteria}
\end{align}
\end{subequations}
where the dependent variables $M(x,t)$, $I(x,t)$, $B(x,t)$ are, respectively, the local spatial densities of mutant epithelial cells, immune cells, and bacteria at position $x$ at time $t$ with $(x,t)\in  \mathbb{R}\times\mathbb{R}^+$. 
The remaining dependent variable $\rho(x,t)$  
represents the permeability of the intestinal barrier; $\rho(x,t)=1$  corresponds to maximum  permeability, and $\rho(x,t)=0$ corresponding to an impermeable barrier.  
The first equation models mutant epithelial cells out-competing wild-type epithelial cells  in the presence of immune cells. We assume that the total density of epithelial cells (i.e the sum of mutant and wild-type cell densities) is constant and the overall rate of this interaction is provided by the law of mass action with reaction rate $\gamma$. This the reaction term is of the same form as the standard Fisher-KPP reaction, where the selective advantage of mutant epithelial cells, represented by $\gamma{}I$, is due to inflammation.
The second equation models the change in the immune cell density. We assume 
that
immune cells are recruited by bacteria at rate $\alpha_1$ and otherwise, decay linearly at rate $\beta_1$ due to death or migration away from the intestine. 
  A carrying capacity $K_I$  accounts for saturation in recruitment.
The third equation assumes that the permeability of the intestinal barrier increases proportionally to the density of immune cells since excess inflammation is known to be damaging \cite{thoo2019keep}.
At the same time, the permeability of the intestinal barrier decreases in the presence of wild-type epithelial cells which secrete antibodies that suppress the transfer of bacteria across the intestinal barrier \cite{johansen1999absence,mantis2011secretory}. The corresponding growth and decay rates are, respectively,  $\alpha_2$ and $\beta_2$.
Finally, 
the  source for bacteria 
is proportional to  the permeability of the intestinal barrier at rate $r_B$ and we assume that bacteria  decay linearly at rate  $k_B$ due to clearance by other immune cells, such as macrophages, which we do not explicitly model \cite{bain2014macrophages}. 
We assume that mutant epithelial cells, immune cells, and bacteria diffuse with diffusivities given by $D_M$, $D_I$, $D_B$ respectively.  For simplicity, we also assume  
 that the permeability of the intestinal barrier undergoes diffusion.
As we see in the theory that unfolds in the paper this diffusivity parameter can in effect be set to zero 
but is here included for generality.

  As initial conditions we take $M(x,0)=M_0 H(-x)$
  and 
  $I(x,0)=I_0 H(-x)$ where $H$ corresponds  to the Heaviside function
  and 
  assume that  the permeability of the intestinal barrier and the density of bacteria initially satisfy 
 $(\rho(x,0),B(x,0))\in(0,1)\times(0,r_B/k_B)$.
 Thus we assume that initially there are no mutant epithelial cells or immune cells beyond $x=0$.

\subsection{Summary of results and outline}
The slow growth of  mutant epithelial cells \cite{olpe2021diffusion}  and relatively slow response of the adaptive immune system \cite{janeway2001immunobiology} motivates the use of geometric singular perturbation theory followed by matched asymptotic expansions which we use to deduce that beyond a short and passive induction period, the spatio-temporal dynamics of mutant epithelial and immune cells  are attracted by a one-dimensional  slow invariant manifold. We show that on this invariant manifold  the reduced evolutionary system supports the development of travelling waves provided that the initial density of mutants $M_0$ exceeds a threshold value determined as $\text{max}(K_M(1-\sigma),0)$ . This value decreases with the key parameter $\sigma$, where  
\begin{equation}\label{eqn:SigmaExpression}
\sigma=\frac{\alpha_1\,\alpha_2\,r_B}{\beta_1^{}\,\beta_2^{}k_{B}^{}\,K_M^{}}
\end{equation} represents the ratio of growth to decay rates of the system's components except mutant epithelial cells, and is scaled by the constant total epithelial cell density.

The {reduced} dynamics are governed by 
scalar reaction-diffusion equations with the reaction being (i) $0<\sigma<1$: a bistable nonlinearity that is effectively deactivated below a threshold value (see \cite{dumortier2010geometric,popovic2012geometric} for similar cut--off reactions), (ii) $\sigma=1$: a 
cubic Fisher nonlinearity  \cite{billingham1991note}, or (iii) $\sigma>1$: a classical Kolmogorov–Petrovskii–Piskunov (KPP) or Fisher 
nonlinearity \cite{fisher1937wave,kolmogorov1937etude}. 
These {parabolic evolution} equations support {(a possibly one-parameter family of)} non-negative permanent form travelling waves (PTWs) moving in the positive $x$ direction whose speed {(unique or minimum available)} $v$ is determined from a nonlinear boundary value problem. In case (iii) there are two boundary value problems where the first is independent of, and the second is dependent on, mutant epithelial cells, and each supports its own family of PTWs. The distinct difference between the three cases is that in case (i) the resulting propagation speed is unique, whereas in case (ii) and (iii) there is a continuum of speeds $[v_m,\infty)$, where $v_m$ denotes the minimum propagation speed. Furthermore, in case (iii) the ratio of the propagation speeds of the mutant epithelial cell independent PTW and the mutant epithelial cell dependent PTW is proportional to $1/\sqrt{\delta}$ where $\delta$ is a principal small parameter introduced in the next section. 
In each of cases (i), (ii), and (iii) the propagation speed is expressed  either explicitly or approximated in asymptotic limits, using existing expressions    \cite{billingham1991note,volpert1994traveling, LNBK}  or expressions  developed herein.

Our results suggest that mutant epithelial cells can significantly affect the progression of IBD, in the sense that the initial density of mutant epithelial cells $M_0$  must exceed the threshold value max$(K_M(1-\sigma),0)$ for the propagation of inflammation and progression of IBD to occur. This threshold is trivially exceeded when the ratio of growth rates to decay rates exceeds unity ($\sigma\geq{}1$). Therefore, our model suggests {that $\sigma$ is a key dimensionless parameter controlling disease propagation, and in particular,} for therapies to be successful they must reduce the value of $\sigma$. This can be achieved by targeting processes on which the parameters in the expression of $\sigma$ depend, such as immune cell recruitment, immune cell clearance, or that reduce damage done to the epithelial barrier.

The paper is organized as follows. 
In Section \ref{sec_model_reduction} we non-dimensionalise the model and identify two small dimensionless parameters that allow us to reduce its spatio-temporal dynamics to a two-dimensional slow invariant manifold {embedded in the four-dimensional phase space of dependent variables.} 
In Section 
\ref{sec_IVP} we  study  the initial value problem for the reduced two-dimensional model with front-like initial data
and determine necessary initial conditions for the activation of permanent form travelling wave solutions. In Section \ref{sec_PTW} we apply matched asymptotic expansions to examine in detail the structure of the permanent form travelling wave solutions which may emerge at long times. Technical details concerning the evolution of the initial value problem, and the preliminary results concerning the basic properties of the PTW solutions, are relegated to Appendix \ref{app_A} and Appendix \ref{app_B} respectively.
We ultimately reduce the dynamics to a one-dimensional slow invariant manifold, whose leading order structure we determine, and obtain asymptotic expressions for the propagation speeds, with explicit asymptotic expressions for limiting values of $\sigma$ provided in  Appendix \ref{app_C} and using existing expressions    \cite{billingham1991note,TNT,volpert1994traveling, LNBK}.  {In Section \ref{sec_numerics} we present numerical approximations obtained for each type of travelling wave that verify the theory developed in the preceding sections}. Finally,  we discuss and interpret our results in the concluding  Section
\ref{sec_conclusions}.

\section{Model Reduction}\label{sec_model_reduction}

In this section we first non-dimensionalise   model (\ref{eqn:MainSystem}) using suitable natural scales. This enables us to identify key dimensionless parameters in the model, and to estimate their magnitudes. To this end we introduce dimensionless variables as
\begin{align*}
&M^{*}=\frac{M}{K_{M}},~~I^{*}=\frac{I}{K_{I}},~~B^{*}=\frac{k_B}{r_B}B,~~
    x^{*}=\sqrt{\frac{\gamma{}K_MK_I}{D_M}}x,~~t^{*}=\gamma{}K_IK_M{t}.
\end{align*}
{This gives rise to six dimensionless parameters appearing in the dimensionless form of the model (\ref{eqn:MainSystem}), namely,
\begin{align*}
    {D}^{*}_{i}&=\frac{D_{i}}{D_{M}},~~   \beta_1^{*}=\frac{K_Ik_B}{\alpha_1r_B}\beta_1,~~\alpha^{*}_{2}=\frac{K_{I}}{k_B}\alpha_{2},~~ \beta^{*}_{2}=\frac{K_{M}}{k_B}\beta_{2},\\
    {\varepsilon}&=\frac{K_{M}K_{I}\gamma}{k_B},~~\delta=\frac{K_I}{\alpha_1{}r_B}\varepsilon.
\end{align*}
In what follows we omit the asterisks for notational convenience. The slow growth of mutant epithelial cells and the slow response time of the adaptive immune system relative to the time scales of the other processes determine first that $\epsilon$ is very small. In addition, in general we expect that $K_I(\alpha_1r_B)^{-1}$ will be large, but not too large, so that, naturally, we have $0<\epsilon \ll \delta \ll 1$. All other dimensionless parameters will be taken as being of $O(1)$ in magnitude.}
In dimensionless form, the final model equations can be written as the four dimensional reaction-diffusion system
\begin{varsubequations}{RDS}\label{eqn:RDS}
\begin{equation}
    \mathbf{U}_t = \mathbf{D}\mathbf{U}_{xx} + \mathbf{F}(\mathbf{U}),~~(x,t)\in D_{\infty}\equiv \mathbb{R}\times\mathbb{R}^+,\label{eqn:RDSa}
\end{equation}
with $x$ being the scalar spatial variable and $t$ being time. The dependent variables are $\mathbf{U}\equiv (M,I,\rho,B) \in \mathbb{R}^4$ whilst $\mathbf{F} \equiv (F_1,F_2,F_3,F_4)\in C^1(\mathbb{R}^4,\mathbb{R}^4)$ is the prescribed reaction function and $\mathbf{D}$ is the constant, diagonal and positive definite $4\times4$ diffusion matrix. We have,
    \begin{equation}
    \mathbf{D} = \mathbf{diag}[1,D,D_{\rho},D_B],\label{eqn:RDSb}
\end{equation}
with each entry strictly positive, whilst the reaction function has,
\begin{equation}
    F_1(M,I,\rho,B) = IM(1-M),\label{eqn:RDSc}
\end{equation}
\begin{equation}
    F_2(M,I,\rho,B) = \delta^{-1}(B - (B+\beta_1)I),\label{eqn:RDSd}
\end{equation}     
 \begin{equation}
    F_3(M,I,\rho,B) = \epsilon^{-1}(\alpha_2I - (\alpha_2I + \beta_2(1-M))\rho),\label{eqn:RDSe}
\end{equation}
\begin{equation}
    F_4(M,I,\rho,B) = \epsilon^{-1}( \rho - B),\label{eqn:RDSf}
\end{equation}
\end{varsubequations}
with $(M,I,\rho,B) \in \mathbb{R}^4$. We will henceforth refer to this strictly parabolic, semilinear evolutionary system as \eqref{eqn:RDS}. The model parameters $\beta_i (i=1,2)$, $\alpha_2$, $\delta$ and $\epsilon$ are all positive. A $\emph{global solution}$ to \eqref{eqn:RDS} is a function $\mathbf{U}:\overline{D}_{\infty} \to \mathbb{R}^4$ which has, $\emph{for each}$ $T>0$, $\mathbf{U} \in PC(\overline{D}_{T}) \cap C^{1,2}(D_T) \cap L^{\infty}(\overline{D}_T)$, with $D_T = \mathbb{R}\times (0,T]$. We now consider some preliminary results for \eqref{eqn:RDS}. First, it is straightforward to check that $\overline{Q}_R \subset \mathbb{R}^4$, where 
\begin{equation*}
    Q_R = \{(M,I,\rho,B)\in \mathbb{R}^4: (M,I,\rho,B)\in (0,1)^4\}, \label{eqn1.7}
\end{equation*}
is a positively invariant rectangle for \eqref{eqn:RDS}. Thus, with initial data 
\begin{equation*}
    \mathbf{U}(x,0) = \mathbf{U}_0(x),~~x\in \mathbb{R}, \label{eqn1.8} 
\end{equation*}
where $\mathbf{U}_0\in PC(\mathbb{R})\cap L^{\infty}(\mathbb{R})$, and 
\begin{equation}
    \mathbf{U}_0(x)\equiv (M_0(x),I_0(x),\rho_0(x),B_0(x))\in \overline{Q}_R~~\forall~~x\in \mathbb{R}, \label{eqn1.9}\tag{C}
\end{equation}
we can conclude, for any $T>0$, that any solution to \eqref{eqn:RDS} with this initial data must have,
\begin{equation}
    \mathbf{U}(x,t) \in \overline{Q}_R~~\forall~~(x,t)\in \overline{D}_T.  \label{eqn1.10}
\end{equation}

We will restrict attention throughout to initial data which satisfy condition (C).
In this situation, an immediate consequence of the $\emph{a priori}$ bounds provided in (\ref{eqn1.10}) is that the initial value problem for \eqref{eqn:RDS} has a global solution, on $\overline{D}_{\infty}$, which is unique, and  satisfies (\ref{eqn1.10}) (this follows from the classical uniqueness and global existence theory for regular, strictly parabolic, semilinear systems, see, for example,  Smoller \cite{Smoll}, chapter 14). 

We further restrict attention to the natural situation when $0<\epsilon\ll\delta\ll1$, with the object of determining qualitative and quantitative information concerning this global solution to \eqref{eqn:RDS}. We can formalise this by considering $\epsilon = o(\delta)$ in the limit $\delta\to 0$ in \eqref{eqn:RDS}, with all other parameters being formally of $O(1)$. In this limit we return to \eqref{eqn:RDS}, and take $\delta$ small, whilst allowing $\epsilon$ to become very much smaller than $\delta$, and denote $\mathbf{U}:\overline{D}_{\infty}\to \overline{Q}_R$ as the solution to the initial value problem. In this limit there is an asymptotic separation of time scales between the first two components (the slow components) and the last two components (the fast components) of \eqref{eqn:RDS} which can be readily exploited via the geometric singular perturbation theory. We first observe that,
\begin{equation*}
    (F_3)_{\rho}(M,I,\rho,B) = - \epsilon^{-1} (\alpha_2I + \beta_2(1-M)) < 0,  \quad
    (F_3)_B(M,I,\rho,B) = 0,
\end{equation*}
and
\begin{equation*}
    (F_4)_{\rho}(M,I,\rho,B) = \epsilon^{-1},
\quad
   (F_4)_B(M,I,\rho,B) = - \epsilon^{-1} <0, \label{eqn1.12} 
\end{equation*}
for all $(M,I,\rho,B)\in Q_R$. It then follows that, for $0<\epsilon\ll\delta\ll1$, \eqref{eqn:RDS} has a smooth two-dimensional, global and temporally stable, slow invariant
manifold in the invariant rectangle $\overline{Q}_R$ of the $(M,I,\rho,B)$-phase space, onto which the dymanics of \eqref{eqn:RDS} is (exponentially rapidly)  attracted on the fast time scale $t=O(\epsilon)$, and is thereafter constrained to remain on the slow time scale $t\ge O(1)$ (see, for example,  Jones \cite{arnold1995geometric}). A trivial direct calculation determines the slow invariant manifold as $\epsilon \to 0$, in the form,
\begin{equation}
    \mathcal{M}_i = \{(M,I,\rho,B)\in \overline{Q}_R:(M,I)\in [0,1]^2, \rho=B=B_i(M,I)\}, \label{eqn1.13}
\end{equation}
with,
\begin{equation*}
    B_i(M,I) = \frac{\alpha_2I}{(\alpha_2I + \beta_2(1-M))}.  \label{eqn1.14}
\end{equation*}
We see that the slow invariant manifold $\mathcal{M}_i$ is a $\emph{graph}$ in $\overline{Q}_R$, with independent coordinates $(M,I)\in [0,1]^2$. It should also be noted that the approximation to $B_i(M,I)$ fails in an $O(\epsilon)$ neighbourhood of the point $(M,I) = (1,0)$, and this can be regularised by the inclusion of a local inner region in the graph; however, we will see that this point and its local neighbourhood are not involved in the ensuing dynamics, and so we need not consider this further. 

We can now reduce the dimension of \eqref{eqn:RDS} to the associated two-dimensional system on the slow invariant manifold. On substituting for $A$ and $\rho$ from their graphs in (\ref{eqn1.13}) into the first two components of \eqref{eqn:RDS}, we obtain the two-dimensional reduced system as, in component form,
\begin{varsubequations}{LDS}\label{eqn:LDS}
   \begin{equation}
    M_t = M_{xx} + IM(1-M), \label{eqn1.15}\tag{LDSa}
\end{equation}
\begin{equation}
    I_t = DI_{xx} + \delta^{-1}f(M,I), \label{eqn1.16}\tag{LDSb}
\end{equation} 
\end{varsubequations}
for $(x,t)\in D_{\infty}$ with $(M,I)\in [0,1]^2$. Here
\begin{equation}
   f(M,I) = \frac{a(\sigma)I(b(\sigma)(\sigma-1) +b(\sigma)M - I)}{(\alpha_2I + \beta_2(1-M))}    \label{eqn1.17} 
\end{equation}
for $(M,I)\in [0,1]^2$.
It has been instructive to introduce the parameters,
\begin{equation}
    \sigma = \frac{\alpha_2}{\beta_1 \beta_2}, \quad 
    a(\sigma) = \beta_1(\beta_2\sigma + \alpha_2),
\quad
    b(\sigma) = \frac{\beta_2}{(\beta_2\sigma + \alpha_2)}.
\end{equation}
The initial conditions associated with \eqref{eqn:LDS} are, from (\ref{eqn1.9}),
\begin{equation*}
    (M(x,0),I(x,0)) = (M_0(x),I_0(x)),~~x\in \mathbb{R}. \label{eqn1.23}
\end{equation*}
As should be anticipated, $[0,1]^2$ is an invariant rectangle for \eqref{eqn:LDS} on the $(M,I)$-phase plane, whilst it may be further verified  directly that, in fact, for a given small $\delta$, the closed, convex clipped rectangle $R(\delta) \equiv [0,1]^2 \setminus N(\delta)$, with $N(\delta) = \{(M,I): (1-\delta)<M\le1 ~~\text{with}~~ 0\le I < M -(1-\delta)\}$ is also an invariant region for \eqref{eqn:LDS}, \emph{provided} $\sigma \ge \delta$, which we will take to be the case throughout the rest of the paper.. Without loss of generality, we will now work with \eqref{eqn:LDS} on $R(\delta)$, for which  the positive invariance of $R(\delta)$ guarantees that the initial value problem for \eqref{eqn:LDS} has a unique, global solution on $\overline{D}_{\infty}$, say $(M,I):\overline{D}_{\infty} \to R(\delta)$, and, by construction, this solution will approximate the solution to the corresponding initial value problem for \eqref{eqn:RDS}, when $\epsilon$ is small, in the following way: on the slow invariant manifold,
\begin{equation*}
    \left\Vert(M(\cdot,t),I(\cdot,t))|_{(RDS)} - (M(\cdot,t),I(\cdot,t))|_{(LDS)}\right\Vert_{\infty} = O(\epsilon)~~\text{as}~~\epsilon \to 0, \label{eqn1.24}
\end{equation*}
uniformly for $t\ge O(1)$. Here the subscripts \eqref{eqn:RDS} and \eqref{eqn:LDS} refer to solutions of the corresponding initial value problems for evolution systems \eqref{eqn:RDS} and \eqref{eqn:LDS}, respectively. In studying the reduced evolution system \eqref{eqn:LDS}, and its initial value problem, it will be instructive to regard the parameter $\delta$ as fixed and small, with the parameters $\alpha_i, \beta_i (i=1,2)$ and $D$ fixed and of $O(1)$, and $\sigma\ge \delta$ as a bifurcation parameter. Finally we remark that the above slow invariant manifold reduction of \eqref{eqn:RDS} to \eqref{eqn:LDS} when $\epsilon$ is small (and small relative to $\delta$) can be placed on a rigorous basis through the theory developed on geometric singular perturbation theory in, for example, Jones \cite{arnold1995geometric}.

\section{The Initial Value Problem (LIVP)}\label{sec_IVP}
In this section we examine preliminary results for the initial value problem for \eqref{eqn:LDS}, with the front-like initial data,
\begin{equation*}
    (M_0(x),I_0(x)) = (M_0H(-x),I_0H(-x)),~~x\in \mathbb{R}, \label{eqn3.1}
\end{equation*}
where $H(\cdot)$ is the usual Heaviside function, and $(M_0, I_0)\in R(\delta)$ are given positive constants. We will henceforth refer to this initial value problem as (LIVP). It follows from Section \ref{sec_model_reduction} that we immediately have:\\

\begin{theorem}
    Let $\delta$ be small and $\sigma\ge \delta$. Then for each positive $(M_0,I_0)\in R(\delta)$, (LIVP) has a unique, global, classical solution, $(M,I):\overline{D}_{\infty}\to R(\delta)$. Moreover, for each $T>0$,
    \begin{equation*}
    (M(x,t),I(x,t)) \to 
    \begin{cases}
        (0,0)~~\text{as}~~x\to \infty,\\
        (M_{-\infty}(t), I_{-\infty}(t))~~\text{as}~~x\to -\infty,
    \end{cases}
    \label{eqn3.2}
    \end{equation*}
    uniformly for $t\in [0,T]$, where $(M_{-\infty},I_{-\infty})\equiv (m,i):\overline{\mathbb{R}}^+\to R(\delta)$ is the unique global solution to the temporal evolution problem,
    \begin{equation}
        m' = im(1-m),~~i' = \delta^{-1} f(m,i)~~ \text{in}~~t>0, \label{eqn3.3}\tag{DS}
    \end{equation}
    subject to
    \begin{equation*}
        m(0)=M_0,~~i(0)=I_0. \label{eqn3.4}
    \end{equation*}
    In addition, $(M(x,t),I(x,t))\in (0,1)^2$ for all $(x,t)\in D_{\infty}$.
\end{theorem}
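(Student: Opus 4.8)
The plan is to establish the theorem in three stages: existence/uniqueness and invariance of the solution, the limiting behaviour as $x \to \pm\infty$, and the strict positivity in the interior. The first part is essentially inherited from the discussion in Section \ref{sec_model_reduction}: since $R(\delta)$ is a closed, convex, positively invariant region for \eqref{eqn:LDS} whenever $\sigma \ge \delta$, and since $(M_0 H(-x), I_0 H(-x))$ is a bounded, piecewise-continuous function taking values in $R(\delta)$ for every $x$, the classical global existence and uniqueness theory for regular, strictly parabolic, semilinear systems (Smoller \cite{Smoll}, Chapter 14) applies directly and yields the unique global classical solution $(M,I):\overline{D}_{\infty} \to R(\delta)$. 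I would state this and point to the cited references rather than reprove it.

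For the limit as $x \to -\infty$, I would first note that the spatially homogeneous solution of \eqref{eqn:LDS} with data $(M_0,I_0)$ is precisely the solution $(m,i)$ of the ODE system (DS); since the initial data are constant ($=(M_0,I_0)$) for all $x<0$, one expects the solution to "forget" the front and converge to this homogeneous state as $x\to-\infty$, and symmetrically to converge to the rest state $(0,0)$ as $x\to+\infty$ where the data vanish. The clean way to make this rigorous is a comparison/translation-invariance argument: the PDE is invariant under $x \mapsto x + c$, so for $c>0$ the function $(M,I)(x+c,t)$ solves the same system with shifted initial data; as $c\to-\infty$ the shifted data converge (in an appropriate monotone or dominated sense) to the constant data $(M_0,I_0)$, and a stability estimate for the parabolic system on the invariant region $R(\delta)$ forces convergence of the solutions, uniformly on $[0,T]$. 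An alternative is to sandwich $(M,I)(x,t)$ between the homogeneous solution and suitable sub/supersolutions built from the heat-kernel smoothing of the Heaviside data. Either way, the uniqueness and global existence of $(m,i)$ itself follows from the same invariant-region argument applied to the ODE (DS) on $R(\delta)$.

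The interior positivity $(M(x,t),I(x,t)) \in (0,1)^2$ for all $(x,t) \in D_\infty$ is where the strong maximum principle does the real work, and I expect the $I$-component to be the more delicate one. For $M$: along $M=0$ the reaction $F_1 = IM(1-M)$ vanishes, so $M \equiv 0$ is a solution; since the initial data for $M$ are not identically zero, the strong maximum principle applied to the linear parabolic operator $\partial_t - \partial_{xx} - I(1-M)$ (with bounded coefficients, using the a priori bounds) gives $M>0$ for $t>0$; similarly comparing with the constant $1$ gives $M<1$. For $I$: one checks from \eqref{eqn1.17} that $f(M,0) = a(\sigma) b(\sigma)(\sigma-1)\cdot 0 /(\cdots) = 0$, so $I \equiv 0$ solves \eqref{eqn1.16} and the same strong-maximum-principle argument gives $I>0$ once $I_0 \not\equiv 0$; the upper bound $I<1$ follows from invariance of $R(\delta) \subset [0,1]^2$ together with the strong maximum principle ruling out $I=1$ being attained. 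The main obstacle is being careful that the coefficients appearing after linearisation are genuinely bounded and that $f(M,I)$ is $C^1$ on $R(\delta)$ — in particular the denominator $\alpha_2 I + \beta_2(1-M)$ must stay bounded away from zero on $R(\delta)$, which is exactly why the clipped corner $N(\delta)$ was removed; I would verify this non-degeneracy explicitly before invoking the maximum principle.
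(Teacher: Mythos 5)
Your proposal tracks the paper's proof almost exactly: existence and uniqueness are read off from the invariant region $R(\delta)$ and classical parabolic theory, the far-field limits are handled by a translation/comparison argument (the paper simply cites Meyer and Needham's treatment of a related scalar problem for this step, which amounts to the same strategy), and the interior containment $(M,I)\in(0,1)^2$ follows from the invariance of $R(\delta)$ together with the strong parabolic maximum principle applied to each component, just as the paper states. Your extra observation that the denominator $\alpha_2 I + \beta_2(1-M)$ is bounded away from zero on $R(\delta)$ -- so that $f$ is $C^1$ there and the linearised coefficients are bounded -- is exactly the non-degeneracy one must verify before invoking the maximum principle, and is correctly identified as the reason the corner $N(\delta)$ is clipped.
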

\begin{proof}
    The first part follows directly from the considerations in Section \ref{sec_model_reduction}. The second part, relating to the limits as $|x|\to \infty$, can be established following very closely the approach given in Meyer and Needham \cite{meyer2015cauchy} (see Chapter 8, Proposition 8.36 -- Remark 8.38) for a related scalar problem. The final containment is a consequence of the invariance of $R(\delta)$, followed by two respective applications of the strong parabolic maximum principle to each component equation of \eqref{eqn:LDS} on $D_{\infty}$.
\end{proof}
To quantify the limits in this result, we next consider the planar temporal dynamical system (\ref{eqn3.3})
in the invariant region $R(\delta)$ on the $(m,i)$-phase plane, and in particular when $\delta$ is small. First we locate and classify the equilibrium points of (DS) in $R(\delta)$. We immediately observe that for each $\sigma \ge \delta$, there are non-isolated equilibrium points at
\begin{equation*}
    (m,i) = (m_e,0)\equiv \mathbf{e}(m_e)~~\text{for each}~~ m_e\in [0,1-\delta]. \label{eqn3.5}
\end{equation*}
When $\sigma\in (\delta,1)$, then $\mathbf{e}(m_e)$ is a degenerate stable node for $m_e\in [0,1-\sigma)$ and a degenerate unstable node for $m_e\in (1-\sigma, 1-\delta]$. However, for $\sigma\in (1,\infty)$ then $\mathbf{e}(m_e)$ is a degenerate unstable node for each $m_e\in [0,1-\delta]$. In addition, for each $\sigma\ge \delta$, there is the fully saturated equilibrium point at
\begin{equation*}
    (m,i) = (1, b(\sigma)\sigma) \equiv\mathbf{e}_F,\label{eqn3.6}
\end{equation*}
which is always a hyperbolic stable node. These are the equilibrium points which exist in $R(\delta)$ at every $\sigma\ge \delta$, and are the only equilibrium points in $R(\delta)$ when $\sigma\in [\delta,1]$. However, at $\sigma=1$ an equilibrium point transcritical bifurcation takes place at $(m,i)=(0,0)$, which generates a further, transitional, equilibrium point in $R(\delta)$ when $\sigma\in (1,\infty)$, at
\begin{equation*}
    (m,i) = (0,b(\sigma)(\sigma-1)) \equiv \mathbf{e}_T, \label{eqn3.7}
\end{equation*}
which is a hypebolic saddle point. The location of these equilibriun points, which are all on the boundary of $R(\delta)$, together with index theory, precludes the possibility of any periodic obits in $R(\delta)$. We also observe that $m=0$ and $m=1$ are phase paths in $R(\delta)$, whilst the $\omega$-limit set for any phase path starting in the interior of $R(\delta)$ must be a subset of
\begin{equation*}
    \Omega = \begin{cases}
        \{\mathbf{e}_F\},~~\sigma\in (1,\infty),\\
        \{\mathbf{e}_F\} \cup \{\mathbf{e}(m_e): m_e\in [0, 1-\sigma]\},~~\sigma\in [\delta,1].
    \end{cases}
    \label{eqn3.8}
\end{equation*}
We can now construct the global phase portrait for (DS) in $R(\delta)$, and, in particular, we do this for $\delta$ small. It follows from (\ref{eqn3.3}), with (\ref{eqn1.17}), that there exists a stable slow manifold, given by,
\begin{equation*}
    \mathcal{S}_{\delta} = \{(m,i)\in R(\delta): i = \text{max}(0, b(\sigma)(m + (\sigma-1))) \}, \label{eqn3.9}
\end{equation*}
onto which all phase paths contract exponentially, to within $O(\delta)$, on the time scale $t=O(\delta)$, parallel
to the $i$-axis, and thereafter the dynamics remains constrained to a $\delta$-neighbourhood of $\mathcal{S}_{\delta}$. The global phase portrait in $R(\delta)$ is now readily constructed, and is shown in Figure 1, for the cases $\sigma \in [\delta,1]$ and $\sigma\in (1,\infty)$. Specifically, for initial data $(M_0,I_0)\in R(\delta)$ {and with $I_0$ positive,} we have the cases,
\begin{itemize}
    \item For $\sigma\in [\delta,1]$ and $M_0\in [0, 1-\sigma]$ then
    \begin{equation*}
        (m(t),i(t)) \to \mathbf{e}(M_0)~~\text{as}~~t\to \infty, \label{eqn3.10}
    \end{equation*}
    monotonically, on the time scale $t=O(\delta)$.
    \item For $\sigma\in [\delta,1]$ and $M_0\in (1-\sigma, 1]$ then first $(m(t),i(t))$ rapidly approaches $\mathcal{S}_{\delta}$, monotonically, on the time scale $t=O(\delta)$, and thereafter remains on $\mathcal{S}_{\delta}$ and has,
    \begin{equation*}
        (m(t),i(t)) \to \mathbf{e}_F~~\text{as}~~t\to \infty \label{eqn3.11}
    \end{equation*}
    monotonically.
\item For $\sigma\in (1,\infty)$ and $M_0\in (0, 1]$ then first $(m(t),i(t))$ rapidly approaches $\mathcal{S}_{\delta}$, monotonically, on the time scale $t=O(\delta)$, and thereafter remains on $\mathcal{S}_{\delta}$ and has,
    \begin{equation*}
        (m(t),i(t)) \to \mathbf{e}_F~~\text{as}~~t\to \infty, \label{eqn3.12}
    \end{equation*}
    monotonically.
    \item For $\sigma\in (1,\infty)$ and $M_0 = 0$ then $m(t)\equiv0$ and,
    \begin{equation*}
        (m(t),i(t)) \to \mathbf{e}_T~~\text{as}~~t\to \infty, \label{eqn3.13}
    \end{equation*}
    monotonically and rapidly, on the time scale $t=O(\delta)$.
\end{itemize}
\begin{figure}
    
   \begin{subfigure}{0.5\textwidth}

        \centering
         \includegraphics[width=0.8\linewidth]{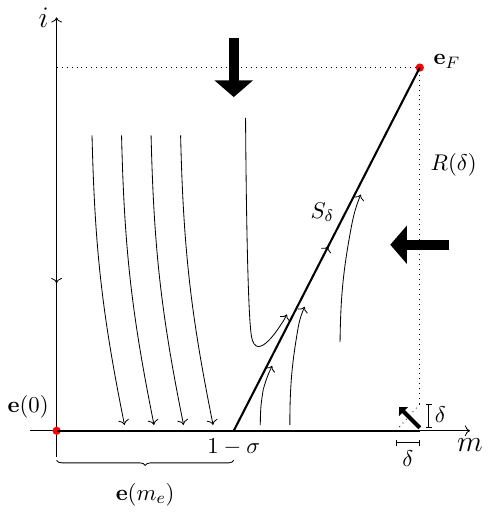}
    \caption{}
    \label{fig:enter-label}  

    \end{subfigure}
    \vspace{1em}
\raisebox{0.72em}{\begin{subfigure}{0.5\textwidth}
   
    \centering
          
          \includegraphics[width=0.8\linewidth]{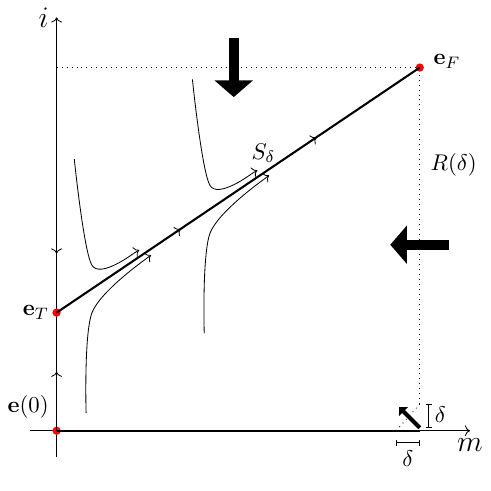}
          
    \caption{}
    \label{fig:enter-label2}  
 
    \end{subfigure}}

    \caption{
    Qualitative sketches of the phase portraits for the planar temporal dynamical system (DS), depending on $\sigma$, showing the temporal dynamics confined to the clipped region $R(\delta)$. (a) $\sigma\in[\delta,1]$; the stable slow manifold $ \mathcal{S}_{\delta}$ connects $\textbf{e}_F$ to $\textbf{e}(0)$ via the continuum of equilibrium points $\{(m_e,0): m_e\in[0,1-\sigma)]\}$. (b) $\sigma\in(1,\infty)$; the stable slow manifold connects $\textbf{e}_F$ to $\textbf{e}_T$.
    }
\end{figure}
We now have the following qualitative and quantitative results for (LIVP), when $\delta$ is small. The proof in each case can be found in Appendix \ref{app_A}.

\subsection{(LIVP) with \texorpdfstring{$\sigma\in [\delta,1-\delta^{\frac{1}{4}})$}{} and \texorpdfstring{$M_0\in (0,(1-\sigma)-\delta^{\frac{1}{4}})$}{}}

Under the above parameter restrictions, we may now state the key result concerning the qualitative structure of the solution to (LIVP),\\

\begin{theorem} \label{thm3.2}
With $\delta$ small, let $(M,I):\overline{D}_{\infty}\to R(\delta)$ be the solution to (LIVP). Then
\begin{equation*}
  0<I(x,t) < \frac{1}{2}I_0 \exp(-\delta^{-1}c(M_0,\sigma)t) \erfc(x/2(Dt)^{\frac{1}{2}}) ~~\forall~~(x,t)\in D_{\infty}, \label{eqn3.23}
\end{equation*}
and
\begin{multline}
    \frac{1}{2} M_0 \erfc(x/2t^{\frac{1}{2}}) < M(x,t) < \frac{1}{2}M_0\exp\left(I_0\frac{\delta}{c(M_0,\sigma)}(1- \exp(-\delta^{-1}c(M_0,\sigma)t))\right)\\ \erfc(x/2t^{\frac{1}{2}}) ~~\forall~~(x,t)\in D_{\infty}. \label{eqn3.24}
\end{multline}  
with
 \begin{equation}
        c(M_0,\sigma) = \frac{a(\sigma)b(\sigma)((1-\sigma)-M_0)}{2(\alpha_2 + \beta_2)}>0.
        \end{equation}
\end{theorem}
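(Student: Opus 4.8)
The plan is a comparison argument exploiting the self‑limiting feedback in (LDS): as long as $M$ stays below $1-\sigma$ one has $f(M,I)<0$, so $I$ decays exponentially on the fast scale $\delta^{-1}t$, and that decay in turn caps the growth of $M$ since $IM(1-M)\le IM$. Write $\mu^{\ast}:=\tfrac12\big((1-\sigma)+M_0\big)$ for the midpoint of $(M_0,1-\sigma)$, set
\[
\bar I(x,t):=\tfrac12 I_0\,e^{-\delta^{-1}c(M_0,\sigma)t}\,\erfc\!\big(x/2(Dt)^{1/2}\big),
\]
and, with $g(t):=I_0 e^{-\delta^{-1}c(M_0,\sigma)t}$ and $G(t):=\int_0^t g(s)\,\mathrm{d}s=I_0\,\tfrac{\delta}{c(M_0,\sigma)}\big(1-e^{-\delta^{-1}c(M_0,\sigma)t}\big)$, set
\[
\bar M(x,t):=\tfrac12 M_0\,e^{G(t)}\,\erfc\!\big(x/2t^{1/2}\big).
\]
A direct computation gives $\bar I_t=D\bar I_{xx}-\delta^{-1}c(M_0,\sigma)\bar I$ and $\bar M_t=\bar M_{xx}+g(t)\bar M$, each with the front initial data of (LIVP); since these are precisely the right‑hand sides of the claimed upper bounds, it suffices to prove the comparisons $I\le\bar I$ and $M\le\bar M$, together with the heat‑equation lower bound for $M$.

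The pointwise engine is the elementary estimate: \emph{if $(M,I)\in R(\delta)$ with $M\le\mu^{\ast}$, then $f(M,I)\le -c(M_0,\sigma)\,I$}, with strict inequality when $I>0$ and $M<\mu^{\ast}$. Indeed, from the definitions $a(\sigma)b(\sigma)=\beta_1\beta_2$; bounding the numerator bracket of $f$ by $b(\sigma)\big(M-(1-\sigma)\big)-I\le -b(\sigma)\big((1-\sigma)-\mu^{\ast}\big)=-\tfrac12 b(\sigma)\big((1-\sigma)-M_0\big)$ and the denominator by $\alpha_2 I+\beta_2(1-M)\le\alpha_2+\beta_2$ gives
\[
f(M,I)\le -\frac{\beta_1\beta_2\big((1-\sigma)-M_0\big)}{2(\alpha_2+\beta_2)}\,I=-c(M_0,\sigma)\,I,
\]
and $c(M_0,\sigma)>0$ exactly because $(1-\sigma)-M_0>\delta^{1/4}>0$ under the standing hypotheses.

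The crux — which I expect to be the main obstacle — is to show \emph{a priori} that $M$ never leaves $\{M\le\mu^{\ast}\}$, since the $M$‑equation is not by itself dissipative. I would argue by continuation. Put $T^{\ast}:=\sup\{\tau>0:\ M(x,t)\le\mu^{\ast}\ \text{for all}\ x\in\mathbb{R},\ t\in(0,\tau]\}$; then $T^{\ast}>0$, e.g.\ because the crude bound $M(x,t)\le M_0 e^{t}$ (a one‑line comparison using $IM(1-M)\le M$) keeps $M<\mu^{\ast}$ for small $t$, as $\mu^{\ast}>M_0$. On $(0,T^{\ast}]$ the estimate above yields $I_t\le DI_{xx}-\delta^{-1}c(M_0,\sigma)I$, so the parabolic comparison principle for bounded solutions gives $I\le\bar I$; since $I\le\bar I\le g(t)$ we have $IM(1-M)\le g(t)M$, so $M$ is a subsolution of $w_t=w_{xx}+g(t)w$ and a second comparison gives $M\le\bar M$ on $(0,T^{\ast}]$. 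But $\bar M\le M_0\,e^{G(\infty)}=M_0\exp\!\big(I_0\delta/c(M_0,\sigma)\big)$, and since $c(M_0,\sigma)\ge\beta_1\beta_2\,\delta^{1/4}/\big(2(\alpha_2+\beta_2)\big)$ one has $I_0\delta/c(M_0,\sigma)=O(\delta^{3/4})$ \emph{uniformly} over the admissible $(\sigma,M_0)$, whereas $\mu^{\ast}-M_0=\tfrac12\big((1-\sigma)-M_0\big)>\tfrac12\delta^{1/4}$. Hence for all sufficiently small $\delta$ (a smallness depending only on the $O(1)$ parameters and $I_0$) we obtain $M\le\bar M<\mu^{\ast}$ on $(0,T^{\ast}]$ with a gap bounded below by a fixed positive multiple of $\delta^{1/4}$; continuity of $M$ then forces $T^{\ast}=\infty$. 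This is the step in which the $\delta^{1/4}$‑windows in the hypotheses are used.

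With $T^{\ast}=\infty$ the two comparisons hold throughout $D_\infty$, which are the asserted upper bounds; the strict inequalities follow from the strong parabolic maximum principle applied to $\bar I-I$ and $\bar M-M$, whose governing differential inequalities are strict in $D_\infty$ because $(M,I)\in(0,1)^2$ there by the first theorem of this section. That same containment gives $M_t-M_{xx}=IM(1-M)>0$ throughout $D_\infty$, so $M$ is a strict supersolution of the heat equation with front data $M_0H(-x)$, and the maximum principle gives $M>\tfrac12 M_0\,\erfc\!\big(x/2t^{1/2}\big)$, completing the bounds. The only genuinely delicate point is the continuation step; verifying the supersolution identities and the pointwise $f$‑estimate (hence the precise form of $c(M_0,\sigma)$) is routine but must be done carefully.
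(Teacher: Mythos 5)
Your proposal is correct and follows essentially the same route as the paper: the pointwise estimate $f(M,I)\le -c(M_0,\sigma)I$ on $\{M\le\mu^{\ast}\}$, the three parabolic comparison operators $\mathcal{N}[w]=w_t-Dw_{xx}+\delta^{-1}c\,w$, $\mathcal{N}[w]=w_t-w_{xx}-g(t)w$, and $\mathcal{N}[w]=w_t-w_{xx}$ are exactly those used in the paper, and your $\mu^{\ast}$ is the paper's $M_a(M_0,\sigma)$ from Lemma A.1. The only difference is cosmetic: you establish the a priori confinement $M\le\mu^{\ast}$ by a continuation argument on $T^{\ast}$, whereas the paper argues by first-time contradiction (choosing a first $(x_a,t_a)$ with $M(x_a,t_a)=M_a$, which is made rigorous using the $x\to\pm\infty$ limits of Theorem 3.1); if you keep the continuation formulation you should likewise invoke those boundary limits to rule out the supremum being approached along a sequence escaping to spatial infinity.
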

\hfill\break As a consequence of this result we have immediately,\\

\begin{corollary}\label{cor3.1}
  With $\delta$ small, let $(M,I):\overline{D}_{\infty}\to R(\delta)$ be the solution to (LIVP). Then  
  \begin{equation*}
      I(x,t) = O(I_0 \exp(-\delta^{-1}c(M_0,\sigma)t) \erfc(x/2(Dt)^{\frac{1}{2}})),
  \end{equation*}
  \begin{equation*}
     M(x,t) = \frac{1}{2}M_0(\erfc(x/2t^{\frac{1}{2}}) (1 + O(\delta)),
  \end{equation*}
  as $\delta\to 0$, uniformly for $t\ge O(1)$ and $x\in \mathbb{R}$.
\end{corollary}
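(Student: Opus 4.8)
The plan is to read off the corollary directly from the two-sided bounds established in Theorem \ref{thm3.2}, treating the $I$-estimate and the $M$-estimate separately. For $I$: the theorem gives $0 < I(x,t) < \tfrac12 I_0 \exp(-\delta^{-1} c(M_0,\sigma)t)\,\erfc(x/2(Dt)^{1/2})$ for all $(x,t)\in D_\infty$, and since $\erfc \ge 0$ on $\mathbb{R}$ and $\exp(-\delta^{-1}c\,t) > 0$, the lower bound $0$ and this upper bound together say precisely that $I(x,t)$ is non-negative and bounded above by a quantity of the stated order. Hence $I(x,t) = O(I_0\exp(-\delta^{-1}c(M_0,\sigma)t)\,\erfc(x/2(Dt)^{1/2}))$ uniformly in $(x,t)$; in particular this holds uniformly for $t\ge O(1)$, $x\in\mathbb{R}$, as $\delta\to 0$. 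No further work is needed here — the corollary's $I$-statement is a verbatim weakening of the theorem's $I$-bound.

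For $M$: the theorem sandwiches $M(x,t)$ between $\tfrac12 M_0\,\erfc(x/2t^{1/2})$ and $\tfrac12 M_0 \exp\!\big(I_0\tfrac{\delta}{c(M_0,\sigma)}(1-\exp(-\delta^{-1}c(M_0,\sigma)t))\big)\erfc(x/2t^{1/2})$. The key observation is that the exponential prefactor in the upper bound is of the form $\exp(\delta\,\theta(x,t))$ where $0 \le \theta(x,t) \le I_0/c(M_0,\sigma)$ uniformly in $(x,t)$, because $0 \le 1 - \exp(-\delta^{-1}c\,t) \le 1$ for all $t\ge 0$ (recall $c(M_0,\sigma) > 0$ under the standing parameter restrictions). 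Thus the upper bound equals $\tfrac12 M_0\,\erfc(x/2t^{1/2})\,(1 + O(\delta))$ as $\delta\to 0$, uniformly in $(x,t)$, by a one-line Taylor expansion of $\exp(\delta\theta) = 1 + O(\delta)$ with the $O(\delta)$ constant controlled by $I_0/c(M_0,\sigma)$. Combining this with the lower bound $\tfrac12 M_0\,\erfc(x/2t^{1/2}) = \tfrac12 M_0\,\erfc(x/2t^{1/2})(1 + 0)$, the squeeze gives $M(x,t) = \tfrac12 M_0\,\erfc(x/2t^{1/2})(1 + O(\delta))$ as $\delta\to 0$, uniformly for $t\ge O(1)$ and $x\in\mathbb{R}$.

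There is essentially no obstacle: the corollary is a routine asymptotic restatement of Theorem \ref{thm3.2}, the only quantitative input being the elementary inequalities $0 \le 1 - e^{-s} \le 1$ for $s\ge 0$ and $e^{\delta\theta} = 1 + O(\delta)$ for bounded $\theta$. The one point worth stating explicitly is the uniformity: since the bound on $\theta$ is independent of $(x,t)$ and of $\delta$, the $O(\delta)$ error term is genuinely uniform over all of $D_\infty$, hence in particular over the region $\{t\ge O(1)\}\times\mathbb{R}$ quoted in the corollary. (If one wishes, the restriction $t\ge O(1)$ can be invoked merely to emphasise that the approximation is being used in the regime where the reduced system \eqref{eqn:LDS} itself approximates \eqref{eqn:RDS}; it plays no role in the inequality manipulations themselves.)
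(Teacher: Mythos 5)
Your proposal is correct and matches the paper's approach: the paper presents Corollary~\ref{cor3.1} with the phrase ``As a consequence of this result we have immediately,'' treating it as a direct consequence of the two-sided bounds in Theorem~\ref{thm3.2}, which is exactly the squeeze-plus-Taylor argument you spell out. Your explicit remark that $0\le 1-e^{-s}\le 1$ gives a uniform bound on the exponent (so $e^{\delta\theta}=1+O(\delta)$ uniformly) is the only nontrivial observation, and you have it.
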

\hfill\break Thus, in this case, we see that the species $I$ decays to zero exponentially fast on the rapid time scale $t=O(\delta)$, leaving, in effect, the initial distribution of species $M$ to simply diffuse on the time scale $t=O(1)$. Therefore we may conclude, in this case, that the critical value $M_0 = (1-\sigma)$ provides a \emph{threshold value} for the initial value problem (LIVP) in relation to the activation of any kind of travelling wavefront structures.

\subsection{(LIVP) with \texorpdfstring{$\sigma\in [\delta,1]$ and $M_0\in (1-\sigma, 1]$}{}, or, \texorpdfstring{$\sigma\in (1, \infty)$}{} and \texorpdfstring{$M_0\in (0,1]$}{}}
To begin with it is convenient to introduce $u=U_F: \overline{D}_{\infty} \to [0,1]$ as the global and unique solution to the classical FKPP scalar reaction-diffusion PDE, with reaction function $ u(1-u)$, subject to the initial condition $u(x,0) = H(-x)~\forall~x\in \mathbb{R}$ (see, for example, Fife \cite{Fife1979}). We then have,\\
\begin{theorem} \label{thm3.3}
  With $\delta$ small, let $(M,I):\overline{D}_{\infty}\to R(\delta)$ be the solution to (LIVP). Then,
  \begin{equation*}
 \frac{1}{2}M_0\erfc(x/t^{\frac{1}{2}}) < M(x,t) < U_F(x,t),
 \end{equation*}
 and
 \begin{equation*}
     0 < I(x,t) < b(\sigma)\sigma + \left(\frac{1}{2}I_0\erfc(x/2t^{\frac{1}{2}}) - b(\sigma)\sigma\right)\exp\left(-a(\sigma)/(\alpha_2\delta)t\right)
 \end{equation*}
 for all $(x,t)\in D_{\infty}$.
\end{theorem}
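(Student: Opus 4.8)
The plan is to establish each of the four inequalities by producing an explicit super- or sub-solution and invoking the parabolic comparison principle, promoting the weak inequalities to the strict ones with the strong (Hopf) maximum principle. Throughout I would use the a priori confinement already proved, namely $(M,I)\in[0,1]^2$ on $\overline D_\infty$ and $(M,I)\in(0,1)^2$ on $D_\infty$ (so in particular $M<1$ and $I<1$ on $D_\infty$), the fact that $I\equiv0$ and $M\equiv0,1$ solve the relevant scalar equations, and the definition of $U_F$ as the global, $[0,1]$-valued FKPP flow of $H(-x)$.

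\emph{Bounds on $M$.} For the upper bound, from \eqref{eqn:LDS} one has $M_t-M_{xx}-M(1-M)=(I-1)M(1-M)\le0$ since $0\le M\le1$ and $I\le1$; hence $M$ is a sub-solution of the FKPP equation solved by $U_F$, and as $M(x,0)=M_0H(-x)\le H(-x)=U_F(x,0)$ the comparison principle gives $M\le U_F$. Writing $w=U_F-M\ge0$ one finds $w_t-w_{xx}-(1-U_F-M)w=(1-I)M(1-M)$, a linear parabolic equation whose right-hand side is strictly positive on $D_\infty$ (because $I<1$ and $M\in(0,1)$ there) with $w(\cdot,0)\ge0$, so $w>0$, i.e.\ $M<U_F$ on $D_\infty$. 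For the lower bound, the reaction $IM(1-M)$ is non-negative, so $M$ is a super-solution of the heat equation with the same front data; it therefore dominates the pure-diffusion flow of $M_0H(-x)$, and since the source $IM(1-M)$ is strictly positive on $D_\infty$ the strong maximum principle makes the domination strict, which yields the stated lower bound.

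\emph{Bounds on $I$.} The lower bound is immediate: $f(M,0)=0$, so $I\equiv0$ solves the $I$-equation; as $I(x,0)=I_0H(-x)$ is non-negative and not identically zero, $I>0$ on $D_\infty$. For the upper bound I would first rewrite the reaction, from its definition through $B_i$, as $f(M,I)=\dfrac{\alpha_2 I(1-I)}{\alpha_2 I+\beta_2(1-M)}-\beta_1 I$; discarding the non-negative term $\beta_2(1-M)$ from the denominator and using $0\le I\le1$ then gives the affine majorant $f(M,I)\le 1-(1+\beta_1)I$, which is strict wherever $M<1$ and $I>0$, hence strict on $D_\infty$. Consequently $I$ is a strict sub-solution of the linear problem $J_t=DJ_{xx}+\delta^{-1}\bigl(1-(1+\beta_1)J\bigr)$ with data $J(x,0)=I_0H(-x)$. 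Solving this linear problem by the substitution $J=(1+\beta_1)^{-1}+e^{-\delta^{-1}(1+\beta_1)t}\tilde J$, which reduces it to the heat equation for $\tilde J$ with data $I_0H(-x)-(1+\beta_1)^{-1}$, and using the identities $b(\sigma)\sigma=(1+\beta_1)^{-1}$ and $a(\sigma)/\alpha_2=1+\beta_1$ (both following from $\sigma=\alpha_2/(\beta_1\beta_2)$), one finds that $J$ is precisely the function on the right of the stated bound; comparison together with the strong maximum principle then gives $I<J$ on $D_\infty$.

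\emph{Main obstacle.} The four comparisons are routine once the comparison functions are identified; the two points needing care are (i) recasting the rational reaction $f$ so that it admits the clean affine majorant $1-(1+\beta_1)I$ and then verifying that the constants emerging from the explicit linear solution coincide with $a(\sigma)$ and $b(\sigma)$, and (ii) handling the discontinuous Heaviside initial data rigorously in the comparison and strong-maximum-principle steps — for instance by mollifying the data, or by first working on $\{t\ge\tau\}$ and letting $\tau\downarrow0$ — so that the strict inequalities hold at every point of $D_\infty$.
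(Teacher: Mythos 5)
Your proposal is correct and takes essentially the same route as the paper: the paper's proof of Theorem \ref{thm3.3} (in Appendix A) consists precisely of invoking Theorem 3.1 and the parabolic comparison theorem with the three operators $w_t-w_{xx}$, $w_t-w_{xx}-w(1-w)$, and $w_t-w_{xx}-a(\sigma)\alpha_2^{-1}(b(\sigma)\sigma-w)$, which are exactly the heat, FKPP, and affine comparison problems you construct, and your identification $a(\sigma)/\alpha_2=1+\beta_1$, $b(\sigma)\sigma=(1+\beta_1)^{-1}$ correctly recovers the paper's constants from the reformulation $f(M,I)=B_i(M,I)(1-I)-\beta_1 I$. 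The paper states these steps without the level of detail you supply (in particular it does not spell out the affine majorisation of $f$ or the strict-inequality step), so your writeup fills in what the paper leaves implicit; the only discrepancy, the appearance of $t$ rather than $Dt$ inside the $\erfc$ for the $I$-bound, is an inconsistency already present in the paper's statement and operator, not an error in your argument.
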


\hfill\break In this case the above result indicates the possibility of permanent form travelling wavefronts developing in both of the species $M$ and $I$ when $t$ is large. To initiate further investigation into this possible key feature, we now consider the existence and structure of permanent form propagating wavefront solutions to the system \eqref{eqn:LDS}, which are non-negative, and propagate into the zero species equilibrium state $(M,I) = (0,0)$ at a positive and constant propagation speed.
\section{Permanent Form Travelling Waves}\label{sec_PTW}
Throughout we consider $0<\delta\ll1$ and there are three cases which it is instructive to consider separately. We begin with the case when $\delta\le \sigma<1$.

\subsection{Travelling waves when \texorpdfstring{$\delta\le \sigma<1$}{}}\label{sec_4.1}
To begin we have the definition:\\

\begin{definition}
A \textbf{Full Transition Permanent Form Travelling Wave} solution to the system \eqref{eqn:LDS} (a [FPTW]) is a non-negative solution to \eqref{eqn:LDS}, say $(M_T,I_T):\mathbb{R}\to (\overline{\mathbb{R}}^+\times\overline{\mathbb{R}}^+)$, which depends only upon the travelling wave coordinate $z=x-vt$, where $v> 0$ is the constant propagation speed, and satisfies,
\begin{equation*}
    (M_T(z),I_T(z))\to
    \begin{cases}
    \mathbf{0}~~\text{as}~~z\to \infty\\
    \mathbf{e}_F~~\text{as}~~z\to -\infty
    \end{cases}
\end{equation*}
with
\begin{equation*}
(M_T(z),I_T(z)) \in [0,1]^2~~\forall~~z\in \mathbb{R}.    
\end{equation*}
\end{definition}
\hfill\break It is now straightforward to determine that [FPTW] solutions are precisely solutions to the nonlinear eigenvalue problem (with eigenvalue $v>0$),
\begin{varsubequations}{EVP1}\label{eqn:EVP1}
  \begin{equation}
    M_T'' + vM_T' + I_TM_T(1-M_T) = 0,~~z\in \mathbb{R}, \label{eqn4.3}
\end{equation}
\begin{equation}
    \overline{\delta}( I_T'' + \overline{v}I_T') + f(M_T,I_T) = 0,~~z\in \mathbb{R}, \label{eqn4.4}
\end{equation}
subject to
\begin{equation}
(M_T(z),I_T(z))\to
  \begin{cases}
    (0,0)~~\text{as}~~z\to \infty,\\
    (1,b(\sigma)\sigma)~~\text{as}~~z\to -\infty, \label{eqn4.5}
    \end{cases}
\end{equation}
\begin{equation}
 (M_T(z),I_T(z)) \in [0,1]^2~~\forall~~z\in \mathbb{R}.
\end{equation}  
\end{varsubequations}
Here, for convenience, we have introduced the notation,
\begin{equation*}
    \overline{\delta}=D\delta,~~\overline{v}=D^{-1}v.
\end{equation*}
To begin we have some basic preliminary results (whose proofs are deferred to Appendix \ref{app_B}):\\
\begin{prop}\label{prop_4.1}
    Let $(M_T,I_T):\mathbb{R}\to [0,1]^2$ be a [FPTW], then $M_T(z)>0$ and $I_T(z)>0$ for all $z\in \mathbb{R}$.
\end{prop}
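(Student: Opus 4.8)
Proof proposal for Proposition \ref{prop_4.1}.

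The plan is to exploit the fact that each of \eqref{eqn4.3}, \eqref{eqn4.4}, although nonlinear as a coupled system, is \emph{linear} in the single unknown it governs once the other unknown is treated as a (continuous, bounded) coefficient, and then to invoke uniqueness for the initial value problem for linear second-order ODEs together with the prescribed behaviour at $z\to\pm\infty$. First I would rewrite \eqref{eqn4.3} as the linear homogeneous ODE
\[
M_T'' + vM_T' + c(z)M_T = 0, \qquad c(z) := I_T(z)\bigl(1 - M_T(z)\bigr),
\]
where $c$ is continuous and indeed $c(z)\in[0,1]$ because $(M_T,I_T)\in[0,1]^2$. Suppose, for contradiction, that $M_T(z_0)=0$ for some $z_0\in\mathbb{R}$. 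Since $M_T\ge 0$ and $M_T\in C^2(\mathbb{R})$, the point $z_0$ is an interior global minimum of $M_T$, so $M_T'(z_0)=0$. Uniqueness for the linear initial value problem then forces $M_T\equiv 0$ on $\mathbb{R}$, contradicting $M_T(z)\to 1$ as $z\to-\infty$. Hence $M_T(z)>0$ for all $z$. Applying the identical argument to $w:=1-M_T\ge 0$, which solves $w'' + vw' - I_TM_T\,w = 0$, together with $w(z)\to 1$ as $z\to+\infty$, shows in fact $0<M_T(z)<1$ for all $z\in\mathbb{R}$; this strict upper bound is precisely what makes the next step work.

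Next I would treat $I_T$. Writing $f(M_T,I_T) = I_T\,g(z)$ with
\[
g(z) := \frac{a(\sigma)\bigl(b(\sigma)(\sigma-1) + b(\sigma)M_T(z) - I_T(z)\bigr)}{\alpha_2 I_T(z) + \beta_2\bigl(1 - M_T(z)\bigr)},
\]
equation \eqref{eqn4.4} becomes the linear ODE $I_T'' + \overline{v}\,I_T' + \overline{\delta}^{\,-1} g(z)\,I_T = 0$. The coefficient $g$ is continuous and bounded on all of $\mathbb{R}$: its numerator is bounded since $(M_T,I_T)\in[0,1]^2$, and its denominator $\alpha_2 I_T + \beta_2(1-M_T)$ is a strictly positive continuous function (here I use $M_T<1$) with limits $\alpha_2 b(\sigma)\sigma>0$ as $z\to-\infty$ and $\beta_2>0$ as $z\to+\infty$, hence bounded away from zero. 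Arguing exactly as before, if $I_T(z_0)=0$ for some $z_0$, then $z_0$ is an interior global minimum of $I_T\ge 0$, so $I_T'(z_0)=0$, and uniqueness for the linear ODE forces $I_T\equiv 0$, contradicting $I_T(z)\to b(\sigma)\sigma>0$ as $z\to-\infty$. Therefore $I_T(z)>0$ for all $z\in\mathbb{R}$.

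The step I expect to be the main obstacle — or at least the one deserving care — is the passage from ``$M_T$ (resp.\ $I_T$) attains the value $0$'' to ``$M_T'$ (resp.\ $I_T'$) also vanishes there''. This is immediate once one observes that non-negativity turns a zero into an interior minimum, but it is essential: the reaction terms in \eqref{eqn:EVP1} vanish to first order on $\partial[0,1]^2$, so the naive sign argument at the minimum (reading off $M_T''(z_0)$ from \eqref{eqn4.3}) gives only $M_T''(z_0)=0$ and is inconclusive. Everything else is the standard uniqueness theory for linear second-order ODEs with continuous coefficients, plus the mild bookkeeping needed to verify that $g$ is globally bounded, for which the strict inequality $M_T<1$ obtained in the first step is used.
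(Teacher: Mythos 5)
Your proof is correct and follows essentially the same strategy as the paper's: a zero of $M_T$ or $I_T$ forces the derivative to vanish there (since $[0,1]$-valuedness makes it an interior minimum), after which uniqueness for the resulting regular second-order initial value problem forces the component to vanish identically, contradicting the prescribed limits at $z\to\pm\infty$. The paper dispatches the $I_T$ case with ``follows similarly,'' whereas you quite rightly first establish $M_T(z)<1$ (which the paper only states in Proposition 4.2) so that the denominator $\alpha_2 I_T+\beta_2(1-M_T)$ is bounded away from zero and the initial value problem for $I_T$ is genuinely regular --- a worthwhile detail to make explicit.
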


\hfill\break Next we have upper bounds:\\
\begin{prop}
  Let $(M_T,I_T):\mathbb{R}\to [0,1]^2$ be a [FPTW], then $M_T(z)<1$ and $I_T(z)<b(\sigma)\sigma$ for all $z\in \mathbb{R}$.   
\end{prop}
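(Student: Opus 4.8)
The plan is to establish the two upper bounds separately, each by a maximum-principle / phase-plane argument applied to the relevant component equation of \eqref{eqn:EVP1}, combined with the boundary conditions \eqref{eqn4.5} and the already-established strict positivity from Proposition~\ref{prop_4.1}. First I would prove $M_T(z)<1$ for all $z$. Suppose not; since $M_T\le1$ on $\mathbb{R}$ (from the containment in $[0,1]^2$) and $M_T\to0$ as $z\to\infty$, $M_T\to1$ as $z\to-\infty$, any value $1$ would have to be attained as an interior maximum at some finite $z_0$, giving $M_T(z_0)=1$, $M_T'(z_0)=0$, $M_T''(z_0)\le0$. Evaluating \eqref{eqn4.3} at $z_0$ yields $M_T''(z_0) = -I_T(z_0)M_T(z_0)(1-M_T(z_0)) = 0$, so $z_0$ is a degenerate critical point; a standard argument (differentiating the equation, or invoking uniqueness for the second-order ODE with data $M_T(z_0)=1$, $M_T'(z_0)=0$ against the constant solution $M_T\equiv1$) forces $M_T\equiv1$, contradicting $M_T\to0$ at $+\infty$. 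Hence $M_T(z)<1$ everywhere.

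Next I would prove $I_T(z)<b(\sigma)\sigma$. Here I would argue by contradiction using \eqref{eqn4.4}. Note $I_T\to0$ at $+\infty$ and $I_T\to b(\sigma)\sigma$ at $-\infty$, and $I_T\le1$ throughout; since $\sigma<1$ we have $b(\sigma)\sigma<1$, so if $I_T$ ever reached or exceeded $b(\sigma)\sigma$ at an interior point it would attain a maximum value $\ge b(\sigma)\sigma$ at some finite $z_1$ with $I_T'(z_1)=0$, $I_T''(z_1)\le0$. From \eqref{eqn4.4}, $\overline{\delta}I_T''(z_1) = -f(M_T(z_1),I_T(z_1))$, and since $\overline{\delta}>0$ this forces $f(M_T(z_1),I_T(z_1))\ge0$. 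Now I examine the sign of $f$ from \eqref{eqn1.17}: the denominator $\alpha_2I+\beta_2(1-M)$ is positive on $R(\delta)$, and $a(\sigma)>0$, $b(\sigma)>0$, $I_T(z_1)>0$, so the sign of $f$ is that of $b(\sigma)(\sigma-1)+b(\sigma)M_T(z_1)-I_T(z_1) = b(\sigma)(\sigma-1+M_T(z_1)) - I_T(z_1)$. Using $M_T(z_1)<1$ (just proved) and $I_T(z_1)\ge b(\sigma)\sigma$, this quantity is strictly less than $b(\sigma)(\sigma-1+1) - b(\sigma)\sigma = 0$, so $f(M_T(z_1),I_T(z_1))<0$, contradicting $f\ge0$. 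Hence $I_T(z)<b(\sigma)\sigma$ for all $z$.

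The main obstacle, and the point requiring the most care, is the degenerate-maximum case in the $M_T$ argument: because the reaction term in \eqref{eqn4.3} vanishes identically when $M_T=1$, a simple sign contradiction at an interior maximum is not available, and one must genuinely rule out $M_T\equiv1$ via an ODE-uniqueness argument (the constant function $1$ solves \eqref{eqn4.3}, and the Cauchy data at $z_0$ coincide with it). A secondary technical point is that the suprema of $M_T$ and $I_T$ could a priori be approached only in the limits $z\to\pm\infty$ rather than attained; this is handled by the prescribed boundary limits ($0$ and $b(\sigma)\sigma<1$, respectively $0$ and $1$), which show the relevant limiting values are strictly below the claimed bound or equal to it only at $-\infty$, so any violation of the strict inequality must occur at a genuine interior extremum. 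I would also need to confirm $b(\sigma)\sigma<1$ explicitly, which follows since $b(\sigma)\sigma = \beta_2\sigma/(\beta_2\sigma+\alpha_2) < 1$ as $\alpha_2>0$. Everything else is routine application of the strong maximum principle for the second-order ODEs, analogous to the proof of Proposition~\ref{prop_4.1} in Appendix~\ref{app_B}.
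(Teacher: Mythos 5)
Your proposal is correct and matches the paper's proof essentially step for step: for $M_T<1$ the paper also argues by ODE uniqueness against the constant solution $M_T\equiv 1$ (noting that this is "exactly the strategy" used in the proof of Proposition~\ref{prop_4.1}), and for $I_T<b(\sigma)\sigma$ the paper likewise takes an interior maximum $z^*$ with $I_T'(z^*)=0$, $I_T''(z^*)\le 0$, and derives a sign contradiction from \eqref{eqn4.4} using $M_T(z^*)<1$ and $I_T(z^*)>0$. The small supplementary observations you add (that $b(\sigma)\sigma<1$, and that any violation must be attained at an interior extremum given the boundary limits) are consistent with, and slightly more explicit than, the paper's treatment.
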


\hfill\break and monotonicity:\\
\begin{prop}\label{prop_4.3}
  Let $(M_T,I_T):\mathbb{R}\to [0,1]^2$ be a [FPTW]. Then both $M_T(z)$ and $I_T(z)$ are strictly monotone decreasing for all $z\in \mathbb{R}$.   
\end{prop}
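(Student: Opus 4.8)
\textbf{Proof plan for Proposition \ref{prop_4.3}.}

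The plan is to establish strict monotonicity of both components of a [FPTW] by exploiting the sign structure imposed by the boundary conditions, the positivity and upper bounds already proved in Propositions \ref{prop_4.1} and the preceding bound, together with repeated applications of the strong maximum principle to the two ODEs in \eqref{eqn:EVP1}. First I would deal with $M_T$. Differentiating \eqref{eqn4.3} gives a linear second-order ODE for $W := M_T'$ of the form $W'' + vW' + g(z)W = h(z)$ with $g(z) = I_T(1-2M_T)$ and forcing $h(z) = -I_T' M_T(1-M_T)$; if we can first show $I_T' \le 0$ then $h \ge 0$ (using $0 < M_T < 1$ from the earlier propositions), so $W$ cannot attain a non-negative interior maximum unless it is constant, and combined with $W \to 0$ as $z \to \pm\infty$ (which follows from the limits in \eqref{eqn4.5} and the asymptotic analysis of the linearised problem at the two equilibria) this forces $W = M_T' < 0$ on all of $\mathbb{R}$, or $W \equiv 0$, the latter being incompatible with the distinct endpoints. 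So the argument is naturally \emph{coupled}: I would prove the two monotonicity statements together.

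The cleanest route is a bootstrap. Since $(M_T,I_T) \to (1,b(\sigma)\sigma)$ as $z \to -\infty$ and $\to (0,0)$ as $z \to +\infty$, and both stay in $[0,1]^2$, standard comparison/sliding arguments (or a direct phase-space argument near the hyperbolic endpoints, using the eigenstructure of the linearisations computed implicitly in Section \ref{sec_IVP}) show that $M_T' < 0$ and $I_T' < 0$ near $z = \pm\infty$. I would then argue that the set where $M_T' = 0$ is empty: at an interior zero $z_0$ of $M_T'$ we have $M_T''(z_0) = -I_T(z_0)M_T(z_0)(1 - M_T(z_0)) < 0$ from \eqref{eqn4.3} (strict because $0 < M_T < 1$ and $I_T > 0$), so every interior critical point of $M_T$ is a strict local maximum; but a function tending to $1$ at $-\infty$ and to $0$ at $+\infty$ that has only strict local maxima as critical points can have no critical point at all (between any two it would need a local minimum). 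Hence $M_T' < 0$ everywhere. The same idea applied to \eqref{eqn4.4}: rewrite it as $I_T'' + \overline{v}I_T' = -\overline{\delta}^{-1} f(M_T,I_T)$, and note from \eqref{eqn1.17} and the established bounds $0 < M_T < 1$, $0 < I_T < b(\sigma)\sigma$ that on a [FPTW] the sign of $f(M_T,I_T)$ is controlled by the factor $b(\sigma)(\sigma - 1) + b(\sigma)M_T - I_T$; one checks that along the wave $I_T$ lies strictly above the slow manifold value $\max(0, b(\sigma)(M_T + \sigma - 1))$ on the decreasing side and below it otherwise, so that at an interior zero of $I_T'$ the second derivative $I_T''$ is strictly signed, again ruling out interior critical points the same way.

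The main obstacle is the coupling: the sign of $f(M_T,I_T)$ depends on the unknown relative position of $(M_T,I_T)$ with respect to the curve $i = \max(0, b(\sigma)(m + \sigma - 1))$, and one cannot a priori assume the orbit stays on one side. I would handle this by a connectedness argument: the set $Z = \{z : M_T'(z) < 0 \text{ and } I_T'(z) < 0\}$ is open, nonempty (it contains neighbourhoods of $\pm\infty$ by the endpoint analysis), and I would show it is closed in $\mathbb{R}$ — if $z_0$ were a boundary point, then $M_T'(z_0) = 0$ or $I_T'(z_0) = 0$, and the strict-local-maximum computations above, together with the strong maximum principle applied to the linear equation satisfied by $M_T'$ (resp. $I_T'$) with the forcing term now known to have a definite sign on the interval in question, yield a contradiction. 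Hence $Z = \mathbb{R}$. A secondary technicality is justifying the limits $M_T', I_T' \to 0$ and the sign of the derivatives near the endpoints; this is routine once one linearises \eqref{eqn:EVP1} at $\mathbf{0}$ and at $\mathbf{e}_F$ and checks that the relevant eigenvalues governing the approach are real (guaranteed here for $v$ above the appropriate threshold, which is exactly the regime in which [FPTW]s exist), so the approach to each equilibrium is eventually monotone along a one-dimensional strong-(un)stable direction. I would relegate these endpoint estimates and the elementary ODE computations to Appendix \ref{app_B}, in line with the treatment of the other preliminary propositions.
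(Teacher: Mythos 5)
Your argument for $M_T$ is correct and is essentially the paper's: at any interior zero $z^*$ of $M_T'$, equation \eqref{eqn4.3} gives $M_T''(z^*) = -I_T(z^*)M_T(z^*)(1-M_T(z^*)) < 0$ by Propositions \ref{prop_4.1}--4.2, so every critical point is a strict local maximum, which is incompatible with the boundary values $1$ and $0$. Note this does not require knowing $I_T' \le 0$ first, so the coupling you worry about in your opening paragraph never actually enters the $M_T$ step, and the differentiated equation for $W = M_T'$ with forcing $h = -I_T'M_T(1-M_T)$ is a detour you can drop.

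The $I_T$ step, however, has a genuine gap, and you have correctly located it but not closed it. You observe that at an interior zero of $I_T'$ one gets $I_T'' = -\overline{\delta}^{-1}f(M_T,I_T)$, and that the sign of $f$ is not controlled a priori because the orbit's position relative to the nullcline $I = \max(0,\, b(\sigma)(M + \sigma - 1))$ is unknown. Your proposed fix --- a connectedness argument where on the boundary of $Z = \{M_T'<0,\ I_T'<0\}$ the ``forcing term [is] now known to have a definite sign'' --- does not resolve this: at a first boundary point of $Z$ you have no new information about the sign of $f$, so the strict local-max/min computation for $I_T$ cannot be invoked, and the set $Z$ cannot be shown closed this way. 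What the paper does instead is exploit the $M_T$-monotonicity, already proved, together with the geometry of the nullcline, in a \emph{two-point} argument: if $I_T$ were not monotone it would have a local minimum at some $z_1^*$ followed by a local maximum at some $z_2^* > z_1^*$ with $I_T(z_2^*) \ge I_T(z_1^*)$. At $z_1^*$, $I_T''(z_1^*) \ge 0$ forces $f(M_T(z_1^*),I_T(z_1^*)) \le 0$, placing $(M_T(z_1^*),I_T(z_1^*))$ on or above the nullcline. Since $M_T(z_2^*) < M_T(z_1^*)$ (by the first part) and $I_T(z_2^*) \ge I_T(z_1^*)$, the point at $z_2^*$ lies strictly up-and-left of the point at $z_1^*$; because the nullcline has \emph{positive slope}, this pushes $(M_T(z_2^*),I_T(z_2^*))$ strictly into the region $f < 0$, whence $I_T''(z_2^*) > 0$, contradicting the local-maximum requirement $I_T''(z_2^*) \le 0$. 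This is the missing idea: the argument needs two critical points and the monotone slope of the nullcline, not a single-point sign check or a connectedness bootstrap. You should replace your $I_T$ argument with this; the endpoint linearisation and decay of $M_T', I_T'$ that you invoke are then unnecessary for this proposition.
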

\hfill\break

\begin{remk}
    This result allows us to uniquely fix translation invariance in \eqref{eqn:EVP1} by requiring that
    \begin{equation*}
        M_T(0) = 1-\sigma, \label{eqn4.24}
    \end{equation*}
    which we adopt henceforth in this subsection.
\end{remk}

\hfill\break We now consider the existence and structure of [FPTW] solutions in the limit $\overline{\delta} \to 0$. It is convenient to use a hodograph type of approach. To this end let $(M_T,I_T):\mathbb{R}\to [0,1]^2$ be a [FPTW] solution, then using Proposition \ref{prop_4.3} we may write
\begin{equation*}
    I_T(z) = I_T(z(M_T))\equiv H(M_T)~~\text{for}~~M_T\in [0,1],  \label{eqn4.25}
\end{equation*}
where $z(M_T):[0,1]\to \mathbb{R}$ is the inverse function of $M_T(z):\mathbb{R}\to [0,1]$. It follows that
\begin{equation*}
    I_T'(z) = H'(M_T)/z'(M_T),
    \quad
    I''_T(z) = H''(M_T)/(z'(M_T))^2 - z''(M_T)H'(M_T)(z'(M_T))^3.
\end{equation*}
Thus equation (\ref{eqn4.4}) becomes,
\begin{multline}
    \overline{\delta}\left(H''(M_T)/(z'(M_T))^2 - z''(M_T)H'(M_T)(z'(M_T))^3 + \overline{v} H'(M_T)/z'(M_T)\right)\\ + \frac{a(\sigma)H(M_T)(b(\sigma)M_T - b(\sigma)(1-\sigma)-H(M_T))}{(\alpha_2H(M_T) + \beta_2(1-M_T))} = 0,~~M_T\in (0,1). \label{eqn4.28}
\end{multline}
We require $H(M_T)\in (0,b(\sigma)\sigma)$ for $M_T\in (0,1)$, via Propositions 4.1--4.3, and,
\begin{equation}
    H(0)=0,~~H(1)=b(\sigma)\sigma, \label{eqn4.29}
\end{equation}
with 
\begin{equation}
    H(M_T)~\text{increasing with}~~M_T\in (0,1). \label{eqn4.30}
\end{equation}
We now write,
\begin{equation}
    H(M_T;\overline{\delta}) = H_0(M_T) + o(1)~~\text{as}~~\overline{\delta}\to 0,  \label{eqn4.31}
\end{equation}
with $M_T\in [0,1]$. At leading order in equation (\ref{eqn4.28}), with conditions (\ref{eqn4.29}) and (\ref{eqn4.30}), we obtain,
\begin{equation}
    H_0(M_T) =
    \begin{cases}
        b(\sigma)(M_T - (1-\sigma)),~M_T\in ((1-\sigma),1]\\
        0,~M_T\in [0,(1-\sigma)].
    \end{cases}
    \label{eqn4.32}
\end{equation}
We note that there will be a thin transition region when
\begin{equation*}
    M_T = (1-\sigma) \pm o(1)~~\text{as}~~\overline{\delta}\to 0, \label{eqn4.33}
\end{equation*}
which will locally smooth out the gradient discontinuity in $H_0(M_T)$ at $M_T=(1-\sigma)$, and can only be considered after the leading order term in $M_T(z;\overline{\delta})$ has been determined. Thus, we now write,
\begin{equation*}
 M_T(z;\overline{\delta}) = \tilde{M}_0(z) + o(1)~~\text{as}~~\overline{\delta}\to 0,  \label{eqn4.34}   
\end{equation*}
with $z\in \mathbb{R}$. Equation (\ref{eqn4.3}) becomes, at leading order,
\begin{varsubequations}{MVP1}\label{eqn:MVP1}
    \begin{equation}
    \tilde{M}_0'' + v\tilde{M}_0' + F(\tilde{M}_0) = 0,~~z\in \mathbb{R},
\end{equation}
subject to,
\begin{equation}
    \tilde{M}_0(z)\in (0,1)~~\forall~~z\in \mathbb{R},
\end{equation}
\begin{equation}
    \tilde{M}_0(z)\to
    \begin{cases}
        1~~\text{as}~~z\to -\infty,\\
        0~~\text{as}~~z\to \infty,
    \end{cases}
\end{equation}
\begin{equation}
    \tilde{M}_0(0) = (1-\sigma).
\end{equation}
\end{varsubequations}
Here the reaction function $F\in \text{Lip}([0,1])\cap PC^1([0,1])$ is given by
\begin{equation*}
    F(X) = XH_0(X)(1-X)~~\forall~~X\in [0,1].  \label{eqn4.39}
\end{equation*}
For $\sigma\in (0,1)$ the problem \eqref{eqn:MVP1} is of classical FKPP-type with continuous cut-off in the reaction function at $\tilde{M}_0 = (1-\sigma)$. This can be readily analysed following directly the theory developed by some of the authors in Tisbury et al. \cite{TNT}, and details need not be repeated here. We can summarise these outcomes in the following:\\

\begin{theorem}\label{thm_4.1} \textbf{Summary for \eqref{eqn:MVP1}}\\
The problem \eqref{eqn:MVP1} has a solution, say $\tilde{M}_0^T:\mathbb{R}\to (0,1)$, if and only if $v=v^*(\sigma)>0$. This solution is unique, and monotone decreasing with $z\in \mathbb{R}$. In addition $v^*(\sigma)$ is continuous and increasing for $\sigma\in (0,1)$, with,
\begin{equation}
    v^*(\sigma)=
    \begin{cases}
        O(\sigma^{\frac{3}{2}})~~\text{as}~~\sigma\to 0^+,\\
        \sqrt{b(1)/2} - o(1)~~\text{as}~~\sigma\to 1^{-},
    \end{cases} \label{eqnC}
\end{equation}
whilst,
\begin{equation}
    \tilde{M}_0^T(z,\sigma) = (1-\sigma)\exp(-v^*(\sigma)z)~~\forall~~z\ge 0. \label{eqn4.41}
\end{equation}
\end{theorem}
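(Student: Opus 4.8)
\textbf{Proof proposal for Theorem \ref{thm_4.1}.}

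The plan is to reduce the boundary value problem \eqref{eqn:MVP1} to a known form covered by the Tisbury--Needham--Tzella analysis \cite{TNT} by first rescaling the independent variable, and then to read off the stated properties of $v^*(\sigma)$ and the explicit tail \eqref{eqn4.41} from that reduction. First I would write out $F$ explicitly using \eqref{eqn4.32}: for $X\in((1-\sigma),1]$ we have $F(X)=b(\sigma)X(X-(1-\sigma))(1-X)$, and $F(X)\equiv 0$ for $X\in[0,(1-\sigma)]$. This is a cubic reaction that vanishes (to first order, with strictly negative one-sided derivative approached from above) at the threshold $X=(1-\sigma)$, vanishes at $X=1$, and is strictly positive in between; the cut-off occurs at the positive level $1-\sigma$. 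Rescaling $X=(1-\sigma)+\sigma Y$ maps the active interval $((1-\sigma),1]$ onto $(0,1]$, and the problem becomes exactly the canonical FKPP-with-continuous-cut-off travelling wave problem analysed in \cite{TNT}, with a reaction of the form (constant depending on $\sigma)\times Y(1-Y)$ on the active side and zero on the other, plus a rescaled speed. Invoking \cite{TNT} then gives existence and uniqueness of a monotone decreasing front for a \emph{unique} admissible speed $v^*(\sigma)>0$, together with continuity and monotonicity of $v^*$ in $\sigma$.

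Next I would pin down the behaviour of $v^*(\sigma)$ in the two limits. For $\sigma\to 0^+$ the cut-off level $1-\sigma\to 1$ squeezes the active region to a vanishingly thin interval of width $\sigma$ adjacent to $X=1$; in that region $F(X)\sim b(\sigma)\,X(X-(1-\sigma))(1-X)$ is of order $\sigma^2$ (one factor $\sigma$ from $X-(1-\sigma)$ and one from $1-X$), and a standard rescaling argument (blow up $z$ by the appropriate power) shows that the selected speed scales like $\sigma^{3/2}$; I would quote the precise asymptotic constant from \cite{TNT} or derive it by the phase-plane/energy argument used there. For $\sigma\to 1^-$ the cut-off level $1-\sigma\to 0$, so the cut-off becomes inactive at leading order and \eqref{eqn:MVP1} limits to the ordinary FKPP cubic wave $\tilde M_0''+v\tilde M_0'+b(1)\tilde M_0^2(1-\tilde M_0)=0$; the well-known exact Huxley-type travelling wave for the cubic Fisher nonlinearity (cf.\ \cite{billingham1991note}) gives the limiting speed $\sqrt{b(1)/2}$, so $v^*(\sigma)\to\sqrt{b(1)/2}$ from below, consistent with monotonicity of $v^*$.

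Finally, the explicit tail \eqref{eqn4.41}: because $F\equiv 0$ on $[0,(1-\sigma)]$ and the normalisation $\tilde M_0(0)=1-\sigma$ from the Remark places $z=0$ exactly at the cut-off point, for $z\ge 0$ the front satisfies the \emph{linear} constant-coefficient ODE $\tilde M_0''+v^*\tilde M_0'=0$ with $\tilde M_0(0)=1-\sigma$ and $\tilde M_0\to 0$ as $z\to\infty$; the unique bounded decaying solution is $\tilde M_0^T(z,\sigma)=(1-\sigma)\exp(-v^*(\sigma)z)$. The main obstacle I anticipate is not the structural reduction --- that is routine once the rescaling is written down --- but extracting the sharp $\sigma\to 0^+$ asymptotics $v^*(\sigma)=O(\sigma^{3/2})$ with the correct constant: the thin-active-layer limit is singular, and care is needed to match the inner cubic-reaction layer to the outer exponential tail and to confirm that the naive scaling prediction is indeed the \emph{selected} (minimal admissible, here unique) speed rather than merely an admissible one; I would lean on the rigorous selection results in \cite{TNT} to close this gap cleanly.
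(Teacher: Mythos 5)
Your proposal follows essentially the same route as the paper, whose proof is a one‑line citation to \cite{TNT}: that reference supplies the existence, uniqueness, monotonicity, and continuity of $v^*(\sigma)$, and your derivations of the exact tail \eqref{eqn4.41} (from $F\equiv 0$ on $[0,1-\sigma]$ together with the normalisation $\tilde M_0(0)=1-\sigma$, so the ODE is linear ahead of $z=0$) and of the $\sigma\to 1^-$ limit (cut‑off disappears, leaving the cubic Fisher minimum speed $\sqrt{b(1)/2}$) are both correct and fill in exactly what the paper leaves to the reader.

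One inaccuracy worth correcting: the substitution $X=(1-\sigma)+\sigma Y$ gives
\begin{equation*}
F(X)=b(\sigma)\,\sigma^2\bigl[(1-\sigma)+\sigma Y\bigr]\,Y(1-Y),
\end{equation*}
so the rescaled active reaction is \emph{not} a constant multiple of $Y(1-Y)$; the $Y$‑dependent prefactor $(1-\sigma)+\sigma Y$ tends to $Y$ as $\sigma\to 1^-$, turning the nonlinearity cubic. Consequently the appeal to \cite{TNT} should be phrased as invoking their treatment of a general KPP‑type reaction with a continuous cut‑off — which \eqref{eqn:MVP1} already is, with no rescaling needed — rather than as an exact reduction to the canonical quadratic cut‑off problem. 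Your heuristic for the $\sigma\to 0^+$ scaling is right in outcome, but as you note the clean argument is the phase‑plane blow‑up; the paper performs this in Appendix~\ref{app_C}, where the scalings $\alpha=1-\sigma X$, $\beta=\sigma^{3/2}Y$, $v^*=\sigma^{3/2}V$ produce the sharper result $v^*(\sigma)=\sqrt{b(\sigma)/3}\,\sigma^{3/2}+O\bigl(b(\sigma)^{1/2}\sigma^2\bigr)$.
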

\begin{proof}
    This follows directly the development in \cite{TNT}. 
\end{proof}

\hfill\break Using  matched asymptotic expansions (see Appendix \ref{app_C} for details) 
we develop the detailed asymptotic structure to   problem \eqref{eqn:MVP1} which yields 
  \begin{figure}[t]
      \centering
      \includegraphics[width=0.5\linewidth]{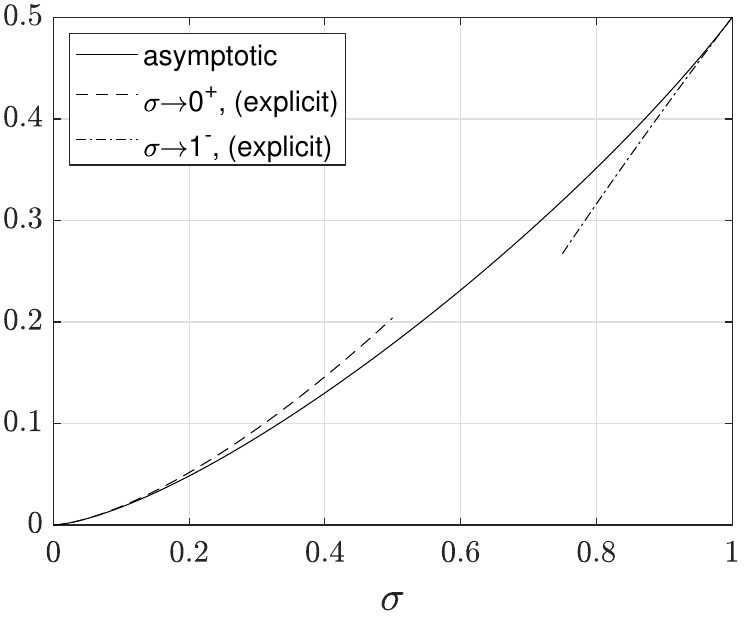}
      \caption{Propagation speed $v^{*}(\sigma)$ obtained   by solving \eqref{eqn:EVP1} numerically via a shooting method (solid line) and   the explicit asymptotic expressions provided in \eqref{eqn:vstar_sigma_asym} for $v^{*}(\sigma)$ (dashed lines). Here, $\alpha_2=\beta_2=1$ so that $b(\sigma)=(1+\sigma)^{-1}$.}
      \label{fig:NumericalComparison1D}
  \end{figure}
\begin{equation}\label{eqn:vstar_sigma_asym}
        v^*(\sigma) =
        \begin{cases}
        \sqrt{b(\sigma)/3}\sigma^{3/2}+O(b(\sigma)^{1/2}\sigma^{2})~~\text{as}~~\sigma\to 0^+,\\
b(\sigma)^\frac{1}{2}\left(\frac{1}{\sqrt{2}}-\sqrt{2}(1-\sigma)\right)+O(b(\sigma)^{1/2}(1-\sigma)^2)\text{ as }\sigma\rightarrow{1}^{-}
        \end{cases}
    \end{equation}
Figure \ref{fig:NumericalComparison1D} compares numerical results for $v^*(\sigma)$ (obtained via the numerical solution of \eqref{eqn:MVP1} employing a shooting method; see \cite{TNT} for more details) against 
the explicit asymptotic expressions in (\ref{eqn:vstar_sigma_asym}) for the particular case of $\alpha_2=\beta_2=1$ corresponding to $b(\sigma)=(1+\sigma)^{-1}$.
The agreement between the numerical results and the explicit asymptotic expressions is excellent as $\sigma\to 0^+$ and $\sigma\to 1^-$.

Henceforth we set $v=v^*(\sigma)$ and now return to the transition region for $H(M_T;\overline{\delta})$. In the transition region we write,
\begin{equation}
    M_T = (1-\sigma) + \overline{\delta}^r m , \label{eqn4.42}
\end{equation}
with $m=O(1)$, and $r>0$ to be determined. It follows from (\ref{eqn4.31}) and (\ref{eqn4.32}) that we have,
\begin{equation}
H(M_T;\overline{\delta}) = O(\overline{\delta}^r),    
\end{equation}
as $\overline{\delta}\to 0$ in the transition region. Thus we write,
\begin{equation}
H(M_T;\overline{\delta}) =\overline{H}(m) \overline{\delta}^r + o(\overline{\delta}^r). \label{eqn4.42'}   
\end{equation}
Also, via (\ref{eqn4.41}) and (\ref{eqn4.42}), we have,
\begin{equation*}
    z_T(m) = - \frac{m}{(1-\sigma)v^*(\sigma)} \overline{\delta}^r + o(\overline{\delta}^r),
\end{equation*}
which gives,
\begin{equation*}
    z_T'(m) = - \frac{1}{(1-\sigma)v^*(\sigma)} + o(1)~\text{and}~z_T''(m) = o(1).
\end{equation*}
A nontrivial balance in equation (\ref{eqn4.28}) then requires,
\begin{equation}
    r = 1/3, \label{eqn4.42''}
\end{equation}
after which the leading order equation becomes,
\begin{varsubequations}{TBP}\label{TBP}
\begin{equation}
    \overline{H}'' + c(\sigma) \overline{H} (b(\sigma)m - \overline{H}) = 0,~~m\in \mathbb{R}, \label{eqn4.50}\tag{TBPa}
\end{equation}
which is subject to the matching conditions,
\begin{equation}
  \overline{H}(m)\sim b(\sigma)m~~\text{as}~~m\to \infty,\label{eqn4.51}\tag{TBPb}
\end{equation}
\begin{equation}
    \overline{H}(m)\to 0~~\text{as}~~m\to -\infty,\label{eqn4.52}\tag{TBPc}
\end{equation}
with
\begin{equation}
    \overline{H}(m)>0~~\forall~~m\in \mathbb{R}.\label{eqn4.53}\tag{TBPd}
\end{equation}
\end{varsubequations}
In the above,
\begin{equation*}
    c(\sigma) = \frac{a(\sigma)}{\beta_2 \sigma(1-\sigma)^2 v^{*2}(\sigma)}>0.
\end{equation*}
We first develop the boundary condition (\ref{eqn4.51}). Without giving details, for brevity, we obtain,
\begin{equation}
 \overline{H}(m) \sim b(\sigma)m + c_{\infty}(\sigma)\text{A}_{i}((c(\sigma)b(\sigma))^{\frac{1}{3}}m)~~\text{as}~~m\to \infty, \label{eqn4.54} 
\end{equation}
with $c_{\infty}(\sigma)$ a globally determined constant and $A_i(\cdot)$ the usual Airy function. Similarly developing the boundary condition (\ref{eqn4.52}) we obtain,
\begin{equation}
    \overline{H}(m) \sim c_{-\infty}(\sigma)\text{A}_{i}(-(c(\sigma)b(\sigma))^{\frac{1}{3}}m)~~\text{as}~~m\to -\infty, \label{eqn4.55}
\end{equation}
with $c_{-\infty}(\sigma)$ a positive globally determined constant. It follows from  symmetry in the problem that $c_{\infty}(\sigma)=c_{-\infty}(\sigma)$. We can use the form (\ref{eqn4.55}) to develop a shooting method for (TBP) using $c_{-\infty}$ as the shooting parameter. A numerical implementation of this method demonstrates that, for each $\sigma\in (0,1)$, \eqref{TBP} has a unique solution, and this solution is monotone increasing in $m$ (see the end of Section \ref{sec_numerics}). 

We finally move into the region where 
\begin{equation*}
    M_T\in [0,(1-\sigma) - O(\overline{\delta}^{\frac{1}{3}})),
\end{equation*}
 when it follows from (\ref{eqn4.42'}), (\ref{eqn4.42''}) and (\ref{eqn4.55}) (with $m\gg1$) that we must write,
 \begin{equation}
     H(M_T;\overline{\delta}) = \exp\left(-\frac{\Phi(M_T;\overline{\delta})}{\sqrt{\overline{\delta}}}\right), \label{4.55'}
 \end{equation}
and expand in the form,
\begin{equation}
 \Phi(M_T;\overline{\delta}) = \Phi_0(M_T) + O(\overline{\delta}\text{log}\overline{\delta})~~\text{as}~~\overline{\delta}\to 0, \label{4.55''}  
\end{equation}
with $M_T\in [0,(1-\sigma) - O(\overline{\delta}^{\frac{1}{3}}))$. Substitution in equation (\ref{eqn4.28}) gives, at leading order,
\begin{equation}
    (\Phi_0')^2 = \frac{a(\sigma)b(\sigma)((1-\sigma) - M_T)}{v^{*}(\sigma)^2\beta_2M_T^2(1-M_T)},~~M_T\in (0,(1-\sigma)). \label{eqn4.60}
\end{equation}
The solution to this equation, subject to matching with the expansion in the transition region as $M_T\to (1-\sigma)$, is readily obtained as,
\begin{equation}
    \Phi_0(M_T) = (1-\sigma)\sqrt{(\sigma c(\sigma))}\int_{M_T}^{(1-\sigma)}{\frac{\sqrt{((1-\sigma)-s)}}{s(1-s)}}ds \label{4.55'''}
\end{equation}
for $M_T\in(0,(1-\sigma))$. We readily determine from this that,
\begin{equation}
    \Phi_0(M_T) \sim 
    \begin{cases}
        \frac{2}{3}\sqrt{c(\sigma)} ((1-\sigma) - M_T)^{\frac{3}{2}}~~\text{as}~~M_T\to (1-\sigma)^-,\\
        \sqrt{\sigma(1-\sigma)^3c(\sigma)}|\text{log}(M_T)|~~\text{as}~~M_T\to 0^+.
    \end{cases} \label{4.55''''}
\end{equation}
We note that the lower limit gives, via (\ref{4.55'}),(\ref{4.55''}), (\ref{4.55'''}) and (\ref{4.55''''}), 
\begin{equation*}
    H(M_T;\overline{\delta}) \sim M_T^{\Gamma(\sigma)/{\overline{\delta}}^{\frac{1}{2}}}~~\text{as}~~M_T\to 0^+,
\end{equation*}
with $\Gamma(\sigma) = \sqrt{\sigma(1-\sigma)^3c(\sigma)}$.

We can now put the leading order forms in each region together to obtain the uniform approximation,
\begin{equation}
    H(M_T;\overline{\delta})\sim
    \begin{cases}
     b(\sigma)(M_T - (1-\sigma)),~M_T\in ((1-\sigma) + O(\overline{\delta}^{\frac{1}{3}}),1],\\
     \overline{H}((M_T - (1-\sigma))\overline{\delta}^{-\frac{1}{3}}) \overline{\delta}^{\frac{1}{3}},~M_T\in ((1-\sigma) - O(\overline{\delta}^{\frac{1}{3}}),(1-\sigma) + O(\overline{\delta}^{\frac{1}{3}})),\\
     \exp\left(-\frac{\Phi(M_T;\overline{\delta})}{\sqrt{\overline{\delta}}}\right),~M_T\in [0,(1-\sigma) - O(\overline{\delta}^{\frac{1}{3}})), \label{eqn4.64}
    \end{cases}
\end{equation}
as $\overline{\delta}\to 0$, uniformly for $M_T\in [0,1]$. Thus, finally we have
\begin{equation}
    I_T(z;\overline{\delta}) \sim H(\tilde{M}_0^T(z);\overline{\delta}), \label{eqn4.65}
\end{equation}
\begin{equation}
    M_T(z;\overline{\delta})\sim \tilde{M}_0^T(z), \label{eqn4.66}
\end{equation}
as $\overline{\delta}\to 0$, uniformly for $z\in \mathbb{R}$, and with $v\sim v^*(\sigma)$ as $\overline{\delta} \to 0$. It should be noted from (\ref{eqn4.64})--(\ref{eqn4.66}) that, whilst,
\begin{equation*}
  M_T(z;\overline{\delta}) = O(1),~~z\in \mathbb{R},   
\end{equation*}
we have,
\begin{equation*}
 I_T(z;\overline{\delta}) = 
 \begin{cases}
     O(1),~~z\in (-\infty,-O(\overline{\delta}^{\frac{1}{3}})),\\
     O(\overline{\delta}^{\frac{1}{3}}),~~z\in(-O(\overline{\delta}^{\frac{1}{3}}),O(\overline{\delta}^{\frac{1}{3}})),\\
     O(E(\overline{\delta})),~~z\in (O(\overline{\delta}^{\frac{1}{3}}),\infty),
 \end{cases}
\end{equation*}
as $\overline{\delta} \to 0$, with $E(\overline{\delta})$ denoting terms which are exponentially small in $\overline{\delta}$. In fact, we have established, via the method of matched asymptotic expansions, the following (with notation as introduced earlier in this subsection):\\
\begin{theorem}
    For each $\delta$ sufficiently small and $\sigma\in [\delta,1)$, there exists a unique [FPTW], and this has propagation speed $v\sim v^*(\sigma)$ as $\delta\to 0$. The [FPTW] is given by $M_T\sim \tilde{M}_0^T(z,\sigma)$ and $I_T\sim H(\tilde{M}_0^T(z,\sigma);\overline{\delta})$ for all $z\in \mathbb{R}$, as $\delta\to 0$. Here $v^*(\sigma)$ is defined and continuous for $\sigma\in (0,1)$, is increasing with $\sigma$, and has limits,
    \begin{equation*}
        v^*(\sigma) =
        \begin{cases}
            O(\sigma^{\frac{3}{2}})~~\text{as}~~\sigma\to 0^+,\\
            \sqrt{b(1)/2}- o(1)~~\text{as}~~\sigma\to 1^-.
        \end{cases}
    \end{equation*}
    In addition,
    \begin{equation*}
        \tilde{M}_0^T(z,\sigma) = (1-\sigma)\exp(-v^*(\sigma)z)~~\forall~~z\ge0.
    \end{equation*}
\end{theorem}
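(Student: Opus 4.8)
The plan is to assemble the theorem from the structural results already established in this subsection, treating separately existence, uniqueness, the identification of the speed, and the properties of $v^*(\sigma)$. For existence I would verify that the functions produced by the matched asymptotic construction do satisfy \eqref{eqn:EVP1} to the claimed leading order. Concretely, take $\tilde{M}_0^T(\cdot,\sigma)$ to be the unique monotone decreasing solution of \eqref{eqn:MVP1} at $v=v^*(\sigma)$ guaranteed by Theorem \ref{thm_4.1}, and set $I_T:=H(\tilde{M}_0^T(z);\overline{\delta})$ with $H$ the uniform composite \eqref{eqn4.64} assembled from the outer branch $H_0$ in \eqref{eqn4.32}, the transition profile $\overline{H}$ solving \eqref{TBP}, and the WKB tail with $\Phi_0$ as in \eqref{4.55'''}. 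One then checks that $(M_T,I_T)=(\tilde{M}_0^T,I_T)$ satisfies \eqref{eqn4.3} at leading order (since $\tilde{M}_0^T$ solves \eqref{eqn:MVP1} and $F(X)=XH_0(X)(1-X)$ reproduces $I_TM_T(1-M_T)$ at leading order) and \eqref{eqn4.4} with residual $o(1)$ in each of the three regions, the residuals matching across the overlap zones $M_T=(1-\sigma)\pm O(\overline{\delta}^{1/3})$ and $M_T=O(\overline{\delta}^{1/3})$. The limits \eqref{eqn4.5} follow from $H(0)=0$, $H(1)=b(\sigma)\sigma$ and the endpoint behaviour of $\tilde{M}_0^T$, while positivity and the $[0,1]^2$ containment follow from \eqref{eqn4.53} and Theorem \ref{thm_4.1}.

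For uniqueness, the argument is that any [FPTW] is forced into exactly this structure. By Propositions \ref{prop_4.1}--\ref{prop_4.3}, any [FPTW] is positive, bounded above by $(1,b(\sigma)\sigma)$, and strictly monotone decreasing, so the hodograph substitution is legitimate and $I_T=H(M_T)$ with $H$ solving \eqref{eqn4.28} under \eqref{eqn4.29}--\eqref{eqn4.30}. Expanding in $\overline{\delta}$, the leading-order balance forces $v\to v^*(\sigma)$ by solvability of \eqref{eqn:MVP1} (which by Theorem \ref{thm_4.1} admits a solution only at $v=v^*(\sigma)$), forces $H_0$ as in \eqref{eqn4.32}, forces the transition scaling $r=1/3$ and the layer problem \eqref{TBP} (whose solution is unique and monotone increasing by the shooting analysis of \eqref{eqn4.50}), and forces the outer WKB form with $\Phi_0$ the unique solution of \eqref{eqn4.60} matching the layer. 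Since $\tilde{M}_0^T$ is itself unique by Theorem \ref{thm_4.1}, every leading-order piece is pinned down, yielding uniqueness of the [FPTW] to leading order as $\overline{\delta}\to0$.

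The asserted properties of $v^*(\sigma)$ — its definition and continuity on $(0,1)$, its monotone increase, and the endpoint behaviour $v^*(\sigma)=O(\sigma^{3/2})$ as $\sigma\to0^+$ and $v^*(\sigma)=\sqrt{b(1)/2}-o(1)$ as $\sigma\to1^-$ — are precisely the content of Theorem \ref{thm_4.1}, imported from \cite{TNT} and refined by the matched expansions \eqref{eqn:vstar_sigma_asym} of Appendix \ref{app_C}; I would simply cite these. The explicit tail $\tilde{M}_0^T(z,\sigma)=(1-\sigma)\exp(-v^*(\sigma)z)$ for $z\ge0$ is the exact solution of \eqref{eqn:MVP1} on the cut-off set $\{\tilde{M}_0\le(1-\sigma)\}$, where $F\equiv0$, so $\tilde{M}_0''+v^*\tilde{M}_0'=0$ subject to $\tilde{M}_0(0)=1-\sigma$ and decay at $+\infty$, which integrates to the stated exponential.

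The step I expect to be the main obstacle is making the uniqueness claim genuinely rigorous: the construction above is a \emph{formal} matched asymptotic expansion, and upgrading ``any [FPTW] has this leading-order form'' to a theorem requires either controlling the remainder in each region by a contraction or implicit-function argument in suitably weighted spaces and gluing across the overlaps, or invoking the geometric singular perturbation and exchange-lemma machinery to show that the relevant heteroclinic orbit in the $(M_T,M_T',I_T,I_T')$ phase space is unique for all small $\overline{\delta}$. In particular, the uniqueness and monotonicity of the solution of the transition-layer problem \eqref{TBP} is, in the text, established only numerically, so a fully rigorous treatment would need that gap closed by a phase-plane or comparison argument for \eqref{eqn4.50} with the Airy matching conditions \eqref{eqn4.54}--\eqref{eqn4.55}. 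Absent that, I would present the theorem as the consolidation of the preceding construction, at the same level of rigour as \cite{TNT}.
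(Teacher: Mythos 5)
Your proposal reconstructs the paper's own argument essentially verbatim: the theorem is, in the paper, stated as a consolidation of the matched-asymptotic development of Section 4.1 (Propositions 4.1--4.3, the hodograph substitution, Theorem 4.1 for \eqref{eqn:MVP1} imported from \cite{TNT}, the transition-layer problem \eqref{TBP}, and the WKB outer region), with no further proof given. You also correctly flag that this is a formal construction and that uniqueness of the \eqref{TBP} solution is verified only numerically in the text, which is an accurate reading of the paper's level of rigour.
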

\hfill\break
\begin{remk}
 We observe that the propagation speed and front thickness scales for these [FPTW] solutions are both $O(1)$ in $\delta$, as $\delta\to 0$.
\end{remk}

\hfill\break We now move on to consider the case when $\sigma=1$.

\subsection{Travelling waves when \texorpdfstring{$\sigma=1$}{}}
Again we consider [FPTW], as specified in Definition 4.1, which again satisfy \eqref{eqn:EVP1}, now with $\sigma=1$. In this case there are the simplifications,
\begin{equation*}
    \mathbf{e}_F = (1,b(1)),
\end{equation*}
\begin{equation*}
  f(M_T,I_T) = \frac{a(1)I_T(b(1)M_T - I_T)}{(\alpha_2I_T + \beta_2(1-M_T))}. 
\end{equation*}
It is also straightforward to conclude that Proposition \ref{prop_4.1}--Proposition \ref{prop_4.3} continue to hold in this case. We now consider the existence and structure of [FPTW] solutions in the limit $\overline{\delta} \to 0$, and again follow the development in the last subsection; however we now fix translation invariance by setting $M_T(0) = 1/2$. Without repeating details, we first obtain (with notation as in the previous subsection), in this case, 
\begin{equation}
    H(M_T;\overline{\delta}) \sim b(1)M_T~~\text{as}~~\overline{\delta}\to 0,
\end{equation}
uniformly for $M_T\in [0,1]$, and then
\begin{equation}
    M_T(z;\overline{\delta}) \sim \tilde{M}_0(z)~~\text{as}~~\overline{\delta}\to 0,
\end{equation}
uniformly for $z\in \mathbb{R}$. Here $\tilde{M}_0:\mathbb{R} \to \mathbb{R}$ now satisfies the problem,
\begin{varsubequations}{MVP2}\label{eqn:MVP2}
    \begin{equation}
    \tilde{M}_0'' +v \tilde{M}_0' + b(1)\tilde{M}_0^2(1-\tilde{M}_0) = 0,~~z\in \mathbb{R},\label{MVP2a}
\end{equation}
subject to,
\begin{equation}
    \tilde{M}_0(z)\to
    \begin{cases}
        1~~\text{as}~~z\to -\infty,\\
        0~~\text{as}~~z\to \infty,
    \end{cases}\label{MVP2b}
\end{equation}
\begin{equation}
    \tilde{M}_0(z)\in (0,1)~~\forall~~z\in \mathbb{R},\label{MVP2c}
\end{equation}
\begin{equation}
    \tilde{M}_0(0) = 1/2,\label{MVP2d}
\end{equation}
\end{varsubequations}

This problem, often referred to as the cubic Fisher problem, has been studied in detail in \cite{billingham1991note}, and we need only summarise the results here. These are as follows:\\
\begin{theorem}\label{thm_4.3} \textbf{Summary for \eqref{eqn:MVP2}}\\
For each $v\ge v^*(1)=\sqrt{b(1)/2}$ the problem \eqref{eqn:MVP2} has a solution $\tilde{M}_0^T:\mathbb{R}\to (0,1)$, which is unique. The solution is strictly monotone decreasing with $z\in \mathbb{R}$. In addition, when $v=v^*(1)$,
\begin{equation*}
    \tilde{M}_0^T(z) = \frac{\exp(-v^*(1)z)}{(1 + \exp(-v^*(1)z))}~~\forall~~z\in \mathbb{R}.
\end{equation*}
 Also,
 \begin{equation*}
     \tilde{M}_0^T(z) \sim
     \begin{cases}
       \exp(-v^*(1)z),~v=v^*(1),\\
       v/(b(1)z),~v>v^*(1),
     \end{cases}
 \end{equation*}
 as $z\to \infty$, whilst,
 \begin{equation*}
     \tilde{M}_0^T(z)\sim 1 - A_{-\infty}(v)\exp((\sqrt{v^2+4b(1)} - v)/2)z),
 \end{equation*}
 as $z\to {-\infty}$, for any $v\ge v^*(1)$, with $A_{-\infty}(v)$ being a positive and globally determined constant. For each $v\in [0,v^*(1))$ \eqref{eqn:MVP2} has no solution.
\end{theorem}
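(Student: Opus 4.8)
Since \eqref{eqn:MVP2} is an autonomous second-order ODE, the plan is to pass to the equivalent planar system $X' = Y$, $Y' = -vY - b(1)X^2(1-X)$ on the $(X,Y) = (\tilde{M}_0,\tilde{M}_0')$ phase plane and to seek the heteroclinic connection from $(1,0)$ to $(0,0)$ that lies in the strip $\{0<X<1\}$. The first step is to classify the two equilibria. Linearising at $(1,0)$ gives a Jacobian with characteristic polynomial $\lambda^2 + v\lambda - b(1) = 0$, so $(1,0)$ is a hyperbolic saddle whose one-dimensional unstable manifold leaves into $\{0<X<1\}$ along the eigendirection of the positive root $\lambda_+(v) = \tfrac12(\sqrt{v^2+4b(1)}-v)$. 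By contrast, the origin is \emph{non-hyperbolic}: the quadratic vanishing of the reaction term $b(1)X^2(1-X)$ at $X=0$ produces linearised eigenvalues $0$ and $-v$, so a centre manifold is present there. This degeneracy is exactly what separates the cubic Fisher problem from the classical KPP case, and it is the origin of the two distinct tail behaviours recorded in the statement.

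Existence at the distinguished speed $v = v^*(1) = \sqrt{b(1)/2}$ I would establish directly by exhibiting the closed-form profile: substituting the logistic ansatz $\tilde{M}_0(z) = (1+e^{kz})^{-1}$ into \eqref{MVP2a} and dividing through by $\tilde{M}_0(1-\tilde{M}_0)$ reduces the equation to $k^2(1-2\tilde{M}_0) - vk + b(1)\tilde{M}_0 = 0$, which holds identically in $\tilde{M}_0$ if and only if $k^2 = b(1)/2$ and $v = k$; this yields the stated solution. For $v > v^*(1)$ the plan is a trapping/shooting argument: follow the unstable manifold of the saddle out of $(1,0)$ and show that it is confined to a wedge $\{0<X<1,\ -\mu(v)X \le Y < 0\}$ for a suitably chosen slope $\mu(v)>0$, by checking the sign of the vector field on the two wedge boundaries. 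This forces $Y<0$ throughout --- hence strict monotone decrease of $\tilde{M}_0$ --- and channels the orbit into the origin along its centre manifold. Uniqueness is then automatic, since any solution of \eqref{eqn:MVP2} must coincide with this single orbit of the one-dimensional unstable manifold, translation being pinned by \eqref{MVP2d}.

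For the non-existence claim when $0 \le v < v^*(1)$, the argument is that the unstable-manifold orbit leaving $(1,0)$ cannot reach the origin within the admissible strip: comparison with the critical trajectory (equivalently, a direct estimate monitoring where $X$ first vanishes) shows that the profile crosses $X=0$ with $Y<0$ at finite $z$ and hence becomes negative, violating \eqref{MVP2c}. The tail asymptotics I would then read off from the local structure at each end. As $z\to-\infty$ the orbit approaches the saddle $(1,0)$ along its unstable eigendirection, giving $\tilde{M}_0^T(z) \sim 1 - A_{-\infty}(v)\exp(\lambda_+(v)z)$ with $\lambda_+(v) = \tfrac12(\sqrt{v^2+4b(1)}-v)$ and $A_{-\infty}(v)>0$ globally determined. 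As $z\to+\infty$ the centre-manifold reduction at the origin gives $Y = -\tfrac{b(1)}{v}X^2 + o(X^2)$, whence $X' \sim -\tfrac{b(1)}{v}X^2$ and the algebraic decay $\tilde{M}_0^T(z) \sim v/(b(1)z)$ for $v > v^*(1)$; at $v = v^*(1)$ the connection instead enters the origin along the strong-stable direction with eigenvalue $-v^*(1)$, producing the exponential decay $\exp(-v^*(1)z)$, consistent with the explicit solution.

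The main obstacle is the degenerate equilibrium at the origin: neither the heteroclinic connection nor the sharp threshold in admissible speeds follows from hyperbolic stable-manifold theory, so the centre-manifold analysis must be carried far enough both to confirm that the trapping wedge genuinely funnels orbits into the origin for every $v \ge v^*(1)$ and to capture the switch from exponential decay (at $v = v^*(1)$) to algebraic decay (for $v > v^*(1)$). As the statement indicates, the full details are in \cite{billingham1991note}; here it suffices to exhibit the phase-plane structure and the explicit critical-speed solution.
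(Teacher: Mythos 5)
Your phase-plane set-up is the right framework and matches the approach of the reference \cite{billingham1991note} that the paper's one-line proof cites; the explicit logistic solution at $v=v^*(1)$, the classification of $(1,0)$ as a hyperbolic saddle with $\lambda_+(v)=\tfrac12(\sqrt{v^2+4b(1)}-v)$, the non-hyperbolicity of the origin, the centre-manifold reduction $Y\sim -\tfrac{b(1)}{v}X^2$ giving the algebraic tail $v/(b(1)z)$, and the uniqueness-via-one-dimensional-unstable-manifold argument are all correct. The one step that would fail as written is the trapping region. On the boundary $Y=-\mu X$ of your proposed wedge, the inward flux is $X\bigl(-\mu^2+v\mu-b(1)X(1-X)\bigr)$, so forward-invariance for all $X\in(0,1)$ requires $\mu^2-v\mu+b(1)/4\le 0$, which has a real root $\mu$ only when $v\ge\sqrt{b(1)}=\sqrt{2}\,v^*(1)$. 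Thus no straight wedge confines the orbit on the interval $v^*(1)\le v<\sqrt{2}\,v^*(1)$ — precisely the regime near the selected speed, where the cubic reaction $b(1)X^2(1-X)$ is degenerate at the origin and the front is pushed rather than pulled, so there is no linear barrier to lean on.

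The repair is to use the curved barrier furnished by your own explicit solution: the critical orbit $Y=-v^*(1)X(1-X)$. On that parabola one finds, using $v^*(1)^2=b(1)/2$, that $\dfrac{d}{dz}\bigl(Y+v^*(1)X(1-X)\bigr)=X(1-X)\,v^*(1)\bigl(v-v^*(1)\bigr)$, which is nonnegative exactly for $v\ge v^*(1)$; hence the region between $Y=0$ and this parabola is forward-invariant for every $v\ge v^*(1)$ and funnels the saddle's unstable manifold into the origin, while for $v<v^*(1)$ the same identity pushes the orbit strictly below the parabola, preventing approach to $(0,0)$ along either the centre or strong-stable manifold (both of which lie above the parabola for small $X$) and forcing exit through $X=0$ with $Y<0$. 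This is the sharp-threshold mechanism in Billingham and Needham's proof. With the linear wedge replaced by this parabolic barrier, your argument is a faithful reconstruction of the cited proof.
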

\begin{proof}
    After a simple coordinate scaling, this follows directly from Billingham and Needham \cite{billingham1991note}.
\end{proof}
\hfill\break Thus, we have established the following for \eqref{eqn:EVP1} when $\sigma=1$:\\
\begin{theorem}
    For each $\delta$ sufficiently small and $\sigma=1$, there exists a unique (up to translation) [FPTW] for each propagation speed $v\ge v_m(\delta)$, with $v_m(\delta)\sim \sqrt{b(1)/2}$ as $\delta\to 0$. This is given by $M_T\sim \tilde{M}_0^T(z)$ and $I_T\sim b(1)\tilde{M}_0^T(z)$ as $\delta\to 0$ uniformly for $z\in \mathbb{R}$. There are no [FPTW] solutions at any propagation speed $v\in [0,v_m(\delta))$. 
\end{theorem}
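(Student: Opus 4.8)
The plan is to mirror the hodograph plus matched-asymptotic construction of Section~\ref{sec_4.1}, now specialised to $\sigma=1$, the essential simplification being that the leading-order reduction no longer produces a cut-off nonlinearity or an interior transition layer. First I would record that Propositions~\ref{prop_4.1}--\ref{prop_4.3} carry over (as already observed in the text), so that any [FPTW] $(M_T,I_T)$ of \eqref{eqn:EVP1} with $\sigma=1$ has $M_T,I_T$ strictly positive, strictly monotone decreasing, bounded above by $1$ and $b(1)$ respectively, and translation invariance may be fixed by $M_T(0)=1/2$. Monotonicity of $M_T$ lets us write $I_T(z)=H(M_T(z))$ with $H:[0,1]\to[0,b(1)]$, $H(0)=0$, $H(1)=b(1)$, $H$ increasing, and convert \eqref{eqn4.4} into the hodograph equation \eqref{eqn4.28} evaluated at $\sigma=1$, for which the term $b(\sigma)(\sigma-1)$ drops out.

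Next I would expand $H(M_T;\overline{\delta})=H_0(M_T)+o(1)$ as $\overline{\delta}\to0$. The leading-order balance in \eqref{eqn4.28} with $\sigma=1$ is the purely algebraic relation $H_0(b(1)M_T-H_0)=0$, which, since $H_0>0$ on $(0,1)$, forces $H_0(M_T)=b(1)M_T$ uniformly on $[0,1]$; in particular $H_0$ is smooth, increasing, and consistent with the endpoint data and with $H_0\in(0,b(1))$ on $(0,1)$, so no transition region near an interior point arises. A routine check then confirms that the correction is a regular $O(\overline{\delta})$ expansion with no boundary layers at $M_T=0$ or $M_T=1$, in contrast with Section~\ref{sec_4.1}. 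Substituting $I_T\sim b(1)M_T$ into \eqref{eqn4.3} reduces the mutant equation, at leading order, to the scalar cubic-Fisher problem \eqref{eqn:MVP2} for $\tilde M_0$.

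I would then invoke Theorem~\ref{thm_4.3} (Billingham and Needham \cite{billingham1991note}): \eqref{eqn:MVP2} has a solution $\tilde M_0^T:\mathbb{R}\to(0,1)$, unique up to translation and strictly monotone decreasing, if and only if $v\ge v^*(1)=\sqrt{b(1)/2}$, together with the stated behaviour of $\tilde M_0^T$ as $z\to\pm\infty$ (exponential decay at the leading edge when $v=v^*(1)$, algebraic decay when $v>v^*(1)$). Re-assembling, each admissible $v$ yields a [FPTW] of \eqref{eqn:EVP1} with $M_T(z;\overline{\delta})\sim\tilde M_0^T(z)$ and $I_T(z;\overline{\delta})\sim b(1)\tilde M_0^T(z)$ as $\delta\to0$, uniformly in $z$; conversely, by the reduction above every [FPTW] collapses in this limit to a solution of \eqref{eqn:MVP2}. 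Hence the admissible speeds form $[v_m(\delta),\infty)$ with $v_m(\delta)\sim v^*(1)=\sqrt{b(1)/2}$ as $\delta\to0$, and there are none with $v\in[0,v_m(\delta))$.

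The main obstacle is not the construction of profiles but making the ``only if'' half watertight: one must argue that \emph{every} [FPTW] of the full system \eqref{eqn:EVP1}, not merely the constructed ones, reduces to a solution of \eqref{eqn:MVP2} in the limit $\overline{\delta}\to0$, so that the non-existence part of Theorem~\ref{thm_4.3} transfers and so that $v_m(\delta)\sim\sqrt{b(1)/2}$. A related delicate point is the leading-edge matching: because $b(1)\tilde M_0^2(1-\tilde M_0)$ vanishes \emph{quadratically} at $\tilde M_0=0$, the linearisation at $(0,0)$ imposes no spreading-speed constraint and the minimum speed is selected by a global, nonlinear (``pushed-front'') mechanism — precisely the content of \cite{billingham1991note} — so the behaviour of $H$ as $M_T\to0^+$ must be reconciled with the exponential decay of $\tilde M_0^T$ at $v=v^*(1)$ rather than with a generic algebraic tail. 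Everything else is a transcription of the $\sigma<1$ argument with the transition-layer analysis deleted.
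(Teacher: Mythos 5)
Your proposal follows essentially the same route as the paper: carry over Propositions~\ref{prop_4.1}--\ref{prop_4.3}, apply the hodograph reduction from Section~\ref{sec_4.1} to obtain $H_0(M_T)=b(1)M_T$ at leading order (with no interior cut-off and hence no transition layer), substitute into the $M_T$-equation to arrive at the cubic-Fisher problem \eqref{eqn:MVP2}, and then appeal to Theorem~\ref{thm_4.3} (Billingham and Needham) for existence, uniqueness, monotonicity, and the minimum speed $\sqrt{b(1)/2}$. The ``only if'' and leading-edge matching concerns you flag are real formal subtleties, but they are equally implicit in the paper's own treatment, which also proceeds by matched asymptotics without a rigorous persistence argument for arbitrary [FPTW]s.
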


\hfill\break
\begin{remk}
    We again observe that the propagation speed and front thickness scales for these [FPTW] solutions are both $O(1)$ in $\delta$, as $\delta\to 0$.
\end{remk}
\hfill\break We now move to the final case with $\sigma >1$.

\subsection{Travelling waves when \texorpdfstring{$\sigma>1$}{}}
In this final case we define an [FPTW] as before. In addition we introduce:\\

 \begin{definition}
  An \textbf{Upper Transition Permanent Form Travelling Wave} solution to the system \eqref{eqn:LDS} (a [UPTW]) is a non-negative solution to \eqref{eqn:LDS}, say $(M_T,I_T):\mathbb{R}\to (\overline{\mathbb{R}}^+\times\overline{\mathbb{R}}^+)$, which depends only upon the travelling wave coordinate $z=x-vt$, where $v> 0$ is the constant propagation speed, and satisfies,
\begin{equation*}
    (M_T(z),I_T(z))\to
    \begin{cases}
    \mathbf{e}_T~~\text{as}~~z\to \infty\\
    \mathbf{e}_F~~\text{as}~~z\to -\infty
    \end{cases}
\end{equation*}
with
\begin{equation*}
(M_T(z),I_T(z)) \in [0,1]^2~~\forall~~z\in \mathbb{R}.    
\end{equation*}   
 \end{definition}
 \hfill\break together with:

 \begin{definition}
  A \textbf{Lower Transition Permanent Form Travelling Wave} solution to the system \eqref{eqn:LDS} (a [LPTW]) is a non-negative solution to \eqref{eqn:LDS}, say $(M_T,I_T):\mathbb{R}\to (\overline{\mathbb{R}}^+\times\overline{\mathbb{R}}^+)$, which depends only upon the travelling wave coordinate $z=x-vt$, where $v> 0$ is the constant propagation speed, and satisfies,
\begin{equation*}
    (M_T(z),I_T(z))\to
    \begin{cases}
    \mathbf{0}~~\text{as}~~z\to \infty\\
    \mathbf{e}_T~~\text{as}~~z\to -\infty
    \end{cases}
\end{equation*}
with
\begin{equation*}
(M_T(z),I_T(z)) \in [0,1]^2~~\forall~~z\in \mathbb{R}.    
\end{equation*}   
 \end{definition}

 \subsubsection{Existence of [LPTW]'s}
 It is now straightforward to determine that [LPTW] solutions are precisely solutions to the nonlinear eigenvalue problem (with eigenvalue $v>0$),
 \begin{varsubequations}{EVP2}\label{eqn:EVP2}
     \begin{equation}
    M_T'' + vM_T' + I_TM_T(1-M_T) = 0,~~z\in \mathbb{R}, \label{eqn4.86}
\end{equation}
\begin{equation}
    \overline{\delta}( I_T'' + \overline{v}I_T') + f(M_T,I_T) = 0,~~z\in \mathbb{R}, \label{eqn4.87}
\end{equation}
subject to
\begin{equation}
(M_T(z),I_T(z))\to
  \begin{cases}
    (0,0)~~\text{as}~~z\to \infty,\\
    (0,b(\sigma)(\sigma-1))~~\text{as}~~z\to -\infty, \label{eqn4.88}
    \end{cases}
\end{equation}
\begin{equation}
 (M_T(z),I_T(z)) \in [0,1]^2~~\forall~~z\in \mathbb{R}.  
\end{equation}
 \end{varsubequations}
To begin we have the basic preliminary result (with its proof deferred to Appendix \ref{app_B}):\\
\begin{prop}\label{prop_4.4}
    Let $(M_T,I_T):\mathbb{R}\to [0,1]^2$ be a [LPTW], then $M_T(z)=0$ and $I_T(z)>0$ for all $z\in \mathbb{R}$.
\end{prop}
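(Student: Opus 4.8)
The plan is to exploit the structure of equation \eqref{eqn4.86}, which for a [LPTW] couples $M_T$ to the strictly positive weight $I_T$, together with the boundary conditions \eqref{eqn4.88} that force $M_T \to 0$ at \emph{both} ends $z \to \pm\infty$. First I would establish that $I_T(z) > 0$ for all $z \in \mathbb{R}$: this follows exactly as in Proposition \ref{prop_4.1}, by noting that \eqref{eqn4.87} is a (scaled) second-order linear ODE for $I_T$ with coefficients depending on $M_T$ and $I_T$, so if $I_T$ vanished at some point it would, by the strong maximum principle / uniqueness for the linear ODE it satisfies (writing $f(M_T,I_T) = I_T \cdot g(M_T,I_T)$ with $g$ bounded on $[0,1]^2$), vanish identically — contradicting the limit $I_T \to b(\sigma)(\sigma-1) > 0$ as $z \to -\infty$ (recall $\sigma > 1$ here). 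This part is routine and mirrors the deferred proof of Proposition \ref{prop_4.1}.

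The substance is showing $M_T(z) \equiv 0$. Suppose not; then since $M_T \ge 0$ and $M_T \to 0$ as $z \to \pm\infty$, the continuous function $M_T$ attains a positive maximum at some finite $z_0$, where $M_T'(z_0) = 0$ and $M_T''(z_0) \le 0$. Evaluating \eqref{eqn4.86} at $z_0$ gives $M_T''(z_0) = - I_T(z_0) M_T(z_0)(1 - M_T(z_0))$. Since $I_T(z_0) > 0$ and $0 < M_T(z_0) \le 1$, the right-hand side is $\le 0$, with equality only if $M_T(z_0) = 1$; so this alone is not an immediate contradiction. The clean way around this is an energy/integral argument: multiply \eqref{eqn4.86} by $e^{vz}$ and integrate over $\mathbb{R}$. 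Using the decay of $M_T$ and $M_T'$ at both infinities (which must be verified from the ODE's asymptotic linearisation at the rest states — near $z \to \infty$ the linearisation about $(0,0)$ has reaction coefficient $I_T \to 0$, and near $z \to -\infty$ about $(0, b(\sigma)(\sigma-1))$ it has a strictly negative eigenvalue direction), the boundary terms vanish, leaving
\begin{equation*}
\int_{\mathbb{R}} e^{vz}\, I_T(z)\, M_T(z)\,(1 - M_T(z))\, dz = 0.
\end{equation*}
The integrand is non-negative (since $e^{vz} > 0$, $I_T > 0$, $M_T \in [0,1]$), so it must vanish identically, forcing $M_T(z)(1 - M_T(z)) \equiv 0$, i.e. $M_T(z) \in \{0,1\}$ for every $z$; by continuity and the boundary condition $M_T \to 0$, we conclude $M_T \equiv 0$.

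The main obstacle I anticipate is making rigorous the decay of $M_T$ and $M_T'$ at $z \to -\infty$ rapidly enough to kill the boundary term $[e^{vz} M_T']$ there — one needs the exponential rate of approach of $M_T$ to $0$ to beat $e^{-v|z|}$, which requires examining the linearisation of the full system \eqref{eqn:EVP1}-style pair at $\mathbf{e}_T$ and checking that the eigenvalue governing the $M_T$-component is genuinely negative (equivalently that $\mathbf{e}_T$ is a saddle in the travelling-wave phase space whose unstable manifold in the $M_T$ direction leaves at a rate exceeding $v$). Alternatively, this subtlety can be sidestepped entirely by running the maximum-principle argument more carefully: if $M_T \not\equiv 0$, let $z_0$ be a point of positive maximum; a phase-plane / sliding comparison with the linear equation $w'' + v w' + I_T(z) w = 0$ shows $M_T$ cannot have an interior positive maximum while decaying to $0$ on both sides unless it is identically zero, because the operator $\frac{d^2}{dz^2} + v\frac{d}{dz} + I_T(z)(1 - M_T(z))\,\cdot$ applied to $M_T$ yields $0$ with a non-negative zeroth-order coefficient, and the strong maximum principle for such operators forbids a positive interior maximum for a function vanishing at $\pm\infty$. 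I would present the maximum-principle version as primary and remark that it is the [LPTW] analogue of the fact that the only non-negative bounded entire solution of this $M_T$-equation with the stated limits is the trivial one.
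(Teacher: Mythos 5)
Your split is right: $I_T>0$ by the uniqueness argument of Proposition \ref{prop_4.1} is fine, and the substance is showing $M_T\equiv 0$. But both of your routes to $M_T\equiv 0$ have genuine gaps. In the integral argument the danger is at $z\to+\infty$, not $z\to-\infty$ as you suggest: at $-\infty$ the factor $e^{vz}\to 0$ kills any bounded $M_T'$, so that end is safe, but at $+\infty$ we have $e^{vz}\to\infty$ while $I_T\to 0$, so the $M_T$-equation linearises to $M_T''+vM_T'\approx 0$ and the admissible decaying mode is $M_T\sim Ce^{-vz}$, which gives $M_T'e^{vz}\to -Cv$. One cannot rule out $C>0$ a priori, so the boundary term does not vanish and your integral identity is not forced. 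The maximum-principle fallback is also incorrect as stated: for the operator $L=\partial_{zz}+v\partial_z+c(z)\,\cdot$ with $c\ge 0$, there is \emph{no} strong maximum principle forbidding a positive interior maximum — $u''+u=0$ with $u=\sin z$ on $(0,\pi)$ is a counterexample. The maximum principle requires $c\le 0$, the opposite sign; the dichotomy intuition you mention (both eigenvalues of the $M_T$-block at $\mathbf{e}_T$ have negative real part, so the only bounded orbit approaching $\mathbf{e}_T$ as $z\to-\infty$ is trivial) is correct, but it is not a maximum-principle statement and would need to be proved via Levinson/roughness of the dichotomy.

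The paper's proof sidesteps all of this: from $(M_T'e^{vz})'=(M_T-1)M_TI_Te^{vz}\le 0$ (using $M_T\in[0,1]$ and $I_T>0$), the function $g(z)=M_T'(z)e^{vz}$ is non-increasing, and since $e^{vz}\to 0$ while $M_T'$ is bounded as $z\to-\infty$, we have $g(z)\le\lim_{z\to-\infty}g(z)=0$, hence $M_T'(z)\le 0$ for all $z$. Combined with $M_T(-\infty)=0$ and $M_T\ge 0$, this immediately gives $M_T\equiv 0$. So the integrating factor $e^{vz}$ was the right idea — but the clean move is to exploit the monotonicity consequence starting from the $-\infty$ end, rather than integrating over the whole line or invoking a maximum principle with the wrong sign of zeroth-order coefficient.
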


\hfill\break On using Proposition \ref{prop_4.4}, then \eqref{eqn:EVP2} reduces to,
\begin{varsubequations}{MVP3}\label{eqn:MVP3}
    \begin{equation}
    J_{yy} + v'J_y + G(J) = 0,~~y\in \mathbb{R},
\end{equation}
\begin{equation}
    J(y)\to
    \begin{cases}
        1~~\text{as}~~y\to -\infty,\\
        0~~\text{as}~~y\to \infty,
    \end{cases}
\end{equation}
\begin{equation}
    J(y)\in (0,1]~~\forall~~y\in \mathbb{R}.
\end{equation}
\end{varsubequations}
Here $G:[0,\infty)\to \mathbb{R}$ is given by,
\begin{equation*}
   G(X) = \frac{X(1-X)}{(1 + \gamma(\sigma) X)}~~\forall~~X\in [0,\infty), 
\end{equation*}
and we observe that,
\begin{equation*}
    G(X)\le X~~\forall~~X\in [0,1]. \label{eqn4.98}
\end{equation*}
In addition, we have introduced,
\begin{equation*}
y=C(\sigma,\delta)z,~~J=I_T/(b(\sigma)(\sigma-1)),~~v'=\overline{v}/C(\sigma, \delta),   \label{eqn4.99}
\end{equation*}
with,
\begin{equation*}
    C(\sigma,\delta)=\sqrt{a(\sigma)b(\sigma)(\sigma-1)/(\beta_2 \overline{\delta})},~~\gamma(\sigma)=\alpha_2 b(\sigma)(\sigma-1)/\beta_2.  \label{eqn4.100}
\end{equation*}
We observe that the boundary value problem \eqref{eqn:MVP3} is
a classical FKPP normalised travelling wave problem. As such, we immediately have:\\
\begin{theorem}\label{thm_4.5} \textbf{Summary for \eqref{eqn:MVP3}}\\
For each $v'\ge 2$ the problem \eqref{eqn:MVP3} has a solution $J(\cdot,v'):\mathbb{R}\to (0,1)$, which is unique (up to translation in y). The solution is strictly monotone decreasing with $y\in \mathbb{R}$. In addition, the representative translation may be chosen so that,
 \begin{equation*}
     J(y,v') \sim
     \begin{cases}
       y\exp(-y),~v'=2,\\
       \exp(-\lambda_+(v')y),~v'>2,
     \end{cases}
 \end{equation*}
 as $y\to \infty$, whilst,
 \begin{equation*}
     J(y,v')\sim 1 - A_{-\infty}(v')\exp(\mu_-(v')y)
 \end{equation*}
 as $y\to {-\infty}$, for any $v'\ge 2$, with $A_{-\infty}(v')$ being a positive and globally determined constant. Here,
 \begin{equation*}
     \lambda_+(v') = (v' - \sqrt{(v')^2-4})/2,
\quad
 \text{and}
 \quad
     \mu_-(v') = (\sqrt{(v')^2 + 4(1+\gamma(\sigma))^{-1}} - v')/2.
 \end{equation*}
 For $v'\in [0,2)$ \eqref{eqn:MVP3} has no solutions.
\end{theorem}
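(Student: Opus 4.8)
The plan is to recognise \eqref{eqn:MVP3} as the classical monostable (Fisher--KPP) travelling-wave problem and to verify that the reaction $G$ satisfies the structural hypotheses under which the stated conclusions are standard (see, e.g., Fife \cite{Fife1979} or Volpert, Volpert and Volpert \cite{volpert1994traveling}). Indeed $G\in C^{\infty}([0,\infty))$ since $1+\gamma(\sigma)X>0$ for $X\ge0$, with $G(0)=G(1)=0$, $G(X)>0$ for $X\in(0,1)$, $G(X)<0$ for $X>1$, $G'(0)=1>0$, $G'(1)=-(1+\gamma(\sigma))^{-1}<0$, and the KPP subtangency $G(X)\le G'(0)X=X$ holding on $[0,1]$; this last property is precisely what pins the threshold speed to the linearised value $2\sqrt{G'(0)}=2$.

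The first step is to pass to the planar system $J'=P$, $P'=-v'P-G(J)$ and to look for a heteroclinic from $(1,0)$ to $(0,0)$ in the strip $\{0<J<1\}$. Linearising at $(1,0)$ gives the eigenvalues $\tfrac12\bigl(-v'\pm\sqrt{v'^2-4G'(1)}\bigr)$; since $G'(1)<0$, one of them is the positive number $\mu_-(v')=\tfrac12\bigl(\sqrt{v'^2+4(1+\gamma(\sigma))^{-1}}-v'\bigr)$ and the other is negative, so $(1,0)$ is a hyperbolic saddle whose one-dimensional unstable manifold has a branch entering $\{J<1,\,P<0\}$ tangent to the $\mu_-$-eigenvector $(1,-\mu_-(v'))$. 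Linearising at $(0,0)$ gives eigenvalues $-\lambda_+(v')$ and $-\lambda_-(v')$ with $\lambda_+(v')=\tfrac12\bigl(v'-\sqrt{v'^2-4}\bigr)\le\lambda_-(v')=\tfrac12\bigl(v'+\sqrt{v'^2-4}\bigr)$ (the statement's convention): for $v'>2$ these are real, negative and distinct, they coincide at $v'=2$, and they are complex with negative real part for $v'<2$, so $(0,0)$ is a stable node when $v'\ge2$ and a stable focus when $v'<2$.

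For $v'\ge2$ I would run the standard shooting/trapping argument. Using $G(J)\le J$ one checks that the triangle $T=\{0\le J\le1,\,-\lambda_+(v')\,J\le P\le0\}$ is positively invariant: the field points inward on $P=0$ (where $P'=-G(J)<0$), on $J=1$ (where $J'=P\le0$), and on the hypotenuse $P=-\lambda_+(v')J$ (where $\tfrac{d}{dy}(P+\lambda_+(v')J)=J-G(J)\ge0$, because $(v'-\lambda_+(v'))\lambda_+(v')=1$). Hence the branch of the unstable manifold of $(1,0)$ entering the strip stays in $T$; there $J'=P<0$, so $J$ is strictly decreasing and, being bounded below, converges, which forces $P=J'\to0$ and drives the orbit into $(0,0)$. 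This produces, for every $v'\ge2$, a strictly monotone heteroclinic, unique up to translation since the saddle's unstable manifold is one-dimensional. For $v'<2$, any orbit leaving $(1,0)$ into $\{0<J<1,\,P<0\}$ again has $J$ strictly decreasing there, so if it remained in the strip it would be forced into $(0,0)$; but $(0,0)$ is then a focus, which cannot be approached without the orbit entering $\{P>0\}$, impossible while $0<J<1$ since $P'=-G(J)<0$ on $P=0$. Hence the orbit leaves the strip through $\{J=0,\,P<0\}$, i.e. $J$ becomes negative, so \eqref{eqn:MVP3} has no solution when $v'\in[0,2)$.

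Finally I would read the tail behaviour off the linearisations along this orbit. As $y\to-\infty$ the orbit leaves $(1,0)$ along its unstable manifold, giving $J(y)=1-A_{-\infty}(v')e^{\mu_-(v')y}+o\bigl(e^{\mu_-(v')y}\bigr)$ with $A_{-\infty}(v')>0$ (positivity since $J<1$) a globally determined constant. As $y\to+\infty$ the orbit enters the node $(0,0)$ while confined to $T$, so $P/J\in[-\lambda_+(v'),0]$ there; for $v'>2$ this excludes the strong-stable (fast) eigendirection $(1,-\lambda_-(v'))$ and forces approach tangent to the slow eigendirection $(1,-\lambda_+(v'))$, whence $J(y)\sim e^{-\lambda_+(v')y}$, while for $v'=2$ the node is degenerate and the generic polynomial correction gives $J(y)\sim y\,e^{-y}$; choosing the representative translation suitably then produces the normalised forms in the statement. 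The only point needing genuine care is the invariance of $T$ for \emph{every} $v'\ge2$ — this is exactly where the subtangency $G\le G'(0)J$ enters, and where without it one would obtain only a (possibly larger) threshold speed — together with the attendant exclusion of the fast decay rate at $+\infty$; both are classical, and I would invoke the cited references rather than reproduce the details.
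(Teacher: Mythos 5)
Your proposal is correct and takes essentially the same route as the paper, which proves Theorem \ref{thm_4.5} simply by observing that \eqref{eqn:MVP3} is a classical normalised FKPP travelling-wave problem and citing Volpert \cite{volpert1994traveling}; you have filled in the standard verification of the KPP subtangency $G(X)\le G'(0)X$ on $[0,1]$ and the textbook phase-plane trapping/shooting argument that the citation stands in for. (One small typo: the unstable eigenvector at $(1,0)$ is $(1,\mu_-(v'))$, not $(1,-\mu_-(v'))$, although your description of the relevant branch entering $\{J<1,\,P<0\}$ is otherwise correct.)
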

\begin{proof}
   This follows directly from the classical FKPP travelling wave theory (see, for example, Volpert \cite{volpert1994traveling}).
\end{proof}
 \hfill\break We have now established:\\
 \begin{theorem}\label{thm:4.6}
     For each $\delta>0$ and $\sigma>1$, there exists a unique (up to translations in z) [LPTW] at each propagation speed $v\ge v_m^L(\sigma,\delta)$, given by $M_T(z,v)=0$ and $I_T(z,v)= I_0(\sigma)J(C(\sigma,\delta)z, v/(DC(\sigma,\delta)))$ for all $z\in \mathbb{R}$. Here,
     \begin{equation*}
         I_0(\sigma) = b(\sigma)(\sigma-1),
     \quad
     \text{and}
     \quad
      v_m^L(\sigma,\delta) = 2 \left(\sqrt{a(\sigma)b(\sigma)(\sigma-1)/\beta_2}\right) \sqrt{D}/\sqrt{\delta}.   
     \end{equation*}
     There are no [LPTW] solutions at any propagation speed $v\in [0,v_m^L(\sigma,\delta))$.
 \end{theorem}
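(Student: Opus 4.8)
The plan is to invoke Proposition~\ref{prop_4.4}, which forces $M_T\equiv 0$ for any [LPTW], so that the two-component eigenvalue problem \eqref{eqn:EVP2} collapses onto the single scalar equation \eqref{eqn4.87} for $I_T$, and then to recognise the latter—after the rescaling already introduced—as the classical Fisher--KPP travelling wave problem \eqref{eqn:MVP3}, whose solution set is catalogued in Theorem~\ref{thm_4.5}. The main content is then just the translation of that catalogue back into the original variables and the verification that the resulting profiles genuinely satisfy the [LPTW] definition.

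First I would make the reduction precise in both directions. Given an [LPTW] $(M_T,I_T)$, Proposition~\ref{prop_4.4} gives $M_T(z)=0$ for all $z$, so \eqref{eqn4.86} holds identically, the limits \eqref{eqn4.88} reduce to $I_T\to 0$ as $z\to\infty$ and $I_T\to b(\sigma)(\sigma-1)=I_0(\sigma)$ as $z\to-\infty$ (both components of $\mathbf 0$ and $\mathbf e_T$ having zero first entry), and \eqref{eqn4.87} becomes $\overline{\delta}(I_T''+\overline{v}I_T')+f(0,I_T)=0$ with $f(0,I)=a(\sigma)I(b(\sigma)(\sigma-1)-I)/(\alpha_2I+\beta_2)$. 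Putting $J=I_T/I_0(\sigma)$, $y=C(\sigma,\delta)z$, $v'=\overline{v}/C(\sigma,\delta)$, a direct computation—factoring $I_0(\sigma)$ out of numerator and denominator of $f$, using $\gamma(\sigma)=\alpha_2 I_0(\sigma)/\beta_2$, and choosing $C(\sigma,\delta)^2=a(\sigma)b(\sigma)(\sigma-1)/(\beta_2\overline{\delta})$ so that the reaction coefficient is normalised to unity—turns this into \eqref{eqn:MVP3} with $G(X)=X(1-X)/(1+\gamma(\sigma)X)$, $J\to 1,0$ at $\mp\infty$, and $J\in(0,1]$. Conversely, a solution $J$ of \eqref{eqn:MVP3} at admissible $v'$ yields $(M_T,I_T)=(0,I_0(\sigma)J(C(\sigma,\delta)\,\cdot\,))$ solving \eqref{eqn:EVP2} at speed $v=DC(\sigma,\delta)v'$; here one checks $(M_T,I_T)\in[0,1]^2$, which holds since $J\in(0,1)$ gives $0<I_T<I_0(\sigma)$ and $I_0(\sigma)=\beta_2(\sigma-1)/(\beta_2\sigma+\alpha_2)<1$ (equivalently $-\beta_2<\alpha_2$), so the full a priori constraint in the [LPTW] definition is met.

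Next I would apply Theorem~\ref{thm_4.5}: the function $G$ is $C^1$ on $[0,\infty)$ with $G(0)=G(1)=0$, $G>0$ on $(0,1)$ and, crucially, $G'(0)=1$, so \eqref{eqn:MVP3} is exactly a normalised Fisher--KPP problem, admitting a unique (up to translation) strictly decreasing solution for every $v'\ge 2$ and none for $v'\in[0,2)$. Translating the threshold $v'\ge 2$ through $v=DC(\sigma,\delta)v'$ gives $v\ge 2DC(\sigma,\delta)=v_m^L(\sigma,\delta)$, and inserting $C(\sigma,\delta)=\sqrt{a(\sigma)b(\sigma)(\sigma-1)/(\beta_2 D\delta)}$ (recall $\overline{\delta}=D\delta$) produces the stated closed form $v_m^L(\sigma,\delta)=2\sqrt{a(\sigma)b(\sigma)(\sigma-1)/\beta_2}\,\sqrt{D}/\sqrt{\delta}$. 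Since the correspondence is a bijection, uniqueness up to translation in $z$ and the representation $M_T(z,v)=0$, $I_T(z,v)=I_0(\sigma)J(C(\sigma,\delta)z,\,v/(DC(\sigma,\delta)))$ follow immediately, and the non-existence claim for $v\in[0,v_m^L(\sigma,\delta))$ is the contrapositive: such an [LPTW] would give a solution of \eqref{eqn:MVP3} at $v'=v/(DC(\sigma,\delta))<2$, contradicting Theorem~\ref{thm_4.5}.

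The only genuine work is bookkeeping—verifying the rescaling constants, confirming $G'(0)=1$ (which is what pins the minimal speed at $v'=2$ rather than any other value), and checking $I_0(\sigma)<1$ so that the recovered profile truly qualifies as an [LPTW]. I expect the main, though still mild, obstacle to be this last point together with making the equivalence between [LPTW]'s and solutions of \eqref{eqn:MVP3} watertight in both directions; everything downstream is a direct transcription of the classical scalar theory invoked in Theorem~\ref{thm_4.5}.
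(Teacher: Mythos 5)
Your proposal is correct and follows the same route as the paper: Proposition~\ref{prop_4.4} forces $M_T\equiv 0$, the eigenvalue problem \eqref{eqn:EVP2} then collapses to the scalar problem \eqref{eqn:MVP3} under the stated rescaling, and Theorem~\ref{thm_4.5} supplies the existence/uniqueness catalogue that is translated back through $v = D\,C(\sigma,\delta)\,v'$. The paper presents Theorem~\ref{thm:4.6} as a summary ("We have now established:") of exactly this chain, leaving implicit the small bookkeeping checks you spell out (the bijection between [LPTW]'s and solutions of \eqref{eqn:MVP3}, $G'(0)=1$ pinning the minimal speed at $v'=2$, and $I_0(\sigma)<1$ so the recovered profile lies in $[0,1]^2$); these are worthwhile additions but do not alter the argument.
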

 \hfill\break
 \begin{remk}
    We now observe that the propagation speed and front thickness scales for these [LPTW] solutions are of $O(1/\sqrt{\delta})$ and $O(\sqrt{\delta})$ in $\delta$, as $\delta\to 0$, respectively.
\end{remk}

 \subsubsection{Existence of [UPTW]'s}
  We determine that [UPTW] solutions are precisely solutions to the nonlinear eigenvalue problem (with eigenvalue $v>0$),
  \begin{varsubequations}{EVP3}\label{eqn:EVP3}
    \begin{equation}
    M_T'' + vM_T' + I_TM_T(1-M_T) = 0,~~z\in \mathbb{R}, \label{eqn4.107}  
\end{equation}
\begin{equation}
    \overline{\delta}( I_T'' + \overline{v}I_T') + f(M_T,I_T) = 0,~~z\in \mathbb{R}, \label{eqn4.108}   
\end{equation}
subject to
\begin{equation}
(M_T(z),I_T(z))\to
  \begin{cases}
    (0,b(\sigma)(\sigma-1))~~\text{as}~~z\to \infty,\\
    (1,b(\sigma)\sigma)~~\text{as}~~z\to -\infty, \label{eqn4.109}   
    \end{cases}
\end{equation}
\begin{equation}
 (M_T(z),I_T(z)) \in [0,1]^2~~\forall~~z\in \mathbb{R}. 
\end{equation}  
  \end{varsubequations}
It is straightforward to conclude that Proposition \ref{prop_4.1}--Proposition \ref{prop_4.3} continue to hold in this case for [UPTW]'s. We now consider the existence and structure of [UPTW] solutions in the limit $\overline{\delta} \to 0$, and again follow the development in the previous subsections. Equation (\ref{eqn4.108}), together with smoothness of [UPTW]'s, gives, at leading order,
\begin{equation*}
    I_T(z)\sim b(\sigma)(M_T(z) +(\sigma-1))~~\text{as}~~\overline{\delta}\to 0,  \label{eqn4.101}
\end{equation*}
uniformly for all $z\in \mathbb{R}$. On substitution into equation (\ref{eqn4.107}), after a rescaling, we arrive at the boundary value problem,
\begin{varsubequations}{MVP4}\label{eqn:MVP4}
 \begin{equation}
    M_T'' + V M_T' + M_T(1 + (\sigma-1)^{-1}M_T)(1-M_T) = 0,~~y\in \mathbb{R},
\end{equation}
subject to,
\begin{equation}
    M_T(y)\to
    \begin{cases}
        1~~\text{as}~~y\to -\infty,\\
        0~~\text{as}~~y\to \infty,
    \end{cases}
\end{equation}
\begin{equation}
    M_T(y)\in (0,1)~~\forall~~z\in \mathbb{R}.
\end{equation} 
\end{varsubequations}
Here $'=d/dy$ and,
\begin{equation*}
    y=\sqrt{b(\sigma)(\sigma-1)}z,~~V=v/\sqrt{b(\sigma)(\sigma-1)}.
\end{equation*}
Again \eqref{eqn:MVP4} is of classical FKPP type, but now without the gradient constraint on the reaction function. Following the classical FKPP travelling wave theory (see, for example, Volpert and Volpert \cite{volpert1994traveling}, and for this particular case, Leach and Needham \cite{LNBK}, Example 2.3, p. 35) we may state:\\
\begin{theorem}\label{thm_4.7} \textbf{Summary for (MVP4)}\\
For each $V\ge V_m(\sigma)$ the problem (MVP4) has a solution $M_T=M^*(\cdot,V,\sigma):\mathbb{R}\to (0,1)$, which is unique (up to translation in y). The solution is strictly monotone decreasing with $y\in \mathbb{R}$. Moreover, there exists a value $\sigma = 3/2$ such that $V_m(\sigma)>2$ for all $\sigma\in (1,3/2)$ whilst $V_m(\sigma)=2$ for all $\sigma\in [3/2,\infty)$. In fact,
\begin{equation}
    V_m(\sigma) = \sqrt{2}(\sigma-1)^{\frac{1}{2}} +  (\sqrt{2}(\sigma-1)^{\frac{1}{2}})^{-1}~~\forall~~\sigma\in (1,3/2)
\end{equation}
Also we may choose the representative translation so that, for $\sigma\in (3/2,\infty)$,
 \begin{equation}
     M^*(y,V,\sigma) \sim
     \begin{cases}
       y\exp{}(-y),~V=2,\\
       \exp{}(-\Gamma_+(V)y),~V>2,
     \end{cases}
 \end{equation}
 as $y\to \infty$, whilst, for $\sigma\in (1,3/2)$
  \begin{equation}
     M^*(y,V,\sigma) \sim
     \begin{cases}
       \exp{}(-\Gamma_-(V)y),~V=V_m(\sigma),\\
       \exp{}(-\Gamma_+(V)y),~V>V_m(\sigma),
     \end{cases}
 \end{equation}
 as $y\to \infty$, and for $\sigma= 3/2$,
 \begin{equation}
     M^*(y,V,\sigma) \sim
     \begin{cases}
       \exp{}(-y),~V=2,\\
       \exp{}(-\Gamma_+(V)y),~V>2,
     \end{cases}
 \end{equation}
 as $y\to \infty$.
 Correspondingly,
 \begin{equation}
     M^*(y,V,\sigma)\sim 1 - A_{-\infty}(V,\sigma)\exp{}(\Delta_-(V,\sigma)y)
 \end{equation}
 as $y\to {-\infty}$, for any $V\ge V_m(\sigma)$, with $A_{-\infty}(V,\sigma)$ being a positive and globally determined constant. Here,
 \begin{equation}
     \Gamma_{\pm}(V) = (V -(\pm \sqrt{V^2-4})))/2,
 \quad\text{and}\quad
     \Delta_-(V,\sigma) = (\sqrt{V^2 + 4(1+(\sigma-1)^{-1})} - V)/2.
 \end{equation}
 For $V\in [0,V_m(\sigma))$ (MVP3) has no solutions.
\end{theorem}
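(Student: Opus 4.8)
The plan is to treat \eqref{eqn:MVP4} as a scalar travelling--wave problem for a monostable cubic reaction and to reduce it, via the rescaling already recorded, to a normalised form for which the front theory is complete. Writing $\mu=(\sigma-1)^{-1}>0$, the reaction is $g(M)=M(1-M)(1+\mu M)$, with $g(0)=g(1)=0$, $g>0$ on $(0,1)$, $g'(0)=1$ and $g'(1)=-(1+\mu)=-\sigma/(\sigma-1)<0$, i.e.\ precisely the Fisher/KPP situation (possibly of pushed type). First I would invoke the classical existence, uniqueness and monotonicity theory for monotone fronts of monostable scalar reaction--diffusion equations (see Volpert and Volpert \cite{volpert1994traveling}, and, for this exact cubic, Leach and Needham \cite{LNBK}, Example 2.3): there is a minimal speed $V_m(\sigma)\ge 2\sqrt{g'(0)}=2$ such that \eqref{eqn:MVP4} admits a solution if and only if $V\ge V_m(\sigma)$, the solution is then unique up to translation in $y$, and it is strictly monotone decreasing. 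This settles the structural part of the statement, together with the non-existence for $V\in[0,V_m(\sigma))$.

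The substantive point is the explicit value of $V_m(\sigma)$ and the transition at $\sigma=3/2$. Here I would first exhibit the exact (pushed) front: seeking a phase-plane trajectory $M'=-pM(1-M)$ and substituting into $M''+VM'+g(M)=0$ yields, after dividing by $M(1-M)$, the identity $(p^{2}-Vp+1)+(\mu-2p^{2})M=0$, which forces $p^{2}=\mu/2$ and $V=p+p^{-1}$. Thus an explicit monotone front exists for every $\sigma>1$, of the form $M=(1+C^{-1}e^{py})^{-1}$, with speed $V^{*}(\sigma)=\sqrt{\mu/2}+\sqrt{2/\mu}=\sqrt{2(\sigma-1)}+\bigl(\sqrt{2(\sigma-1)}\bigr)^{-1}$ and with $M\sim C e^{-py}$ as $y\to+\infty$, where $p=1/\sqrt{2(\sigma-1)}$. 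Linearising $g$ at $M=0$ gives $M''+VM'+M=0$, whose admissible decay rates are $\Gamma_{\pm}(V)=(V\mp\sqrt{V^{2}-4})/2$ with $\Gamma_{+}(V)\Gamma_{-}(V)=1$; hence $p=\Gamma_{-}(V^{*})$ (the steep rate) precisely when $p>1$, i.e.\ $\sigma<3/2$, and $p=\Gamma_{+}(V^{*})$ (the shallow rate) when $p<1$, i.e.\ $\sigma>3/2$, with $p=1$, $V^{*}=2$ at $\sigma=3/2$. Since a front decaying at the steep rate at a speed exceeding the linear value $2$ is necessarily the minimal (pushed) front, this gives $V_m(\sigma)=V^{*}(\sigma)>2$ on $(1,3/2)$ with the minimal front decaying like $e^{-\Gamma_{-}(V)y}$; whereas for $\sigma\ge 3/2$ the $V^{*}$-front decays at the shallow rate and is merely a fast member of the continuum, so the KPP value $V_m(\sigma)=2$ is recovered. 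The part that still needs to be made rigorous is the non-existence of admissible fronts below $V^{*}$ when $\sigma<3/2$ and the existence of a speed-$2$ front when $\sigma\ge 3/2$ (note $g(M)>g'(0)M$ near $M=0$ for $\sigma\in[3/2,2)$, so this is not the naive KPP comparison); this is exactly the variational characterisation of the minimal monostable speed (Hadeler--Rothe), carried out for precisely this cubic in \cite{LNBK}, Example 2.3, to which I would appeal directly.

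The asymptotic formulae then follow by routine linearisation at the two end states. At $y\to+\infty$, $M''+VM'+M=0$ gives $M\sim C e^{-\Gamma_{+}(V)y}$ generically (the regime $V>V_m(\sigma)$, with shallow decay), and $M\sim \tilde{C}e^{-\Gamma_{-}(V)y}$ when the shallow-mode coefficient vanishes, which is exactly the pushed minimal front for $\sigma\in(1,3/2)$; at $V=2$ the double root gives the $y\,e^{-y}$ tail for $\sigma>3/2$ and the $e^{-y}$ tail at $\sigma=3/2$ (the latter since there $V^{*}=2$ and the explicit front $(1+e^{y})^{-1}$ decays like $e^{-y}$). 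At $y\to-\infty$, setting $M=1-w$ and using $g'(1)=-(1+(\sigma-1)^{-1})$ yields $w''+Vw'-(1+(\sigma-1)^{-1})w=0$, whose decaying mode is $w\sim A_{-\infty}(V,\sigma)e^{\Delta_{-}(V,\sigma)y}$ with $\Delta_{-}(V,\sigma)=\bigl(\sqrt{V^{2}+4(1+(\sigma-1)^{-1})}-V\bigr)/2$, and $A_{-\infty}>0$ by the strict monotonicity of the front. Collecting these gives the stated expressions for $\Gamma_{\pm}$ and $\Delta_{-}$.

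The one genuine obstacle is the second paragraph: pinning down $V_m(\sigma)$, and in particular proving that the minimal speed is strictly pushed (above the linear value $2$) precisely on $(1,3/2)$ — equivalently, the non-existence of slower admissible fronts — is the delicate step, the more so because the naive KPP comparison fails on $(3/2,2)$. Everything else (the continuum of fast fronts, uniqueness up to translation, strict monotonicity, and the exponential tails) is the standard monostable-front toolbox, and the cleanest route through the obstacle is to recognise \eqref{eqn:MVP4}, after the recorded rescaling, as a case already treated in \cite{LNBK}, Example 2.3.
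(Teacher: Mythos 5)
Your proof is correct and takes essentially the same route as the paper: both reduce the theorem to the classical monostable travelling-wave theory and, for the rigorous identification of $V_m(\sigma)$ (including non-existence of slower fronts on $(1,3/2)$ and the failure of the naive KPP comparison on $(3/2,2)$), appeal directly to Leach and Needham \cite{LNBK}, Example 2.3. The explicit phase-plane ansatz $M'=-pM(1-M)$ that you carry out to obtain $p=1/\sqrt{2(\sigma-1)}$, $V^*=p+p^{-1}$, and the observation that the $\sigma=3/2$ transition is precisely where $p$ crosses the double root $1$ (so the pushed tail switches from steep to shallow) is a useful filling-in of detail the paper leaves to the cited reference.
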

\hfill\break 
{\begin{remk}
    For $1<\sigma\le 3/2$, when the propagation speed takes on its minimum value $V = V_m(\sigma)$, we have the exact expression for the waveform,
    \begin{equation}
     M^*(y,V_m(\sigma),\sigma) = \left(1 + \exp{}((2(\sigma-1))^{-1/2} y)\right)^{-1}~~\forall~~y\in \mathbb{R}.   
    \end{equation}
\end{remk}}
\hfill\break Therefore, concerning [UPTW] solutions we have:\\
\begin{theorem}\label{thm:4.8}
    For each $\delta$ sufficiently small and $\sigma>1$, there exists a unique (upto translation) [UPTW] for each propagation speed $v\ge v_m^u(\sigma;\delta)$, with $v_m^u(\sigma;\delta)\sim V_m^u(\sigma)$ as $\delta\to 0$. This is given by $M_T\sim M^*(z\sqrt{b(\sigma)(\sigma-1)},v/\sqrt{b(\sigma)(\sigma-1)},\sigma)$ and $I_T\sim b(\sigma)((\sigma-1) + M^*(z\sqrt{b(\sigma)(\sigma-1)},v/\sqrt{b(\sigma)(\sigma-1)},\sigma)$ as $\delta\to 0$ uniformly for $z\in \mathbb{R}$. Here $V_m^u(\sigma)$ is continuous, and,
    \begin{equation}\label{eqn:ExplicitySpeed1}
     V_m^u(\sigma)= \sqrt{2b(\sigma)} ((\sigma-1) +  1/2)~   
    \end{equation}
    for $\sigma\in (1,3/2)$ whilst,
    \begin{equation}\label{eqn:ExplicitySpeed2}
     V_m^u(\sigma)=2\sqrt{b(\sigma)(\sigma-1)}   
    \end{equation}
    for $\sigma\in [3/2,\infty)$. 
    There are no [UPTW] solutions at any propagation speed $v\in [0,v_m^u(\sigma;\delta))$. 
\end{theorem}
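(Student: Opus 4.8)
The plan is to follow the route already used in Section~\ref{sec_4.1} and in the $\sigma=1$ case: characterise [UPTW]'s as solutions of the nonlinear eigenvalue problem \eqref{eqn:EVP3}, pass to the singular limit $\overline{\delta}\to0$ to obtain a scalar reduced travelling-wave problem, quote the classical Fisher--KPP-type theory for it (here Theorem~\ref{thm_4.7}), and finally reinstate the rescalings and invoke geometric singular perturbation theory to transfer the conclusions back to \eqref{eqn:EVP3} for small $\delta>0$. The first step is qualitative: Propositions~\ref{prop_4.1}--\ref{prop_4.3} carry over verbatim to [UPTW]'s, so any such wave satisfies $0<M_T<1$, $0<I_T<b(\sigma)\sigma$ and both components are strictly monotone decreasing on $\mathbb{R}$, whence $M_T$ may be used as an independent coordinate. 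The feature that makes this case cleaner than the [FPTW] case is that both limiting states $\mathbf{e}_T=(0,b(\sigma)(\sigma-1))$ and $\mathbf{e}_F=(1,b(\sigma)\sigma)$ lie on the exact attracting slow manifold $i=b(\sigma)(m+(\sigma-1))$; for $\sigma>1$ this manifold is smooth and normally hyperbolic uniformly across $M_T\in[0,1]$, since $f_I=-a(\sigma)I^{*}/(\alpha_2I^{*}+\beta_2(1-M_T))<0$ on it with $I^{*}=b(\sigma)(M_T+\sigma-1)$. Hence there is \emph{no} corner layer to resolve, and the standard Fenichel estimates give $I_T=b(\sigma)(M_T+(\sigma-1))+O(\overline{\delta})$ uniformly for $z\in\mathbb{R}$.

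Substituting this leading-order relation into \eqref{eqn4.107} and rescaling by $y=\sqrt{b(\sigma)(\sigma-1)}\,z$, $V=v/\sqrt{b(\sigma)(\sigma-1)}$ produces exactly the scalar boundary value problem \eqref{eqn:MVP4}, with the cubic reaction $M_T(1+(\sigma-1)^{-1}M_T)(1-M_T)$. Theorem~\ref{thm_4.7} then supplies everything about the reduced problem: a solution $M^{*}(\cdot,V,\sigma)$ exists if and only if $V\ge V_m(\sigma)$, is unique up to translation and strictly monotone decreasing; $V_m$ is continuous with $V_m(\sigma)=2$ for $\sigma\ge3/2$ and the pushed-front value $V_m(\sigma)=\sqrt{2}(\sigma-1)^{1/2}+(\sqrt{2}(\sigma-1)^{1/2})^{-1}$ for $1<\sigma<3/2$; and the stated exponential tails hold at $\pm\infty$. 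Undoing the rescaling gives $v=V\sqrt{b(\sigma)(\sigma-1)}$, hence a leading-order minimum speed $v_m^u(\sigma;\delta)\sim V_m^u(\sigma):=V_m(\sigma)\sqrt{b(\sigma)(\sigma-1)}$, which simplifies algebraically to \eqref{eqn:ExplicitySpeed1} on $(1,3/2)$ and to \eqref{eqn:ExplicitySpeed2} on $[3/2,\infty)$; the profiles are then $M_T\sim M^{*}$ and, from the slow-manifold graph, $I_T\sim b(\sigma)((\sigma-1)+M^{*})$, with continuity of $V_m^u$ inherited from that of $V_m$ and $b$.

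It remains to transfer the reduced dichotomy to the full problem \eqref{eqn:EVP3}, i.e.\ to show that for every $v\ge v_m^u(\sigma;\delta)$ a [UPTW] exists and is unique up to translation, that none exist below, and that $v_m^u(\sigma;\delta)=V_m^u(\sigma)+o(1)$ as $\delta\to0$. For $v$ in compact subsets of $(V_m^u(\sigma),\infty)$ this is routine: the connecting orbit of \eqref{eqn:MVP4} is transverse there, so Fenichel's theorems together with exchange-lemma estimates produce a unique nearby heteroclinic for the singularly perturbed travelling-wave ODE, with the prescribed tails carrying over. The genuinely delicate part --- and the step I expect to be the main obstacle --- is the critical wave and the sharp non-existence threshold: one must control how the $O(\overline{\delta})$ correction to the slow-manifold graph perturbs both the degenerate decay rate at $\mathbf{e}_T$ and the minimum-speed solution of \eqref{eqn:MVP4}. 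I would handle this either by comparison-principle arguments applied to the reduced scalar equation (sandwiching $v_m^u(\sigma;\delta)$ between $V_m^u(\sigma)-o(1)$ and $V_m^u(\sigma)+o(1)$ via sub- and supersolutions with the correct exponential decay) or by continuity in $\overline{\delta}$ of the shooting / Evans-function data; this parallels the corresponding steps in the [FPTW] and [LPTW] analyses, so it requires careful bookkeeping of the $\overline{\delta}$-dependence rather than a new idea.
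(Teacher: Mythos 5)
Your proposal follows essentially the same route as the paper: qualitative bounds and monotonicity from Propositions~\ref{prop_4.1}--\ref{prop_4.3}, formal reduction onto the slow graph $I_T\sim b(\sigma)(M_T+(\sigma-1))$ as $\overline{\delta}\to 0$, passage to the scalar problem \eqref{eqn:MVP4} after the rescaling $y=\sqrt{b(\sigma)(\sigma-1)}\,z$, $V=v/\sqrt{b(\sigma)(\sigma-1)}$, and appeal to the classical FKPP dichotomy recorded in Theorem~\ref{thm_4.7}, with the algebra undoing the rescaling recovering \eqref{eqn:ExplicitySpeed1} and \eqref{eqn:ExplicitySpeed2}. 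Where you go beyond the paper is twofold and both additions are genuine improvements in exposition: (i) you make explicit \emph{why} the $\sigma>1$ case is simpler than the $\delta\le\sigma<1$ case, namely that both end states $\mathbf{e}_T$ and $\mathbf{e}_F$ lie on the interior of the normally hyperbolic branch $i=b(\sigma)(m+\sigma-1)$ of the critical manifold, so the corner/transition layer that dominated Section~\ref{sec_4.1} is absent and the reduction $I_T=b(\sigma)(M_T+\sigma-1)+O(\overline{\delta})$ is uniform in $z$; and (ii) you flag the transfer back to the full problem \eqref{eqn:EVP3}, particularly near the minimum-speed wave, as the delicate step and sketch two routes (sub/super-solutions or shooting/Evans-function continuity) for establishing $v_m^u(\sigma;\delta)=V_m^u(\sigma)+o(1)$. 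The paper itself treats this transfer as settled by the same formal-asymptotic framework used throughout Section~\ref{sec_PTW} and does not spell it out, so your caveat is well placed; however, to match what the paper actually claims you should also note that the case $\sigma\ge 3/2$ is genuinely easier (and in fact $\delta$-independent at leading order, cf.\ the discussion around Figure~\ref{fig:AsymptoticVsNumeric}) because the linearisation at $\mathbf{e}_T$ is non-degenerate and the $M_T$-equation alone already forces $v_m\ge 2\sqrt{b(\sigma)(\sigma-1)}$, whereas for $1<\sigma<3/2$ the minimum speed comes from a pushed (nonlinear selection) mechanism and is therefore the part sensitive to the $O(\overline{\delta})$ correction. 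With that distinction added, your argument is complete and consistent with the paper's.
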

\hfill\break
\begin{remk}
    We observe that the propagation speed and front thickness scales for these [UPTW] solutions are both $O(1)$ in $\delta$, as $\delta\to 0$.
\end{remk}

\subsubsection{Nonexistence of [FPTW]'s}
In relation to the existence of [FPTW] solutions we have (with the proof deferred to Appendix \ref{app_B}):\\
\begin{theorem}
    For each $\delta$ sufficiently small and $\sigma >1$ there are no [FPTW] solutions
\end{theorem}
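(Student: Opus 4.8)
The plan is to argue by contradiction, exploiting the same hodograph/matched‑asymptotics machinery as in Section \ref{sec_4.1}, but tracking the fact that for $\sigma>1$ the two branches of the fast critical manifold are disjoint. Suppose that, for some small $\delta$ and some $\sigma>1$, an [FPTW] $(M_T,I_T)$ exists. As for Propositions \ref{prop_4.1}--\ref{prop_4.3} (invariance of $[0,1]^2$ together with the strong maximum principle), $0<M_T<1$, $0<I_T<b(\sigma)\sigma$, both strictly monotone decreasing, with $M_T',I_T'\to0$ as $z\to\pm\infty$; in particular $(M_T,I_T)$ is a bounded, monotone heteroclinic orbit of \eqref{eqn4.3}--\eqref{eqn4.4} lying in $\{(M,I)\in[0,1]^2\}$, running from $\mathbf e_F$ at $z=-\infty$ to $\mathbf 0$ at $z=+\infty$.

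Since $M_T$ is strictly monotone we may write $I_T=\mathcal H(M_T)$ with $\mathcal H:[0,1]\to[0,b(\sigma)\sigma]$ increasing, $\mathcal H(0)=0$, $\mathcal H(1)=b(\sigma)\sigma$, and reduce \eqref{eqn4.4} to the hodograph equation \eqref{eqn4.28}. Writing $\mathcal H=\mathcal H_0+o(1)$ as $\overline\delta\to0$, the leading‑order algebraic balance forces $\mathcal H_0$ onto the zero set of $f$, i.e. onto the branch $I=0$ or the branch $I=I^*(M):=b(\sigma)(M+\sigma-1)$. For $\sigma<1$ these branches meet (at $I=0$, $M=1-\sigma$) and $\mathcal H_0$ can switch between them continuously — this is the case already analysed. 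For $\sigma>1$, however, $I^*(M)\ge b(\sigma)(\sigma-1)>0$ for all $M\in[0,1]$, so the branches are disjoint; matching $\mathcal H_0(1)=b(\sigma)\sigma=I^*(1)$ and $\mathcal H_0(0)=0$ then forces $\mathcal H_0(M)=I^*(M)$ on $(M_\dagger,1]$ and $\mathcal H_0(M)=0$ on $[0,M_\dagger)$ for some $M_\dagger\in[0,1]$, with a \emph{genuine jump} of size $I^*(M_\dagger)\ge b(\sigma)(\sigma-1)$ at $M=M_\dagger$. Such a jump corresponds to a fast ($z$‑)layer at $M_T=M_\dagger$ across which $M_T$ is frozen while $I_T$ drops from $I^*(M_\dagger)$ to $0$ (a monostable front of $\overline\delta(I''+\overline v I')+f(M_\dagger,I)=0$ into $I=0$); to the left of this layer the orbit runs along the upper branch $I=I^*(M)$, to the right along the lower branch $I=0$.

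The contradiction then follows from two facts. First, on the upper branch the reduced slow $M$‑flow is $M_{\tilde z}=-I^*(M)M(1-M)$, whose only equilibria in $[0,1]$ are $M=0,1$; so the slow segment that leaves $\mathbf e_F$ along this branch can only limit on $M=0$, i.e. on the \emph{intermediate} equilibrium $\mathbf e_T=(0,0,b(\sigma)(\sigma-1),0)\ne\mathbf 0$ in the $(M_T,M_T',I_T,I_T')$ phase space. Second, on the lower branch $I=0$ the reduced slow $M$‑flow is $M_{\tilde z}=0$, and more generally $M_T$ is frozen (to all algebraic orders) wherever $I_T$ is transcendentally small; quantitatively, integrating \eqref{eqn4.3} over the part of the orbit beyond the fast layer and using $0<I_T<b(\sigma)\sigma$ together with the decay rate of $I_T$ at $+\infty$ (the $v$‑dependence cancels) gives $M_T(z_L)=O(\delta)$, hence $M_\dagger=O(\delta)$. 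Thus the only way to satisfy $M_T\to0$ at $+\infty$ is for the orbit to run along the saddle‑type upper branch essentially all the way into $\mathbf e_T$ before attempting to leave it; but a single bounded orbit that accumulates at $\mathbf e_T$ cannot also reach $\mathbf 0$ while remaining in $[0,1]^2$, since the unstable manifold of $\mathbf e_T$ is one‑dimensional and lies in the invariant plane $\{M_T=M_T'=0\}$ (it is precisely the [LPTW]), so an orbit leaving $\mathbf e_T$ with $M_T>0$ is driven, by the sign of $f$ off the branch $I=I^*(M)$ and the monotone relation $(e^{\overline v z}I_T')'=-e^{\overline v z}f(M_T,I_T)/\overline\delta$, out of the square. (The naive concatenation $[$UPTW$]\to\mathbf e_T\to[$LPTW$]$ is not a counterexample: it is two orbits joined at the equilibrium $\mathbf e_T$, not one.) This contradicts the existence of the [FPTW].

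I expect the hard part to be exactly this last step — distinguishing rigorously why a single orbit can realise $\mathbf e_F\to\mathbf e_T$ or $\mathbf e_T\to\mathbf 0$ (the [UPTW] and [LPTW], which do exist) but not $\mathbf e_F\to\mathbf 0$. Because $\mathbf e_T$ is hyperbolic with a one‑dimensional unstable manifold, the putative connection is the shadowing of a heteroclinic chain through a saddle, and one must show that the geometry, together with the constraint $(M_T,I_T)\in[0,1]^2$, is inconsistent \emph{uniformly} in the (necessarily large, $O(\delta^{-1/2})$) wave speed $v$. The cleanest route is to push the inner/outer expansions of \eqref{eqn4.28} through: show that any continuous $\mathcal H$ solving \eqref{eqn4.28} with the $\sigma>1$ boundary data is driven at leading order to the discontinuous $\mathcal H_0$ above, and that the associated inner problem at $M=M_\dagger$, with $M_\dagger=O(\delta)$, admits no match to an outer solution reaching $\mathbf 0$ without $I_T$ or $M_T$ leaving $[0,1]$.
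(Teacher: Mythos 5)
Your proposal and the paper's proof are genuinely different in character, and yours has an acknowledged but real gap at the decisive step. You frame the obstruction geometrically: for $\sigma>1$ the two branches $I=0$ and $I=I^*(M)=b(\sigma)(M+\sigma-1)$ of the fast critical manifold are disjoint, so the hodograph limit $\mathcal H_0$ must jump, and the putative orbit would have to shadow a heteroclinic chain $\mathbf e_F\to\mathbf e_T\to\mathbf 0$ rather than being a single connection. That is a correct and illuminating picture, but you then have to rule out such a shadowing, and — as you yourself note — you never actually do: you gesture at matching an inner layer at $M_\dagger=O(\delta)$ to an outer solution reaching $\mathbf 0$ and ``expect'' it to fail, which is not a proof.

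The paper sidesteps the whole GSPT/matching apparatus and proceeds by a short, self-contained estimate. Linearising \eqref{eqn4.4} at $(0,0)$ as $z\to+\infty$ gives $\delta(DI_T''+vI_T')+\beta_1(\sigma-1)I_T\sim0$; because $\sigma>1$ this is a \emph{monostable} linearisation, so positivity of $I_T$ forces $v\ge2\sqrt{D(\sigma-1)\beta_1/\delta}$ and the a priori decay $I_T(z)\le\exp(-(\sigma-1)\beta_1 v^{-1}\delta^{-1}z)$ for $z\ge0$. In the slow variable $\xi=z/v$ this yields $\int_0^\infty I_T\,d\xi\le\delta/((\sigma-1)\beta_1)$, and since $v$ is large the $M$ equation reduces to the first-order ODE $M_{T\xi}\sim -I_T M_T(1-M_T)$. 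Integrating shows $M_T(\xi)$ tends to a limit $\ge\tfrac12 M_T(0)>0$, contradicting $M_T\to0$. This is your ``integrating \eqref{eqn4.3} beyond the fast layer'' intuition made precise, but applied globally in $z$ and without any appeal to a slow manifold, a jump location $M_\dagger$, or the unstable manifold of $\mathbf e_T$; the lower bound on $v$ (your ``$v=O(\delta^{-1/2})$'', mentioned only in passing) is the load-bearing ingredient that you would need to make explicit and then exploit quantitatively. If you want to retain your geometric framing, the cleanest fix is to import exactly this speed bound and the resulting $\int I_T=O(\delta)$ estimate as a lemma; with it the heteroclinic-chain discussion becomes unnecessary.
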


\hfill\break 
\hfill\break To summarise this section, we have now completed the analysis of all possible non-negative families of permanent form travelling wave solutions associated with the system \eqref{eqn:LDS}. 
%
{In relation to the initial value problem when the initial data satisfies the threshold $M_0 > \text{max}(1-\sigma,0)$ we have seen above that there are PTW structures available which may develop as large-t attractors. Specifically, when $\delta\le \sigma\le 1$ we anticipate the formation of a [FPTW], whilst for $\sigma>1$ we expect the formation of the fast [LPTW] spatially preceded by the slower [UPTW]. A further distinction between the three families of travelling waves is important to make: {[LPTW] (which exist only when $\sigma>1$) are independent of mutant epithelial cells, meaning that $M_T(z)=0$ for all $z\in\mathbb{R}$, which is in contrast to [FPTW] (which exist only when $\delta<\sigma\le1$) and [UPTW] (which exist only when $\sigma>1$) where $M_T(z)>0$ for all $z\in\mathbb{R}$.}

\section{Numerics}\label{sec_numerics}
{In the last section we have formulated the problems (\ref{eqn:EVP1}), (\ref{eqn:EVP2}) and (\ref{eqn:EVP3})  for [FPTW], [UPTW] and [LPTW] travelling wave solutions respectively, associated with the initial value problem (LIVP). In particular a full theory was developed with the parameter $\delta$ fixed and small and $\sigma\ge \delta$ as a bifurcation parameter. We now obtain numerical approximations $(M_T(z),I_T(z))$ for each type of travelling wave by considering careful numerical solutions to: (i) the approximate problems  (\ref{eqn:MVP1}), (\ref{eqn:MVP2}), (\ref{eqn:MVP3})
and (\ref{eqn:MVP4}) in support of the theory of the last section; (ii) the exact problems (\ref{eqn:EVP1}), (\ref{eqn:EVP2}) and (\ref{eqn:EVP3}), both when $\delta$ is small (for comparison with the theory of the last section) and with an increasing range of values of $\delta$ (in both cases representative values of $\sigma$ are taken)} 

For (i) {we simply seek to supplement the detailed information provided in Theorems \ref{thm_4.1}, \ref{thm_4.3}, \ref{thm_4.5} and \ref{thm_4.7}.  For $\sigma=1$ all the required information to (\ref{eqn:MVP2})  is supplied in Theorem \ref{thm_4.3}, and no numerical calculations need to be made. However, for $\sigma\in [\delta,1) \cup (1,\infty)$ the details of Theorems \ref{thm_4.1}, \ref{thm_4.5} and \ref{thm_4.5} can be illuminated by numerical solution of (\ref{eqn:MVP1}), (\ref{eqn:MVP2}) 
and (\ref{eqn:MVP4}) respectively. 
These numerical solutions are obtained by moving to the $(\chi,\chi')$ phase plane (where here $\chi$ represents $\tilde{M}_0, J$ or $M_T$ when considering (\ref{eqn:MVP1}), (\ref{eqn:MVP2}) 
and (\ref{eqn:MVP4}) respectively) and using MATLAB's initial value solver \texttt{ode45} together with a  shooting-type algorithm, relying on a linear approximation of the unstable manifold near $(\chi,\chi')=(1,0)$.} This is parametrised by the value of $v$ which is \textit{a priori} unknown when $\sigma<1$, but is known when $\sigma>1$ (for further details, see \cite{TNT}).
In all cases, we find it sufficient to prescribe both absolute and relative error tolerances of $10^{-13}$.
For (ii), when we are solving (\ref{eqn:EVP1}), (\ref{eqn:EVP2}) or (\ref{eqn:EVP3}) directly, and with $\delta$, not necessarily asymptotically small, we instead use MATLAB's boundary value problem solver \texttt{bvp5c}, which is seeded with the asymptotic approximation as the initial guess, and solved for $z\in[-L,L]$, where $L=30$ on a uniform grid of $N=5000$ grid points. We prescribe the same absolute and relative tolerances as in (i). The boundary conditions are once again based on  a linear approximation of the relevant manifolds near the  
equilibrium points. 
For all values of $\sigma$ we take the boundary condition $\mathbf{U}_T'(-L)=\lambda_F\mathbf{U}(-L)$ where $\mathbf{U}_T=(M_T,I_T)^T$ and $\lambda_{F}$ is the eigenvalue associated to the unstable manifold of the equilibrium point $\mathbf{e}_F$.
For $\sigma<1$ we take the additional boundary condition $\mathbf{U}_T'(L)=\text{diag}(\lambda_0,\lambda_1)\mathbf{U}_T(L)$,
where $\lambda_0$ and $\lambda_1$ are the leading and next order eigenvalues associated to   the stable manifold of the equilibrium point $\mathbf{e}(0)$. 
For $\sigma\geq{1}$ we {instead} use the boundary condition $\mathbf{U}_{T}'(L)=\lambda_T\mathbf{U}_T(L)${, where $\lambda_T$ is associated to the stable manifold of the equilbrium point $\mathbf{e}_T$}. The PTW propagation speed is in this case not unique and {the minimal speed $v_m$} is determined by solving for incrementally smaller speeds  until \texttt{bvp5c} returns a  non-monotonic solution.

\begin{figure}[t]
    \hspace{-1.5cm}\centering
    \includegraphics[width=0.5\linewidth]{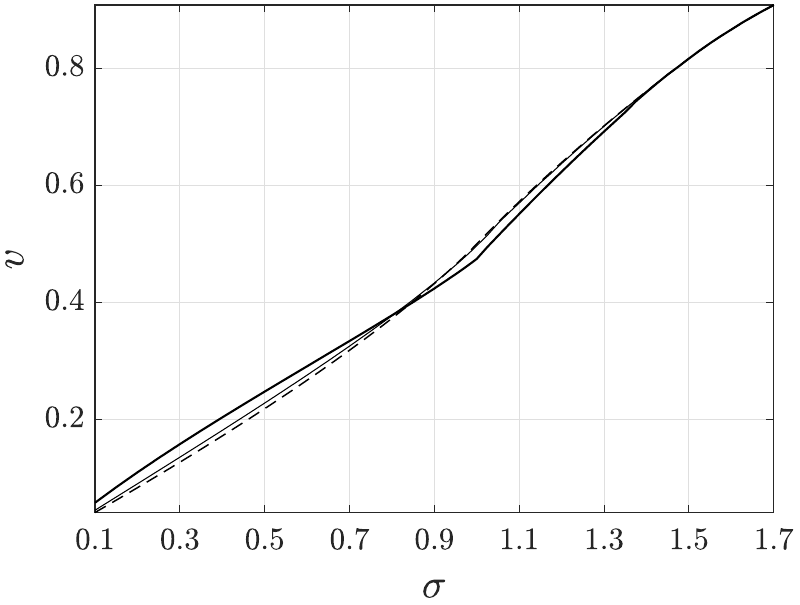}
    \caption{Numerically computed PTW =(minimum) propagation speed of the full system   obtained as a function of $\sigma$ for  $\delta=0.5$ (thick solid) and $\delta=0.05$ ({thin solid})
    by solving the set of boundary value problems (\ref{eqn:EVP1}), (\ref{eqn:EVP2}), and (\ref{eqn:EVP3}).
    This is compared against the asymptotic (dashed)  speed of propagation obtained from the one-dimensional boundary value problems  (\ref{eqn:MVP1}), (\ref{eqn:MVP2}) 
and (\ref{eqn:MVP4}). }
    \label{fig:AsymptoticVsNumeric}
\end{figure}

\begin{figure*}[t]
 \centering
        \subfloat[ $\sigma=0.75$]{%
            \includegraphics[width=0.4\linewidth]{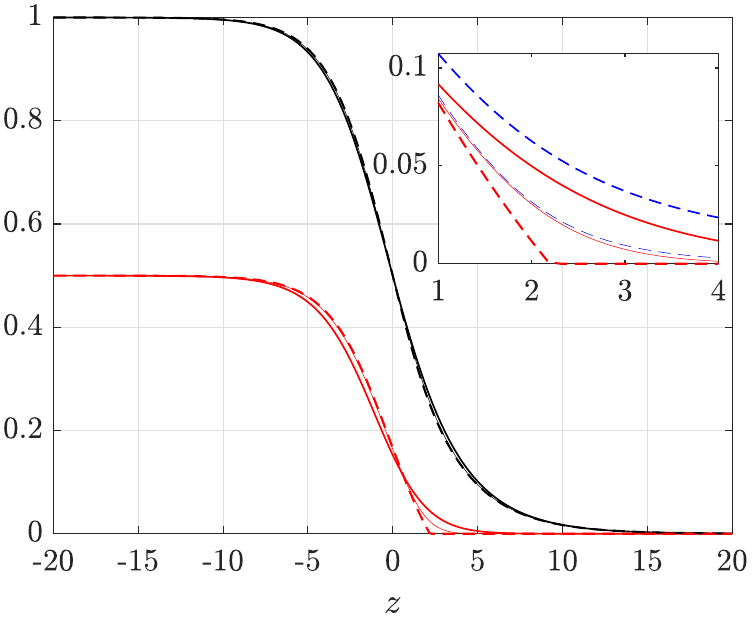}%
            \label{subfig:a}%
        }\hfill
        \subfloat[$\sigma=1$]{%
            \includegraphics[width=0.4\linewidth]{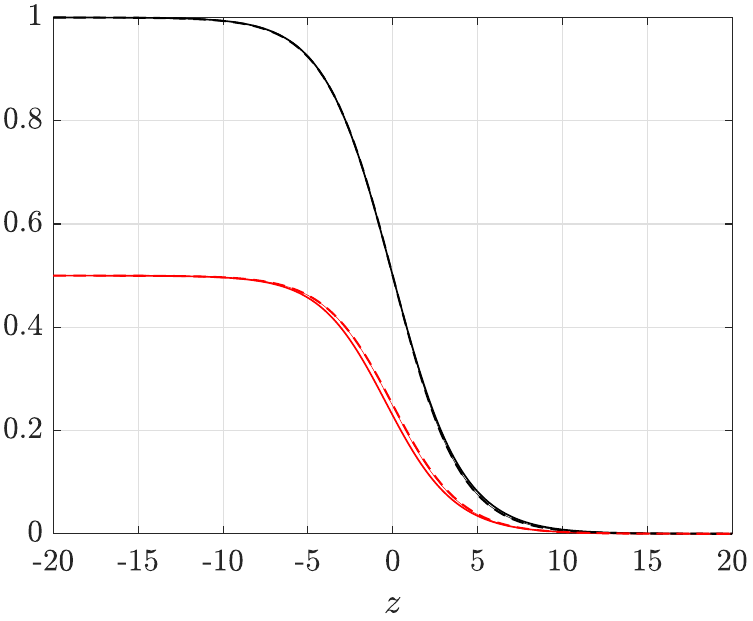}%
            \label{subfig:b}%
        }\\
        
        \subfloat[ $\sigma=1.25$]{%
            \includegraphics[width=0.4\linewidth]{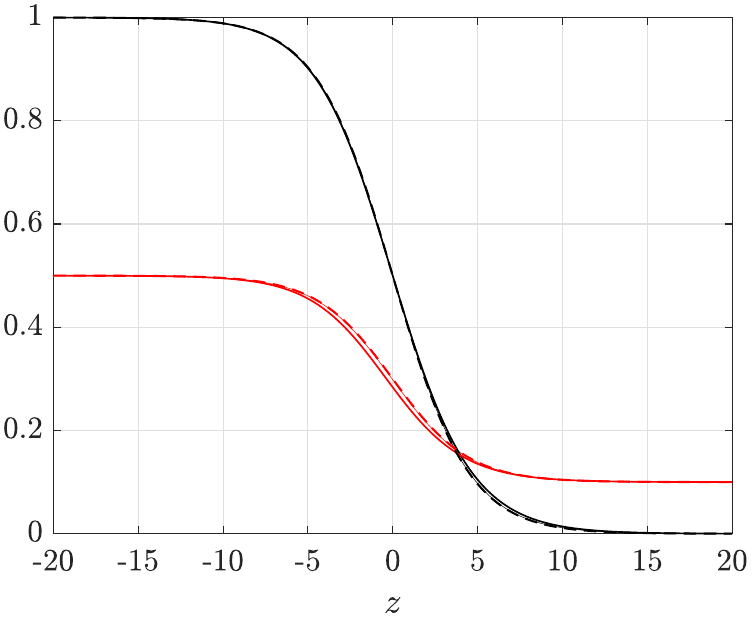}%
            \label{subfig:c}%
        }\hfill
        \subfloat[$\sigma=4$ ]{%
            \includegraphics[width=0.4\linewidth]{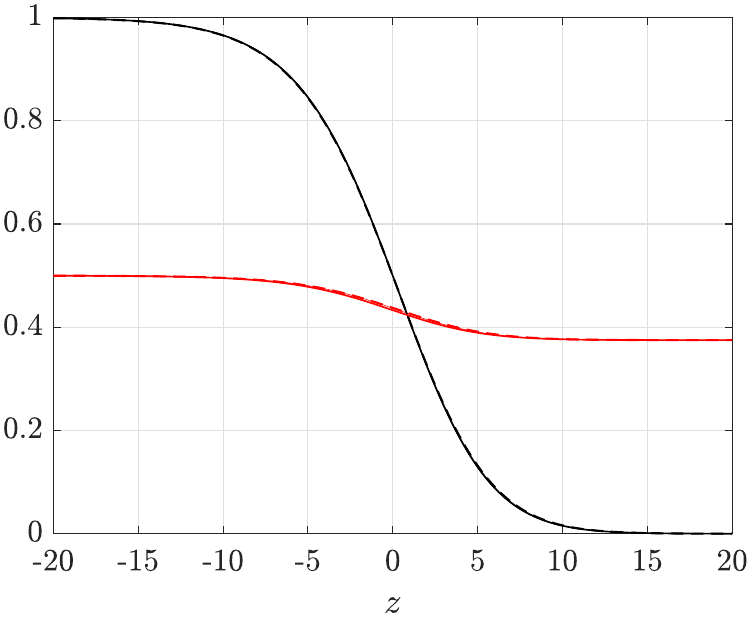}%
            \label{subfig:d}%
        }
        \caption{Numerically determined PTW profiles for $M_T(z)$ (black solid line) and $I_T(z)$ (red solid line) obtained 
        by solving the set of boundary value problems (\ref{eqn:EVP1}), (\ref{eqn:EVP2}), and (\ref{eqn:EVP3}) and
        plotted as a function of $z$ for selected values of $\sigma$, and   $\delta=0.05$ (thin solid line),  $\delta=0.5$ (thick solid line).  
These are compared against the  corresponding leading order asymptotic approximations   for $M_T(z)$ (black dashed line) and $I_T(z)$ (red dashed line) using (\ref{eqn:MVP1}) for (a),   (\ref{eqn:MVP2}) for (b) and (\ref{eqn:MVP4}) for (c) and (d). The inset in (a) focuses on the transition region near $M_T(z)=1-\sigma$ and additionally shows the first order correction to the  asymptotic approximation for $I_T(z)$  (blue dashed line) in the transition region near $M_T(z)=1-\sigma$ for the two values of $\delta$.}
        \label{fig:Profiles}
    \end{figure*}
        \begin{figure}[t]
    \hspace{-1.5cm}\centering
    \includegraphics[width=0.5\linewidth]{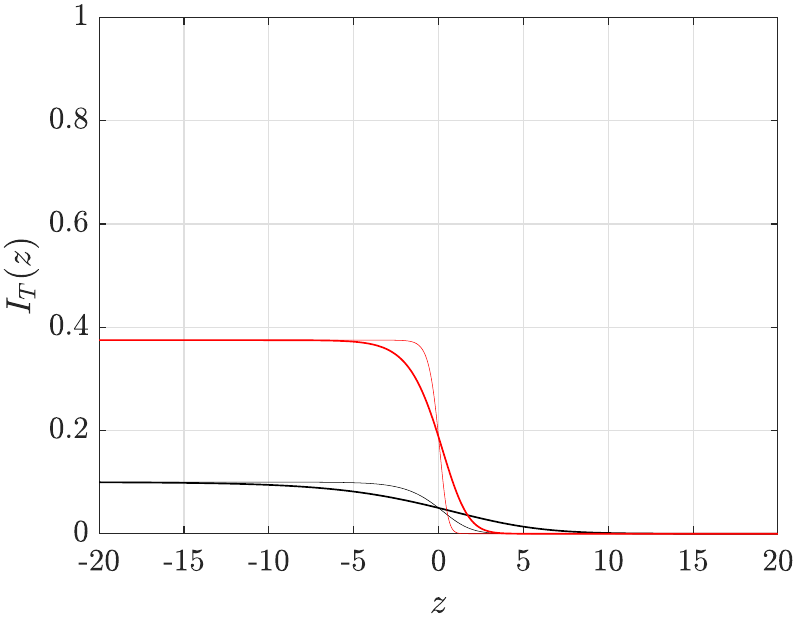}
    \caption{LPTW profiles $I_T(z)$ computed for     $\sigma=1.25$ (black lines) and $\sigma=4$ (red lines) and 
    $\delta=0.05$ (thin lines),  $\delta=0.5$ (thick lines). As $\sigma$ increases, the width of front decreases, and its maximum height increases.
    }
    \label{fig:LPTWs}
\end{figure}
Figure \ref{fig:AsymptoticVsNumeric} illustrates the PTW (mimimum) propagation speed as function of $\sigma$ from both the numerical solution of the exact problems and the asymptotic theory for small $\delta$ and shows excellent agreement that improves for decreasing $\delta$, as expected. For $\sigma>1$, the agreement   improves as $\sigma$ increases,  and we note that that the minimum propagation speed becomes independent of $\delta$ for $\sigma>3/2$. This is because in this case (\ref{eqn:EVP3}) has a non-degenerate linearisation for large $z$, and the equation for $M$ then requires (for any $\delta>0$ and $\sigma>3/2$), that minimal propagation speed $v_m\geq{}2\sqrt{b(\sigma)(\sigma-1)}$. The non-degenerate linearisation would imply (as for the classical Fisher problem) that $v_m=2\sqrt{b(\sigma)(\sigma-1)}$.

Figure \ref{fig:Profiles} shows  the associated PTW profiles 
from both the numerical solution of the exact problems and the asymptotic theory for small $\delta$
for representative values of $\sigma$.
 Again, agreement is excellent, particularly for the smaller value of $\delta$, and for $\sigma\geq{1}$.  When $\sigma>3/2$ (see Figure \ref{subfig:d}), we find an almost exact correspondence between the profiles which is in line with the $\delta$-independence of the propagation speed. For $\delta\le\sigma<1$ (see Figure \ref{subfig:a}),  the asymptotic approximation for $I_T$ is not smooth near the leading edge which is expected, and accounted for in the transition region (see Section \ref{sec_4.1}) near $M_T(z)=1-\sigma$, within which the profile is described by (\ref{eqn4.42'}). This discrepancy is however small, decreasing with the value of $\delta$. The inset in Figure \ref{subfig:a} focuses on the behaviour of $I_T(z)$ in the transition region. It shows that the agreement between the numerical solution of the exact problem and  the first order correction, derived from  (\ref{TBP}), is excellent, with the error decreasing in $\delta$. The  solution to (\ref{TBP}) is found by first rescaling $\tilde{H}=(c(\sigma)/b(\sigma)^2)^{1/3}\overline{H},\tilde{m}=(b(\sigma)c(\sigma))^{1/3}m$,
so that the problem becomes independent of $\sigma$. The global constants are then determined via a shooting method to approximately satisfy $c_{\pm\infty}(\sigma){\approx}0.7749(b(\sigma)^2/c(\sigma))^{1/3}$.

For completeness, we also include Figure \ref{fig:LPTWs} which shows the behaviour of the [LPTW]s that describe the propagation of immune cells occurring independently of the mutant epithelial cells. As $\delta$ becomes smaller the solution approaches a step function, in line with Theorem \ref{thm:4.6}; the  propagation speed is as explicitly given in Theorem \ref{thm:4.6}.

\section{Discussion}\label{sec_conclusions}
\begin{figure*}[t]
 \centering

\subfloat[$\sigma<{1}$]{%

\begin{tikzpicture}
    \draw (0, 0) node[inner sep=0] {\includegraphics[width=\linewidth]{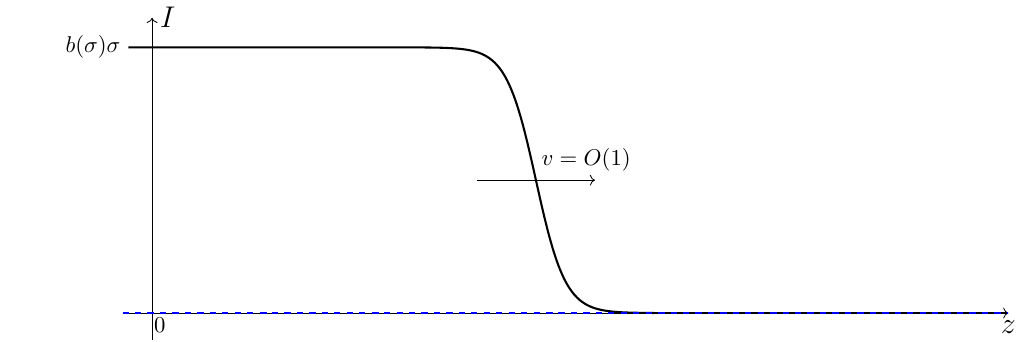}};
    \draw (3, 1) node {Cut-off bistable nonlinearity};
\end{tikzpicture}\label{subfig:Sketcha}%

        }\\
        
        \subfloat[$\sigma\geq{}1$ ]{%
           \begin{tikzpicture}
    \draw (0, 0) node[inner sep=0] {\includegraphics[width=\linewidth]{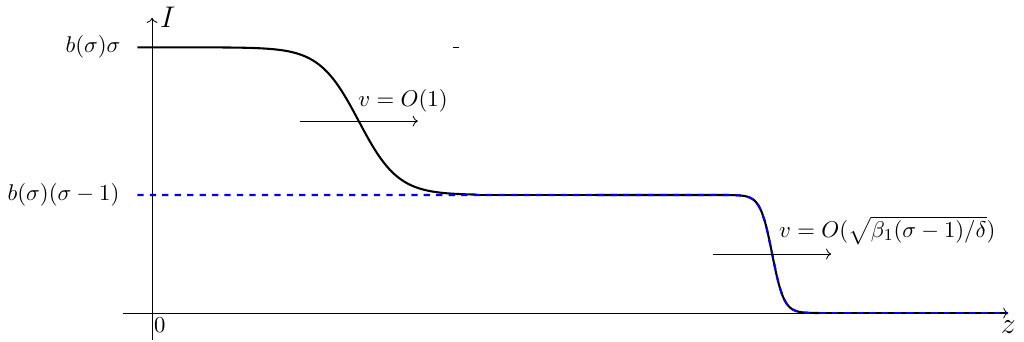}};
    \draw (3, 1) node {Cubic or classical Fisher-type nonlinearity};
\end{tikzpicture}\label{subfig:Sketchb}%

        }
        \caption{The immune component $I$   of the solution to the evolution problem (LIVP) for large-$t$,
         for (a) the anti-inflammatory regime, 
        when the initial mutant density satisfies $M_0>\max(K_M(1-\sigma),0)$ (black solid) and  $M_0\le\max(K_M(1-\sigma),0)$ (blue dashed) and (b) the pro-inflammatory regime, when $M_0>0$ (black solid) and $M_0=0$ (blue dashed). 
      }
        \label{fig:LongTimeSketch}
    \end{figure*}

We have developed a simple  system of reaction-diffusion equations to mechanistically model the spatio-temporal
progression of mutant epithelial cells as they interact with the intestinal barrier, immune cells,
and bacteria. We have focused on answering if, and how, mutant epithelial cells are a significant factor driving the propagation of inflammation in IBD.  Our analysis reveals that they are either essential to, or merely promoting of, inflammation depending on the ratio between growth and decay rates (scaled by constant epithelial cell density), represented by the dimensionless parameter $\sigma$, given by (\ref{eqn:SigmaExpression}).
 This parameter can also be interpreted as the ratio of the strengths of pro-inflammatory processes (immune cell recruitment, damage to the intestinal barrier, bacterial entry) to anti-inflammatory processes (immune cell clearance, barrier repair, and bacterial clearance).

 In Section \ref{sec_IVP} we established that permanent form travelling wave structures may develop as large-$t$ attractors for the initial value problem (LIVP) corresponding to the dynamical system for mutant epithelial cells and immune cells (\ref{eqn:LDS}). In Section \ref{sec_PTW} we   focused on the situation where the ratio of the growth rate mutant epithelial cells compared to the response rate of the adaptive immune system, represented by $\delta$, is assumed small.  We showed that the system supports three distinct families of PTW solutions, namely: full transition PTWs connecting the fully saturated equilibrium $\mathbf{e}_F$ to the unreacted equilibrium  $\mathbf{e}(0)$ ([FPTW]s); upper transition PTWs connecting the fully saturated equilibrium to a transitional equilibrium point $\mathbf{e}_T$ ([UPTW]s); and lower transition PTWs connecting the transitional equilibrium to the unreacted state ([LPTW]s).   In the anti-inflammatory regime, case (i, $\delta<\sigma<1$), there exists a [FPTW]  with unique speed $v=O(1)$, which at leading order is obtained from a scalar reaction-diffusion equation with a cut-off bistable nonlinearity.
  When pro- and anti-inflammatory processes are exactly balanced (ii, $\sigma=1)$ we showed the existence of [FPTW] solutions with minimum speed $v_m=O(1)$ whose reduced dynamics are governed by a cubic Fisher nonlinearity. In the pro-inflammatory regime (iii, $ \sigma>1)$, [FPTW] solutions no longer exist, and we instead showed the existence of [UPTW] with slow minimum speed $v_m^U=O(1)$, and [LPTW] with fast minimum speed $v_m^L=O(1/\sqrt{\delta})$) which are governed by, at leading order and exactly respectively, classical Fisher-type non-linearities. Unlike all other travelling waves we obtained, the [LPTW] solutions are independent of mutant epithelial cells. The ratio of the maximum height of the [UPTW] to the maximum height of the [LPTW] is given by $\sigma/{(\sigma-1)}$. Thus,  the effect of presence of mutant epithelial cells increases  as  $\sigma$ approaches $1$ from above.

We illustrate the $\sigma$-dependent effect of mutant epithelial cells on the expected long-term development of the initial value problem (LIVP) for (\ref{eqn:LDS}) in the anti-inflammatory regime (Fig \ref{subfig:Sketcha}) and the pro-inflammatory regime (Fig \ref{subfig:Sketchb}). In the former case, there is no propagation of inflammation unless the initial mutant density $M_0$ exceeds the threshold value $K_M(1-\sigma)$. Thus, the presence of mutant epithelial cells enables inflammation to propagate in this case, when it otherwise would be unable to do so.  In the pro-inflammatory case, the propagation occurs regardless of the value of $M_0$, meaning that mutant epithelial cells are not essential to the propagation of inflammation. However, they do promote higher levels of inflammation, albeit with subdominant effect on the speed of propagation, in the form of a slow moving moving [UPTW].

 We highlight that case (i, $\delta<\sigma<1$) is the  first instance where the reduction to a cut-off scalar reaction-diffusion equation occurs from  a system of reaction-diffusion equations as a result of separation in timescales for the system (a similar reduction has been hypothesised in \cite{holzer2017wavetrain} where the underlying system is also characterised by a separation in timescale). Note that the value of the dependent variable at the cut-off value is $1-\sigma$, and consequently, our model requires the consideration of an arbitrary cut-off.
 This is to be contrasted with previous works on scalar cut-off reaction-diffusion problems, whose modelling considerations led them to focus on small cut-off values, see for example,  \cite{dumortier2007critical,dumortier2010geometric,popovic2011geometric,popovic2012geometric}.
Our theory covers the full range of available cut-off values.

 We have focused on the permanent form travelling waves solutions to the system (\ref{eqn:LDS}), but it would also be interesting to consider the evolution problem in more detail and determine how the families of PTWs identified in Section \ref{sec_PTW} are established as large-$t$ attractors in the evolution problem (LIVP) for arbitrary $\sigma$. The rate of convergence towards each PTW solution should be studied to determine if PTW solutions emerge on a realistic timescale. Another important direction is to study other reaction functions in (\ref{eqn:LDS}), so as to establish the robustness of our conclusions to modelling choices. A particularly interesting reaction is one where immune cells are recruited proportionally to their interaction rate with bacteria, rather than just the amount of bacteria, leading a reaction rate $\alpha_1BI(1-I/K_I)$ as opposed to $\alpha_1B(1-I/K_I)$. We expect to obtain a discontinuous bistable cut-off reaction function in the reduced dynamics for some parameter regimes, requiring some modifications to the asymptotic analysis developed herein.

  By highlighting how pro- and anti-inflammatory processes interact with mutant epithelial cells to drive inflammation, our work has potential for accelerating the development of therapies which target either pro-inflammatory molecules (i.e as TNF$\alpha$), or  anti- inflammatory molecules, such as pro-solving lipid mediators (see \cite{schoultz2019cellular} for a more complete list of therapeutic targets). A key prediction, which is a consequence of the $\sigma$-dependent effect of mutant epithelial cells discussed above, is that therapies can halt the spread of inflammation by bringing the system into the anti-inflammatory regime. This can be achieved  by targeting the processes associated with the parameters in the closed form expression  (\ref{eqn:SigmaExpression}) for $\sigma$. Currently, treatments frequently use anti-TNF$\alpha$  antibodies to reduce immune cell activity and thus the damage done to the intestinal barrier \cite{schoultz2019cellular} (decreasing $\alpha_2$). Such treatments can also indirectly increase apoptosis of immune cells \cite{levin2016mechanism} (increasing $\beta_1$). Other state-of-the-art treatments, like anti-adhesion agents, target the ability of immune cells to infiltrate the intestinal barrier \cite{chudy2019state} (decreasing $\alpha_1$). However, it may be necessary to decrease $\sigma$ even further below $1$ since the necessary condition for propagation $M_0\geq{}\max(K_M(1-\sigma),0)$ may still be satisfied even for small $\sigma$.

Of course, one may want to develop a more detailed model of inflammation. However, we anticipate that the slow growth of the mutant epithelial cells means that the structural theoretical approach developed in this paper may well be adapted to readily analyse such models in a similar manner, allowing the determination of the boundary between health and disease.
\bmhead{Acknowledgements}

F.S. acknowledges funding for a UKRI Future Leaders Fellowship (MR/T043571/1). BvR thanks the University of Birmingham for support.



\section*{Declarations}
\textbf{Conflict of interest:} The authors have no relevant financial or non-financial interests to disclose.

\bibliography{bibliography}
\begin{appendix}\label{appendix}
    \section{Proofs from Section \ref{sec_IVP}}\label{app_A}
    In this appendix we provide those proofs defered from subsections 3.1 and 3.2. First we require a preliminary result. We set $M_a(M_0,\sigma)=M_0 + (1/2)((1-\sigma)-M_0)\in (M_0,(1-\sigma))$ and introduce, under the parameter restrictions of subsection 3.1,\\

\begin{lem} \label{lem}
With $\delta$ small, let $(M,I):\overline{D}_{\infty}\to R(\delta)$ be the solution to (LIVP). Then
\begin{equation}
    M(x,t) < M_a(M_0,\sigma)~~\forall~~(x,t)\in \overline{D}_{\infty}. \label{eqn3.14}
\end{equation}
\end{lem}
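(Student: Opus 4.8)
The plan is to establish the stronger quantitative bound $M(x,t)\le M_0\exp(I_0\delta/c)$ on $\overline D_{\infty}$, where $c:=c(M_0,\sigma)>0$; under the standing restrictions $\sigma\in[\delta,1-\delta^{1/4})$ and $M_0\in(0,(1-\sigma)-\delta^{1/4})$ the right-hand side is strictly below $M_a(M_0,\sigma)$ once $\delta$ is small, so that \eqref{eqn3.14} follows. The mechanism is that, as long as $M$ stays below the threshold $1-\sigma$, the immune reaction $f(M,I)$ is strongly damping, so that $I$ decays exponentially on the $t=O(\delta)$ scale and $M$ is left essentially just to diffuse; since this damping estimate itself presupposes $M\le M_a$, the argument must be run as a bootstrap/continuation in $t$.

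The first ingredient is an elementary pointwise bound on the reaction $f$. If $(M,I)\in[0,1]^2$ and $M\le M_a$, then, since $M_a<1-\sigma$, one has $b(\sigma)(M-(1-\sigma))-I\le -b(\sigma)\eta$ with $\eta:=(1-\sigma)-M_a=\tfrac12((1-\sigma)-M_0)>0$, while the denominator $\alpha_2 I+\beta_2(1-M)$ lies in $(0,\alpha_2+\beta_2]$; hence
\[
f(M,I)\le -c\,I,\qquad c=\frac{a(\sigma)b(\sigma)\eta}{\alpha_2+\beta_2},
\]
the constant of Theorem~\ref{thm3.2}. For later use, note that $a(\sigma)b(\sigma)\equiv\beta_1\beta_2$ is independent of $\sigma$, so $c>\beta_1\beta_2\delta^{1/4}/(2(\alpha_2+\beta_2))$ uniformly over the admissible $\sigma$ and $M_0$.

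Now set $T^{*}=\sup\{\tau>0:\ M\le M_a\ \text{on}\ \overline D_{\tau}\}$. Since $IM(1-M)\le M$, $M$ is a subsolution of $\chi_t=\chi_{xx}+\chi$ with data $M_0H(-x)$, so $M\le e^{t}M_0$ and hence $T^{*}\ge\ln(M_a/M_0)>0$. For any $\tau<T^{*}$ we have $M\le M_a$ on $\overline D_{\tau}$, so the bound on $f$ makes $I$ there a subsolution of $\psi_t=D\psi_{xx}-\delta^{-1}c\,\psi$ with data $I_0H(-x)$, and the parabolic comparison principle (cf. Smoller~\cite{Smoll}) gives $I(x,t)\le\tfrac12 I_0 e^{-\delta^{-1}ct}\erfc(x/2(Dt)^{1/2})\le I_0 e^{-\delta^{-1}ct}$ on $\overline D_{\tau}$. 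Feeding this back, and using $IM(1-M)\le IM$, $M$ is in turn a subsolution of $\chi_t=\chi_{xx}+I_0 e^{-\delta^{-1}ct}\chi$ with data $M_0H(-x)$, and comparison yields
\[
M(x,t)\le M_0\exp\!\Big(\tfrac{I_0\delta}{c}\big(1-e^{-\delta^{-1}ct}\big)\Big)\le M_0\,e^{I_0\delta/c}\quad\text{on }\overline D_{\tau}.
\]
Because $c\ge\beta_1\beta_2\delta^{1/4}/(2(\alpha_2+\beta_2))$ and $I_0<1$, the exponent is $O(\delta^{3/4})$ with constant depending only on the fixed $O(1)$ parameters, so $M_0 e^{I_0\delta/c}\le M_0+O(\delta^{3/4})<M_0+\tfrac12\delta^{1/4}<M_a$ for $\delta$ small, with a uniform gap. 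Letting $\tau\uparrow T^{*}$ this persists on $\overline D_{T^{*}}$; were $T^{*}<\infty$, a routine openness argument — continuity of $M$ on $D_{\infty}$, the uniform limits $M(x,t)\to0$ as $x\to+\infty$ and $M(x,t)\to M_{-\infty}(t)=m(t)$ as $x\to-\infty$ established earlier in Section~\ref{sec_IVP}, and the bound $m(t)\le M_0 e^{I_0\delta/c}<M_a$ inherited in the limit — would produce a strictly larger strip on which $M\le M_a$, a contradiction. Hence $T^{*}=\infty$ and $M\le M_0 e^{I_0\delta/c}<M_a$ on all of $\overline D_{\infty}$.

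The main obstacle is precisely the apparent circularity: the damping bound on $f$, the decay of $I$, and the near-stationarity of $M$ all presuppose $M\le M_a$, which is the conclusion — this is what the continuation argument breaks. Beyond that, two points need care: making the ``$\delta$ small'' threshold uniform over the admissible ranges of $\sigma$ and $M_0$ (where the identity $a(\sigma)b(\sigma)\equiv\beta_1\beta_2$ and the hypothesis $(1-\sigma)-M_0>\delta^{1/4}$ are used), and, in the openness step, controlling $\sup_{x}M(x,t)$ via the behaviour as $x\to\pm\infty$; there the relevant limit value is $m(t)$, which satisfies the same bound by the identical bootstrap applied to the temporal system (DS) — equivalently, is read off from the phase portrait of Section~\ref{sec_IVP}.
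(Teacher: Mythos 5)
Your proof is correct and is essentially the same as the paper's: the paper runs exactly the same chain of estimates (the damping bound $f(M,I)\le -c(M_0,\sigma)I$ valid while $M\le M_a$, a parabolic comparison giving $I\le \tfrac12 I_0 e^{-\delta^{-1}ct}\erfc(x/2(Dt)^{1/2})$, a second parabolic comparison giving $M\le M_0\exp(I_0\delta c^{-1}(1-e^{-\delta^{-1}ct}))$, and then the smallness of $I_0\delta/c=O(\delta^{3/4})$ to conclude $M<M_a$ strictly), only packaging the circularity-breaking step as a contradiction at a first-crossing point $(x_a,t_a)$ — located away from $x=\pm\infty$ via the limit $M_{-\infty}(t)\le M_0+O(\delta)$ — rather than as your continuation-in-time argument with the maximal $T^*$. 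The two packagings are logically equivalent here, and your observation that $a(\sigma)b(\sigma)\equiv\beta_1\beta_2$ gives the same uniform lower bound on $c$ that the paper uses implicitly in its final display.
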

\begin{proof}
    Suppose the contrary holds, then since $M_a(M_0,\sigma)>M_0 + (1/2)\delta^{\frac{1}{4}}$ and $M_{-\infty}(t)\le M_0 + O(\delta)~~\forall~~t\ge 0$, it follows that there is a point $(x,t)=(x_a,t_a)$ such that,
    \begin{equation}
        0<M(x,t)<M_a(M_0,\sigma)~~\forall~~(x,t)\in \mathbb{R} \times (0,t_a),  \label{eqn3.15}
    \end{equation}
    whilst,
    \begin{equation}
        M(x_a,t_a) = M_a(M_0,\sigma). \label{eqn3.16}
    \end{equation}
    It then follows from (\ref{eqn1.17}) that,
    \begin{equation}
     f(M(x,t),I(x,t)) \le -c(M_0,\sigma)I(x,t)~~\forall~~(x,t)\in \overline{D}_{t_a}. \label{eqn3.17}   
    \end{equation}
    where,
    \begin{equation}
        c(M_0,\sigma) = \frac{a(\sigma)b(\sigma)((1-\sigma)-M_0)}{2(\alpha_2 + \beta_2)}>0. \label{eqn3.18} 
    \end{equation}
    A straightforward application of the parabolic comparison theorem on $\overline{D}_{t_a}$, with the parabolic operator $\mathcal{N}[w]\equiv w_t - Dw_{xx} + \delta^{-1}c(M_0,\sigma)w$, then establishes, via (\ref{eqn3.17}), that,
    \begin{equation}
        0<I(x,t) < I_0 \exp{(-\delta^{-1}c(M_0,\sigma)t)}~~\forall~~(x,t)\in D_{t_a}. \label{eqn3.19}   
    \end{equation}
    We now make another straightforward application of the parabolic comparison theorem on $\overline{D}_{t_a}$, this time with the parabolic operator $\mathcal{N}[w]\equiv w_t - w_{xx} -  I_0 \exp{(-\delta^{-1}c(M_0,\sigma)t)}w$, to establish that, via  (\ref{eqn3.19}), (\ref{eqn1.15}) and (\ref{eqn3.15}),
    \begin{equation}
        M(x,t) < M_0\exp{\left(I_0\frac{\delta}{c(M_0,\sigma)}\left(1 - \exp{(-\delta^{-1}c(M_0,\sigma)t)}\right)\right)}~~\forall~~(x,t)\in D_{t_a}, \label{eqn3.20} 
    \end{equation}
    which gives, with $\delta$ small (recalling that $((1-\sigma) -M_0) > \delta^{\frac{1}{4}}$),
    \begin{equation}
        M(x,t)< M_0\left(1 + \frac{4}{c(M_0,\sigma)}\delta \right)~~\forall~~(x,t)\in D_{t_a}, \label{eqn3.21}
    \end{equation}
    and, in particular,
    \begin{equation}
        M(x_a,t_a) < M_0\left(1 + \frac{4}{c(M_0,\sigma)}\delta \right). 
    \end{equation}
    Thus, from (\ref{eqn3.16}),
    \begin{equation}
        M_a(M_0,\sigma) -  M_0\left(1 + \frac{4}{c(M_0,\sigma)}\delta \right) < 0. \label{eqm3.22} 
    \end{equation}
    However, with $\delta$ suitably small, 
    \begin{multline}
      M_a(M_0,\sigma) -  M_0\left(1 + \frac{4}{c(M_0,\sigma)}\delta \right) = (1/2)((1-\sigma)-M_0) \left(1 - \frac{8M_0 \delta}{c(M_0,\sigma) ((1-\sigma)-M_0)}\right)\\
      \ge (1/2)\delta^{\frac{1}{4}} \left(1 - \frac{16\delta^{\frac{1}{2}}(\alpha_2+\beta_2)}{\beta_1 \beta_2}\right) > 0
      \end{multline}
    and we arrive at a contradiction, which completes the proof.    
\end{proof}
We now give the respective proofs.

\subsection{Proof of Theorem \ref{thm3.2} }

    The left side of the first inequality follows directly from Theorem 3.1, whilst the left side of the second inequality follows from Theorem 3.1 and Lemma \ref{lem}, with an application of the parabolic comparison theorem with the operator $\mathcal{N}[w]\equiv w_t - w_{xx}$. The right side inequality in (\ref{eqn3.23}) follows again from Theorem 3.1, Lemma \ref{lem}  and an applicaton of the parabolic comparison theorem with the operator $\mathcal{N}[w]\equiv w_t - Dw_{xx} + \delta^{-1}c(M_0,\sigma)w$. Correspondingly, the right side inequality in (\ref{eqn3.24}) follows from Theorem 3.1, Lemma \ref{lem}  and an applicaton of the parabolic comparison theorem now with the operator $\mathcal{N}[w]\equiv w_t - w_{xx} + I_0 \exp{(-\delta^{-1}c(M_0,\sigma)t)} w$. The proof is complete.

\subsection{Proof of Theorem \ref{thm3.3}}
The first inequalities follow from Theorem 3.1, and applications of the parabolic comparison theorem, with operators $\mathcal{N}[w]\equiv w_t - w_{xx}$ and $\mathcal{N}[w]\equiv w_t - w_{xx} - w(1-w)$ respectively. Correspondingly the second inequalites are obtained by using Theorem 3.1 and the operator $\mathcal{N}[w]\equiv w_t - w_{xx} - a(\sigma)\alpha_2^{-1}(b(\sigma)\sigma - w)$ in the parabolic comparison theorem. The proof is complete.

\section{Proofs from Section \ref{sec_PTW}}\label{app_B}
Here we provide proofs to Propositions 4.1-4.4 and Theorem 4.9

\subsection{Proof of Proposition 4.1}
Suppose that there exists $z_0\in \mathbb{R}$ such that $M_T(z_0)=0$, then since $M_T(z)\in [0,1]$, we must have $M_T'(z_0)=0$. However, this forms a regular initial value problem for $M_T(z)$ at $z=z_0$ with equation (\ref{eqn4.3}), which has the trivial solution as well as $M_T(z)$. Uniqueness for this initial value problem then requires that $M_T(z)=0$ for all $z\in \mathbb{R}$, which contradicts condition (\ref{eqn4.5}). The result for $I_T(z)$ follows similarly.
\subsection{Proof of Proposition 4.2}
The proof for $M_T(z)$ follows exactly the strategy in the last proof, and is left to the reader. For $I_T(z)$, suppose false, then there is a point $z^*\in \mathbb{R}$ such that
\begin{equation}
    I_T(z^*)\ge b(\sigma)\sigma~~\text{and}~~I_T'(z^*)=0, \label{eqn4.8}
\end{equation}
with 
\begin{equation}
    I_T''(z^*)\le 0,  \label{eqn4.9}
\end{equation}
whilst, from the first part of this result, proved above, $M_T(z^*)<1$. However, from  equation (\ref{eqn4.4}) with (\ref{eqn4.8}), we have,
\begin{equation}
    I_T''(z^*) = -\frac{(b(\sigma)(M_T(z^*)-1) + (b(\sigma)\sigma-I_T(z^*))}{\overline{\delta}(\alpha_2I_T(z^*) + \beta_2(1-M_T(z^*))}>0
\end{equation}
via (\ref{eqn4.8}) and Proposition 4.1, which contradicts (\ref{eqn4.9}). The result follows.
\subsection{Proof of Proposition 4.3}
    For $M_T(z)$, suppose false. Then there exists $z^*\in \mathbb{R}$ such that
    \begin{equation}
        M_T'(z^*)=0~~\text{and}~~M_T''(z^*)\ge0.   \label{eqn4.11}
    \end{equation}
    However, equation (\ref{eqn4.3}) with (\ref{eqn4.11})$_1$, gives,
    \begin{equation}
        M_T''(z^*) = -I_T(z^*)M_T(z^*)(1-M_T(z^*))<0
    \end{equation}
    via Proposition 4.1 and Proposition 4.2, which contradicts (\ref{eqn4.11})$_2$, and the result follows. For $I_T(z)$, again suppose false. Then there exists $z_1^*\in \mathbb{R}$ such that
    \begin{equation}
        I_T'(z_1^*)=0~~\text{and}~~I_T''(z_1^*)\ge0,   \label{eqn4.13}
    \end{equation}
    followed by $z_2^* > z_1^*$ with,
    \begin{equation}
        0<I_T(z_1^*) \le I_T(z_2^*)<b(\sigma)\sigma,  \label{eqn4.14}
    \end{equation}
    and
    \begin{equation}
        I_T'(z_2^*)=0~~\text{and}~~I_T''(z_2^*)\le 0. \label{eqn4.15}
    \end{equation}
   where use has also been made of Propositions 4.1 and 4.2. From the first part of this result, proved above, and Propositions 4.1 and 4.2, we also have,
    \begin{equation}
      0 < M_T(z_2^*) < M_T(z_1^*) < 1.  \label{eqn4.16}  
    \end{equation}
   Now, from equation (\ref{eqn4.4}) we have, using (\ref{eqn4.13})$_1$,
   \begin{equation}
       I_T''(z_1^*) = -\frac{1}{\overline{\delta}}f(M_T(z_1^*),I_T(z_1^*)), \label{eqn4.17}
   \end{equation}
   and so, from (\ref{eqn4.13})$_2$, we must have,
   \begin{equation}
    f(M_T(z_1^*),I_T(z_1^*)) \le 0.  \label{eqn4.18}   
   \end{equation}
   We now observe that on $(X,Y)\in [0,1]^2$, we have the disjoint subsets,
   \begin{multline}
       A = \{(X,Y)\in [0,1]^2: f(X,Y) < 0\}\\ = \{(X,Y)\in [0,1]^2: 0\le X\le 1,~ Y > \text{max}(0, (b(\sigma)(\sigma-1) +b(\sigma)X) \},
   \end{multline}
   \begin{multline}
   B = \{(X,Y)\in [0,1]^2: f(X,Y) = 0\}\\ = \{(X,Y)\in [0,1]^2: 0\le X\le 1,~ Y = \text{max}(0, (b(\sigma)(\sigma-1) +b(\sigma)X) \},
   \end{multline}
   \begin{multline}
   C = \{(X,Y)\in [0,1]^2: f(X,Y) > 0\}\\ = \{(X,Y)\in [0,1]^2: (1-\sigma)\le X\le 1,~ 0<Y<(b(\sigma)(\sigma-1) +b(\sigma)X) \}.
   \end{multline}
   Therefore, we must have, from (\ref{eqn4.18}),
   \begin{equation}
     (M_T(z_1^*),I_T(z_1^*)) \in A\cup C,  
   \end{equation}
   and so at $z=z_2^*$ then,
   \begin{equation}
       (M_T(z_2^*),I_T(z_2^*)\in A,
   \end{equation}
   via the first part, proved above. Thus,
   \begin{equation}
       f(M_T(z_2^*),I_T(z_2^*)) < 0, \label{eqn4.23}
   \end{equation}
   and so via equation (\ref{eqn4.3}), with (\ref{eqn4.15})$_1$, we have,
   \begin{equation}
       I_T''(z_2^*) = - \frac{f(I_T(z_2^*),M_T(z_2^*))}{\overline{\delta}} > 0,
   \end{equation}
   which contradicts (\ref{eqn4.15})$_2$. The result follows.
   \subsection{Proof of Proposition 4.4}
    For $I_T(z)$ the proof is exactly the same as in Proposition 4.1. Now consider $M_T:\mathbb{R}\to [0,1]$. From the first equation in \eqref{eqn:EVP2} we have,
    \begin{equation}
      (M_T'(z)\exp{(vz)})' = (M_T(z)-1)M_T(z)I_T(z)\exp{(vz)} \le 0~~\forall~~z\in \mathbb{R},  
    \end{equation}
    via the first statement above and conditions in \eqref{eqn:EVP2}. Thus, on integration, we have,
    \begin{equation}
        M_T'(z)\le 0~~\forall~~z\in \mathbb{R},
    \end{equation}
    which gives, using the condition as $z\to -\infty$,
    \begin{equation}
        M_T(z)\le 0~~\forall~~z\in \mathbb{R},
    \end{equation}
    and so,
    \begin{equation}
        M_T(z)=0~~\forall~~z\in \mathbb{R},
    \end{equation}
    as required.

   \subsection{Proof of Theorem 4.9}
    We return to \eqref{eqn:EVP1}. Suppose that a [FPTW] solution exist, say $(M_T,I_T):\mathbb{R}\to [0,1]^2$. Then it is straightforward to establish that Propositions 4.1-4.3 hold. In addition, as $z\to \infty$, linearization requires that,
    \begin{equation}
        \delta(DI_T'' + vI_T')+ \beta_1(\sigma-1)I_T \sim 0,~~z\gg1,
    \end{equation}
    \begin{equation}
        I_T(z)\to 0~~\text{as}~~z\to \infty.
    \end{equation}
    Since $I_T$ is positive, this establishes that the propagation speed $v$ must satisfy the inequality,
    \begin{equation}
        v\ge 2\sqrt{D(\sigma-1)\beta_1/\delta}. \label{eqn4.129}
    \end{equation}
    Moreover, via Propositions 4.1-43, we infer that there exists a translation of the [FPTW] so that,
    \begin{equation}
        I_T(z) \le \exp{}\left(-\frac{1}{2D}(v - \sqrt{v^2 - 4D(\sigma-1)\beta_1\delta^{-1}})z\right)
        ~~\forall~~z\ge 0
    \end{equation}
    which can be reduced further under (\ref{eqn4.129}) to,
    \begin{equation}
        I_T(z)\le \exp{}(-(\sigma-1)\beta_1 v^{-1}\delta^{-1}z)~~\forall~~z\ge 0. \label{eqn4.131}
    \end{equation}
    We conclude from (\ref{eqn4.129}) that with $\delta$ small, the propagation speed $v$ must be large. Correspondingly, when $v$ is large, we have that, 
    \begin{equation}
        M_{T\xi} \sim- I_T(\xi)M_T(1-M_T), \label{eqn4.132}
    \end{equation}
    uniformly for $\xi\in \mathbb{R}$, with $\xi= z/v$. Also, via (\ref{eqn4.131}), we have,
    \begin{equation}
        I_T(\xi)\le \exp{}(-(\sigma-1)\beta_1\delta^{-1}\xi)~~\forall~~\xi\ge 0, \label{eqn4.133}
    \end{equation}
    and so,
    \begin{equation}
        \int_{0}^{\xi}{I_T(\xi')}d\xi' \le \frac{\delta}{(\sigma-1)\beta_1}\left(1 - \exp{}(-(\sigma-1)\beta_1\delta^{-1}\xi)\right)~~\forall~~\xi\ge 0. \label{eqn4.134} 
    \end{equation}
    Therefore, $I_T(\xi)$ is improper integrable, and,
    \begin{equation}
        I_{\infty}\equiv \int_{0}^{\infty}{I_T(\xi')}d\xi' \le \frac{\delta}{(\sigma-1)\beta_1}.  \label{eqn4.135}
    \end{equation}An integration of equation (\ref{eqn4.132}) now leads to,
    \begin{equation}
    M_T(\xi) \sim\frac{M_0 ~\exp{}\left(-\int_{0}^{\xi}{I_T(\xi')}d\xi'\right)}{\left((1-M_0) + M_0~\exp{}\left(-\int_{0}^{\xi}{I_T(\xi')}d\xi'\right)\right)}~~\forall~~\xi\in \mathbb{R}, \label{eqn4.136}
    \end{equation}
    with $M_0 = M_T(0)\in (0,1)$. However, we now obtain from (\ref{eqn4.135}) and (\ref{eqn4.136}), that (with $\delta\le \text{min}(1/2,(\sigma-1)\beta_1/2)$),
    \begin{equation}
        M_T(\xi) \to \frac{M_0~\exp{}(-I_{\infty})}{(M_0 + (1-M_0)~\exp{}(-I_{\infty}))}\ge M_0\left(1 - \frac{\delta}{(\sigma-1)\beta_1)}\right)\ge \frac{1}{2}M_0>0 ~~\text{as}~~\xi\to \infty.
    \end{equation}
    However, since $M_T(\xi)$ is a [FPTW], then  $M_T(\xi)\to 0$ as $\xi\to \infty$, and we obtain a contradiction. The proof is complete.

 \section{Derivation of \texorpdfstring{$v^*(\sigma)$}{} as \texorpdfstring{$\sigma\to 0^+$}{} and \texorpdfstring{$\sigma\to1^-$}{}}\label{app_C}
We here consider the problem \eqref{eqn:MVP1}, which we recall as the following boundary value problem, 
\begin{equation}
    \tilde{M}_0'' + v\tilde{M}_0' + F(\tilde{M}_0) = 0,~~z\in \mathbb{R},
\end{equation}
subject to,
\begin{equation}
    \tilde{M}_0(z)\in (0,1)~~\forall~~z\in \mathbb{R},
\end{equation}
\begin{equation}
    \tilde{M}_0(z)\to
    \begin{cases}
        1~~\text{as}~~z\to -\infty,\\
        0~~\text{as}~~z\to \infty,
    \end{cases}
\end{equation}
\begin{equation}
    \tilde{M}_0(0) = (1-\sigma).
\end{equation}

The function $F(X)$ is defined as $F(X)=H_{0}(X)X(1-X)$ where
\begin{equation}
    H_0(X) =
    \begin{cases}
        b(\sigma)(X - (1-\sigma)),~X\in ((1-\sigma),1]\\
        0,~X\in [0,(1-\sigma)].
    \end{cases}
    \label{H0}
\end{equation}

We may simplify this problem by introducing $\tilde{z}=1/\sqrt{b(\sigma)}$, and $\tilde{v}=v\sqrt{b(\sigma)}$. The problem then becomes
\begin{subequations}\label{eqn:RescapeMVp1}

\begin{equation}
    \tilde{M}_0'' + \tilde{v}\tilde{M}_0' + \tilde{F}(\tilde{M}_0) = 0,~~z\in \mathbb{R},
\end{equation}
subject to,
\begin{equation}
    \tilde{M}_0(z)\in (0,1)~~\forall~~z\in \mathbb{R},
\end{equation}
\begin{equation}
    \tilde{M}_0(z)\to
    \begin{cases}
        1~~\text{as}~~z\to -\infty,\\
        0~~\text{as}~~z\to \infty,
    \end{cases}
\end{equation}
\begin{equation}
    \tilde{M}_0(0) = (1-\sigma).
\end{equation}

The function $\tilde{F}(X)$ is defined as $\tilde{F}(X)=\tilde{H}(X)X(1-X)$ where
\begin{equation}
    \tilde{H}(X) =
    \begin{cases}
        (X - (1-\sigma)),~X\in ((1-\sigma),1]\\
        0,~X\in [0,(1-\sigma)].
    \end{cases}
    \label{Htilde}
\end{equation}
\end{subequations}

Our principal objective is to determine asymptotic approximations to the propagation speed $v=v^{*}(\sigma)$ ($\tilde{v}=\tilde{v}^{*}(\sigma)$)
in the  two complementary asymptotic limits $\sigma\rightarrow{0}^{+}$ and $\sigma\rightarrow{1}^{-}$. 
We first examine (\ref{eqn:RescapeMVp1}) in the limit $\sigma\rightarrow{0}^{+}$. This is best achieved in the $(\tilde{M}_0,\tilde{M}'_0)$ phase plane. Thus we set $\alpha=\tilde{M}_0$ and $\beta=\tilde{M}'_0$, and now follow closely the corresponding theory laid out in \cite{TNT}.
The theory in \cite{TNT} guarantees that $\tilde{v}^{*}(\sigma)=o(1)$ as $\sigma\rightarrow 0^{+}$, and 
moreover, that solutions to (\ref{eqn:RescapeMVp1}) are equivalent to phase paths $\beta=\beta(\alpha;\sigma)$ which satisfy the boundary value problem,
\begin{subequations}
       \begin{align} 
		   \frac{d\beta}{d\alpha}&=-\tilde{v}^{*}(\sigma)-\frac{\tilde{F}(\alpha;\sigma)}{\beta},~\alpha\in (1-\sigma,1),\label{eqnc3} \\
      \beta(\alpha;\sigma)&\sim-\lambda_{+}(\tilde{v}^{*}(\sigma))(1-\alpha) 
 ~~\text{as}~~\alpha\rightarrow{1}^{-}, \label{eqnc1}\\ 
        \beta(1-\sigma;\sigma)&=-\tilde{v}^{*}(\sigma)(1-\sigma), \label{eqnc2}
		\end{align}
\end{subequations}
where 
\begin{equation}
  \lambda_{+}(\tilde{v}^{*}(\sigma)) = \frac{1}{2}(-\tilde{v}^*(\sigma) + \sqrt{\tilde{v}^*(\sigma)^2 + |\tilde{F}'(1;\sigma)|}).  
\end{equation}
The first boundary condition  follows from the local (linear) structure of the unstable manifold of the saddle equilibrium point at $\mathbf{e}_1 =(1,0)$
with $\lambda_{+}(\tilde{v}^{*}(\sigma))$ corresponding to the positive eigenvalue.
The second  boundary condition  follows from the requirement to connect $\mathbf{e}_1$ with the single (one-dimensional) stable manifold associated with the non-hyperbolic equilibrium point $\mathbf{e}(0)=(0,0)$.
Next, in the limit $\sigma\to 0^+$ we have $|\tilde{F}'(1;\sigma)|=\sigma=O(\sigma)$. Thus, in this limit $\lambda_{+}(\tilde{v}^{*}(\sigma))=o(1)$ which, when combined with two the boundary conditions, implies that $\beta=o(1)$
as $\sigma\to 0^+$. 
There are two possible asymptotic behaviours for $\lambda_{+}(\tilde{v}^{*}(\sigma))$ as 
$\sigma\to 0^+$, dependent on the magnitude of $\tilde{v}^{*}(\sigma)$ relative to 
$|\tilde{F}'(1;\sigma)|$. However, after considering both boundary conditions, we deduce that 
$\lambda_{+}(\tilde{v}^{*}(\sigma))=O(\sigma^{\frac{1}{2}})$ as $\sigma\to 0^+$ is the only possible behaviour, resulting in a nontrivial asymptotic balance in the ODE for $\beta$.
In this case, using \eqref{eqnc1}, $\beta=O(\sigma^{3/2})$ as $\sigma\to 0^+$
while from  \eqref{eqnc2}, $\beta=O(\tilde{v}^*(\sigma))$. We thus conclude that
$\tilde{v}^*(\sigma)=O(\sigma^{3/2})$ as $\sigma\to 0^+$. This leads to the following rescalings,
\begin{equation}
    \beta=\sigma^{\frac{3}{2}}{Y},\ \
    \alpha=1-\sigma{X},\ \
    \tilde{v}^{*}(\sigma)=\sigma^{\frac{3}{2}}V(\sigma).
\end{equation}	
The boundary value problem \eqref{eqnc3}-\eqref{eqnc2} then becomes
\begin{subequations}
    \begin{align}
        \label{eqn:PhasePathSystemRescale}
    \frac{dY}{dX}&=\sigma V(\sigma)+\frac{(1-\sigma X)X(1-X)}{Y},
	~~X\in (0,1), 
	\\
        Y(X;\sigma)&\sim-\sigma^{-\frac{1}{2}}\lambda_{+}(\sigma^{\frac{3}{2}}V(\sigma))X~~\text{as}~~ X\rightarrow{0}^{+},\\
         Y(1;\sigma)&=-V(\sigma)(1-\sigma).
		 \end{align}
\end{subequations} 
We now  expand $Y(X;\sigma)$ and $V(\sigma)$ 
according to $Y=Y_{0}(X)+O(\sigma^{\frac{1}{2}})$, and $V(\sigma)=V_{0}+O(\sigma^{\frac{1}{2}})$ as $\sigma\to 0^+$, with $Y_0,V_0=O(1)$. At leading order we obtain the following boundary value problem for 
$Y_0(X)$, namely, 
\begin{subequations}\label{eqn:FOP}
\begin{align}
    \frac{dY_{0}}{dX}&=\frac{(1-X)X}{Y_{0}},~~X\in(0,1),\label{eqn:FO1} \\
    Y_{0}(X)&\sim- X~~\text{as}~~X\rightarrow{0}^{+}, \label{eqn:FO2} \\
    Y_{0}(1)&=-V_{0}.\label{eqn:FO3}
	\end{align}
\end{subequations}
The general solution to \eqref{eqn:FOP} is $Y_{0}(X)^2=c_{1}+ X^2\left(1-2X/3\right)$ where $c_1$ is an arbitrary constant of integration. Upon application of the boundary condition \eqref{eqn:FO2} we obtain that $c_1=0$. Then, 
\begin{equation}
	Y_{0}(X)=-X\left((1-2 X/3)\right)^{\frac{1}{2}},~~X\in[0,1].
\end{equation}	
We now use the boundary condition \eqref{eqn:FO3} to deduce that 
\begin{equation}
    V_{0}=\sqrt{1/3}. 
\end{equation}
Therefore, at leading order, the speed of propagation is given by
\begin{equation}\label{eqn:AsymptoticSpeedSmallSigma} 
	v^{*}(\sigma)=(\sqrt{b(\sigma)/3})\sigma^{3/2}+O(b(\sigma)^{1/2}\sigma^{2})~~\text{as}~~\sigma\rightarrow{0}^{+}.
\end{equation}

We next examine (\ref{eqn:RescapeMVp1}) in the limit $\sigma\rightarrow{}1^{-}$. We let $\eta=1-\sigma$ so that the problem becomes
\begin{subequations}
    \begin{equation}
    \label{eqn:MEqn2}
    \tilde{M}''+\tilde{v}^{*}(\eta)\tilde{M}'+\tilde{F}(\tilde{M};1-\eta)=0,
\end{equation}
\begin{equation}\label{eqn:MEqnBC}
     \tilde{M}(\tilde{z})\rightarrow\begin{cases}
        1~~\text{as}~~ \tilde{z}\rightarrow\infty,\\ 
                0~~\text{as}~~\tilde{z}\rightarrow\infty.
    \end{cases}
\end{equation}
\end{subequations}
We again consider the problem in the phase plane. Similarly to before, we obtain:
\begin{subequations}\label{eqn:SCOSystem1}
       \begin{align} 
		   \frac{d\beta}{d\alpha}&=-\tilde{v}^{*}(\eta)-\frac{\alpha(\alpha-\eta)(1-\alpha)}{\beta}, \label{eqn:SCOPhasePathSystem1} \\
      \beta(1;\eta)&=0,  \label{eqn:SCOPhasePathSystem2} \\ 
        \beta(\eta;\eta)&=-\tilde{v}^{*}(\eta)\eta\label{eqn:SCOPhasePathSystem3}.
		\end{align}
\end{subequations}
We now examine this boundary value problem to determine the asymptotic structure of $v^{*}(\eta)$ ($\tilde{v}^{*}(\eta)$) as $\eta\rightarrow{0}^{+}$. It follows from Theorem 4.2 that the speed $\tilde{v}^{*}(\eta)$ of the PTW solution  may be written as
\begin{equation}
    \tilde{v}^{*}(\eta)=\frac{1}{\sqrt{2}}-\tilde{v}(\eta),
\end{equation}
where
$\tilde{v}(\eta)=o(1)\text{ as }\eta\rightarrow{0}.$

To find the first order approximation we consider the $\eta=0$ problem, 
\begin{subequations}\label{eqn:0SCOSystem1}
       \begin{align} 
		   \frac{d\beta}{d\alpha}&=-\frac{1}{\sqrt{2}}-\frac{\alpha^2(1-\alpha)}{\beta}, \label{eqn:0SCOPhasePathSystem1_0} \\
      \beta(1;0)&=0,  \label{eqn:0SCOPhasePathSystem2_0} \\ 
        \beta(0;0)&=0\label{eqn:0SCOPhasePathSystem3_0},
		\end{align}
\end{subequations}
for which the solution is $\beta_{0}=-\frac{1}{\sqrt{2}}\alpha(1-\alpha)$. This solution satisfies \eqref{eqn:SCOPhasePathSystem2} up to order $\eta$ as $\eta\rightarrow{0}^{+}$. It then follows from \eqref{eqn:SCOPhasePathSystem3} that $\tilde{v}(\eta)$ can be written as
\begin{equation}\label{eqn:tildev}
    \tilde{v}(\eta)=\eta{}\tilde{v}_{1}+o(\eta)\text{ as }\eta\rightarrow{0}^{+},
\end{equation}
for a constant $\tilde{v}_1$.
We can similarly expand $\beta$ as:
\begin{equation}\label{eqn:beta_expansion}
    \beta=\beta_0+\eta\beta_1+o(\eta)\text{ as }\eta\rightarrow{0}^{+}.
\end{equation}
Substituting \eqref{eqn:tildev} and \eqref{eqn:beta_expansion} into \eqref{eqn:0SCOSystem1} we obtain, as $\eta\rightarrow{0}^{+},$
\begin{align*}
    \frac{d\beta_0}{d\alpha}+\eta\frac{d\beta_1}{d\alpha}&=-\frac{1}{\sqrt{2}}+\eta{}\tilde{v}_1-\frac{\alpha^2(1-\alpha)}{\beta_0+\eta\beta_1}+\eta\frac{\alpha(1-\alpha)}{\beta_0+\eta\beta_1}+o(\eta)\\
    &=-\frac{1}{\sqrt{2}}+\eta{}\tilde{v}_1-\frac{\alpha^2(1-\alpha)}{\beta_0}+\eta\frac{\alpha^2(1-\alpha)\beta_1}{\beta_0^2}+\eta\frac{\alpha(1-\alpha)}{\beta_0}+o(\eta).
\end{align*}
The equation for $\beta_1$ is therefore
\begin{subequations}\label{eqn:beta_1_eqn}
       \begin{align} 
		   \frac{d\beta_1}{d\alpha}&=\tilde{v}_1-\sqrt{2}+\frac{2\beta_{1}}{(1-\alpha)}, \label{eqn:0SCOPhasePathSystem1_1} \\
      \beta_{1}(1)&=0,  \label{eqn:BCBeta1_1} \\ 
    \eta\beta_1(\eta)&=O(\eta^2)\label{eqn:BCBeta2_2}\text{ as }\eta\rightarrow{0}^{+}.
		\end{align}
\end{subequations}The solution of \eqref{eqn:0SCOPhasePathSystem1_1} which satisfies {\eqref{eqn:BCBeta1_1}} is $\beta_1=\frac{1}{3}(\sqrt{2}-v_1)(1-\alpha)$. 

We anticipate a boundary layer {over which condition (\ref{eqn:BCBeta2_2}) is satisfied} at $\alpha=\eta$, and notice that \eqref{eqn:BCBeta2_2} implies $\beta=O(\eta)$ and $\alpha=O(\eta)$ at this boundary as $\eta\rightarrow{0}^{+}$. We therefore introduce the scaled variables
\[\tilde{\beta}=\eta^{-1}\beta,A=\eta^{-1}\alpha,\]
where we may expand $\tilde{\beta}=\tilde{\beta}_0+o(\eta)$. We obtain the problem in terms of inner variables:
\begin{subequations}\label{eqn:beta_1_eqn_in}
       \begin{align} 
	\frac{d\tilde{\beta}}{dA}&=-\tilde{v}^{*}(\eta)-\eta\frac{A(A-1)(1-\eta{A})}{\tilde{\beta}}, \label{eqn:0SCOPhasePathSystem1} \\
      \tilde{\beta}(1;\eta)&=-\tilde{v}^{*}(\eta),  \label{eqn:BCBeta1_1_in} 
      \end{align}
\end{subequations}
The first order problem is is trivial and solved by {$\tilde{\beta}_0=-\frac{1}{\sqrt{2}}A$}. An application of Van Dyke's matching principle leads us to conclude $\tilde{v}_1=\sqrt{2}$. Consequently we may express the speed $v^{*}(\sigma)$ in terms of $\sigma$ as
\begin{equation}\label{eqn:AsymptoticSpeedLargeSigma}
    {v}^{*}(\sigma)=b(\sigma)^\frac{1}{2}\left(\frac{1}{\sqrt{2}}-\sqrt{2}(1-\sigma)\right)+O(b(\sigma)^{1/2}(1-\sigma)^2)\text{ as }\sigma\rightarrow{1}^{-}.
\end{equation}

     \end{appendix}

\newpage

\end{document}